\DeclareMathAlphabet{\mathpzc}{OT1}{pzc}{m}{it}
\def\cD{\mathscr{D}}
\def\cF{\mathscr{F}}
\def\cG{\mathscr{G}}
\def\cL{\mathscr{L}}
\def\cM{\mathscr{M}}
\def\cW{\mathscr{W}}
\def\add{\operatorname{add}}
\def\adots{\mathinner{\mkern1mu\raise1.0pt\vbox{\kern7.0pt\hbox{.}}\mkern2mu\raise4.0pt\hbox{.}\mkern2mu\raise7.0pt\hbox{.}\mkern1mu}}
\def\ast{{\textstyle *}}
\def\astsmall{{\scriptstyle *}}
\def\dddots{\mathinner{\mkern1mu\raise10.0pt\vbox{\kern7.0pt\hbox{.}}\mkern2mu\raise5.3pt\hbox{.}\mkern2mu\raise1.0pt\hbox{.}\mkern1mu}}
\def\dddotssmall{\mathinner{\mkern1mu\raise7.0pt\vbox{\kern7.0pt\hbox{.}}\mkern-1mu\raise4pt\hbox{.}\mkern-1mu\raise1.0pt\hbox{.}\mkern1mu}}
\def\Db{\cD^{\operatorname{b}}}
\def\dim{\operatorname{dim}}
\def\dual{\operatorname{D}}
\def\End{\operatorname{End}}
\def\Ext{\operatorname{Ext}}
\def\gldim{\operatorname{gldim}}
\def\H{\operatorname{H}}
\def\Hom{\operatorname{Hom}}
\def\Image{\operatorname{Im}}
\def\Ker{\operatorname{Ker}}
\newcommand\LTensor[1]{\overset{{\rm L}}{\underset{#1}{\otimes}}}
\def\mod{\operatorname{mod}}
\def\Mod{\operatorname{Mod}}
\def\opp{\operatorname{op}}
\def\pd{\operatorname{projdim}}
\def\rad{\operatorname{rad}}
\def\RHom{\operatorname{RHom}}
\def\SL2{\operatorname{SL}_2}
\def\Tor{\operatorname{Tor}}
\numberwithin{equation}{section}
\renewcommand{\labelenumi}{(\roman{enumi})}
\newtheorem{Lemma}{Lemma}[section]
\newtheorem{Proposition}[Lemma]{Proposition}
\theoremstyle{definition}
\newtheorem{Definition}[Lemma]{Definition}
\newtheorem{Setup}[Lemma]{Setup}
\newtheorem{Construction}[Lemma]{Construction}
\newtheorem{Remark}[Lemma]{Remark}
\newtheorem{Example}[Lemma]{Example}
\newtheorem*{bfhpg*}{}
\newenvironment{VarDescription}[1]%
  {\begin{list}{}{%
    \settowidth{\labelwidth}{\textbf{#1:}}%
    \setlength{\leftmargin}{\labelwidth}\addtolength{\leftmargin}{\labelsep}}}%
  {\end{list}}
\newcommand{\sHom}{\underline{\Hom}}
\def\Cok{\operatorname{Cok}}
\begin{document}

\setlength{\parindent}{0pt}
\setlength{\parskip}{7pt}

\title[Wide subcategories of $d$-cluster tilting subcategories]{Wide subcategories of $d$-cluster tilting subcategories}

\author{Martin Herschend}
\address{Department of Mathematics, Uppsala University, Box 480, 751
  06 Uppsala, Sweden}
\email{martin.herschend@math.uu.se}

\author{Peter J\o rgensen}
\address{School of Mathematics and Statistics,
Newcastle University, Newcastle upon Tyne NE1 7RU, United Kingdom}
\email{peter.jorgensen@ncl.ac.uk}
\urladdr{http://www.staff.ncl.ac.uk/peter.jorgensen}

\author{Laertis Vaso}
\address{Department of Mathematics, Uppsala University, Box 480, 751
  06 Uppsala, Sweden}
\email{laertis.vaso@math.uu.se}


\keywords{Algebra epimorphism, $d$-abelian category, $d$-cluster
  tilting subcategory, $d$-homological pair, $d$-pseudoflat morphism,
  functorially finite subcategory, higher homological algebra, wide
  subcategory}  

\subjclass[2010]{16G10, 18A20, 18E10}


\begin{abstract} 

  A subcategory of an abelian category is wide if it is closed under
  sums, summands, kernels, cokernels, and extensions.  Wide
  subcategories provide a significant interface between representation
  theory and combinatorics.

\bigskip
\noindent
  If $\Phi$ is a finite dimensional algebra, then each functorially finite wide subcategory of $\mod( \Phi )$ is of the form $\phi_{ \ast }\big( \mod( \Gamma ) \big)$ in an essentially unique way, where $\Gamma$ is a finite dimensional algebra and $\Phi \stackrel{ \phi }{ \longrightarrow } \Gamma$ is an algebra epimorphism satisfying $\Tor^{ \Phi }_1( \Gamma,\Gamma ) = 0$.

\bigskip
\noindent
  Let $\cF \subseteq \mod( \Phi )$ be a $d$-cluster tilting subcategory
  as defined by Iyama.  Then $\cF$ is a $d$-abelian category as defined by Jasso, and we call a subcategory of $\cF$ wide if it is closed under
  sums, summands, $d$-kernels, $d$-cokernels, and $d$-extensions.  We
   generalise the above description of wide subcategories to this setting: Each
  functorially finite wide subcategory of $\cF$ is of the form
  $\phi_{ \ast }( \cG )$ in an essentially unique way, where
  $\Phi \stackrel{ \phi }{ \longrightarrow } \Gamma$ is an algebra
  epimorphism satisfying $\Tor^{ \Phi }_d( \Gamma,\Gamma ) = 0$, and
  $\cG \subseteq \mod( \Gamma )$ is a $d$-cluster tilting subcategory.

\bigskip
\noindent
  We illustrate the theory by computing the wide subcategories of
  some $d$-cluster tilting subcategories $\cF \subseteq \mod( \Phi )$
  over algebras of the form $\Phi = kA_m / (\rad\,kA_m )^{ \ell }$.

\end{abstract}

\maketitle

\begin{center}
{\em Dedicated to Idun Reiten on the occasion of her 75th birthday}
\end{center}
\bigskip

\section{Introduction}
\label{sec:introduction}

Let $d \geqslant 1$ be an integer.  This paper introduces and studies wide subcategories of $d$-abelian categories as defined by Jasso.  The main examples of $d$-abelian categories are $d$-cluster tilting subcategories as defined by Iyama.  For $d=1$ these categories are abelian in the classic sense, hence our theory generalises the theory of wide subcategories of abelian categories.

\subsection{Outline}
Let $\cM$ be an abelian category.  A full subcategory $\cW \subseteq \cM$ is called {\em wide} if it is closed under sums, summands, kernels, cokernels, and extensions.  Wide subcategories have been studied by a number of authors because of their combinatorial and geometrical significance, see \cite{Br}, \cite{GP}, \cite{IT}, \cite{Kr}, \cite{MS}, \cite{Sc}.  

Now let $\cM$ be a $d$-abelian category in the sense of Jasso, see Definition \ref{def:d-abelian}.  While $\cM$ does not have kernels and cokernels, it does have $d$-kernels and $d$-cokernels; these are complexes of $d$ objects with a weaker universal property than kernels and cokernels.   The main examples of $d$-abelian categories are $d$-cluster tilting subcategories in the sense of Iyama, see Definition \ref{def:d-cluster_tilting}. There is an extensive theory of $d$-cluster tilting subcategories, including a wealth of examples linked to combinatorics and geometry, see \cite{DI}, \cite{HI1}, \cite{HI2}, \cite{I1}, \cite{I3}, \cite{IO}.

We say that a full subcategory $\cW \subseteq \cM$ is {\em wide} if it is closed under sums, summands, $d$-kernels, $d$-cokernels, and $d$-extensions, see Definition \ref{def:wide}.  This paper studies wide subcategories of $d$-abelian categories.  Theorems A and B show that wide subcategories of $d$-cluster tilting subcategories are intimately related to algebra homomorphisms and certain homologically well behaved objects.  Theorem C verifies the combinatorial significance of wide subcategories by giving a combinatorial description of the wide subcategories of some fundamental $d$-cluster tilting subcategories.

\subsection{Classic background: Wide subcategories of module categories}
Algebra homomorphisms are an important theoretical tool for studying wide subcategories of module categories.  Let $k$ be an algebraically closed field.  If $\Phi \stackrel{ \phi }{ \longrightarrow } \Gamma$ is a homomorphism of finite dimensional $k$-algebras, then there is a functor
$\mod( \Gamma ) \stackrel{ \phi_{ \astsmall } }{ \longrightarrow }
\mod( \Phi )$ between the categories of finitely generated right
modules, given by restriction of scalars from $\Gamma$ to $\Phi$.
The essential image $\phi_{ \ast }\big( \mod( \Gamma ) \big)$ is a full subcategory of $\mod( \Phi )$, and it turns out that each functorially finite wide subcategory has this form.

{\bf Classic Theorem. }
{\em
Let $\Phi$ be a finite dimensional $k$-algebra.  There is a bijection 
\[
  \left\{\!\!
    \begin{array}{ll}
      \mbox{ equivalence classes of pseudoflat epimorphisms } \\
      \mbox{ of finite dimensional $k$-algebras $\Phi \stackrel{ \phi }{ \longrightarrow } \Gamma$ }\\
    \end{array}
  \!\right\}
  \rightarrow
  \left\{\!\!
    \begin{array}{ll}
      \mbox{ functorially finite wide } \\
      \mbox{ subcategories of $\mod( \Phi )$ }
    \end{array}
  \!\!\right\},
\]
sending the equivalence class of
$\Phi \stackrel{ \phi }{ \longrightarrow } \Gamma$ to
$\phi_{ \ast }\big( \mod( \Gamma ) \big)$.
\hfill $\Box$
}

Here, an epimorphism of finite dimensional $k$-algebras
$\Phi \stackrel{ \phi }{ \longrightarrow } \Gamma$ is an algebra
homomorphism which is an epimorphism in the category of rings.  It is
pseudoflat if $\Tor^{ \Phi }_1( \Gamma,\Gamma ) = 0$; this notion is due to \cite{BD}.  Equivalence of epimorphisms is defined straightforwardly, and we refer to Definition \ref{def:env} for the notion of functorial finiteness.

The Classic Theorem is due to the efforts of several authors.  It can be obtained by combining Iyama's result \cite[thm.\ 1.6.1(2)]{I4} with Schofield's \cite[Theorem 4.8]{Sc}.  It is stated explicitly by Marks and \v{S}\v{t}ov\'{\i}\v{c}ek in \cite[prop.\ 4.1]{MS}.  There are important earlier results by 
Gabriel and de la Pe{\~n}a, who worked in the category $\Mod( \Phi )$ of all right modules and considered subcategories closed under products, coproducts, kernels, and cokernels, see \cite{GP}.  There are related results by Geigle and Lenzing, see \cite{GL}.

\subsection{This paper: Wide subcategories of $d$-cluster tilting subcategories}
Let $\cF \subseteq \mod( \Phi )$ be a $d$-cluster tilting subcategory as defined by Iyama, see Definition \ref{def:d-cluster_tilting}, where $\Phi$ is a finite dimensional $k$-algebra.  Then \cite[thm.\ 3.16]{J} implies that $\cF$ is $d$-abelian in the sense of Jasso, see Definition \ref{def:d-abelian}.  
We say that $( \Phi,\cF )$ is a {\em $d$-homological pair}, and view $\cF$ as a ``higher'' analogue of $\mod( \Phi )$.  Indeed, if $d=1$, then $\cF = \mod( \Phi )$ is the unique choice.

Our first main result is the following generalisation of the above Classic Theorem, which appears as the case $d = 1$.

{\bf Theorem A. }
{\em
Let $( \Phi,\cF )$ be a $d$-homological pair.  There is a bijection
\[
  \left\{\!\!
    \begin{array}{ll}
      \mbox{ equivalence classes of $d$-pseudoflat epimorphisms } \\
      \mbox{ of $d$-homological pairs $( \Phi,\cF ) \stackrel{ \phi }{ \longrightarrow } ( \Gamma,\cG )$ }
    \end{array}
  \!\right\}
  \rightarrow
  \left\{\!\!
    \begin{array}{ll}
      \mbox{ functorially finite wide } \\
      \mbox{ subcategories of $\cF$ }
    \end{array}
  \!\!\right\},
\]
sending the equivalence class of
$( \Phi,\cF ) \stackrel{ \phi }{ \longrightarrow } ( \Gamma,\cG )$ to
$\phi_{ \ast }( \cG )$.
\hfill $\Box$
}

Here, an epimorphism of $d$-homological pairs
$( \Phi,\cF ) \stackrel{ \phi }{ \longrightarrow } ( \Gamma,\cG )$ is
an epimorphism of algebras
$\Phi \stackrel{ \phi }{ \longrightarrow } \Gamma$ such that
$\phi_{ \ast }( \cG ) \subseteq \cF$.  It is $d$-pseudoflat if
$\Tor^{ \Phi }_d ( \Gamma,\Gamma ) = 0$.  Equivalence of epimorphisms
is again defined straightforwardly.  See Definition
\ref{def:morphisms_of_d-homological_pairs}.

The notion of a morphism $d$-homological pairs appears to be natural.  For instance, a morphism gives three functors which form two adjoint pairs between $\cF$ and $\cG$, see Proposition \ref{pro:adjoints}.  This may be useful outside the theory of wide subcategories.

The methods which lead to Theorem A also give the following result which applies to a more special setup, but is well suited to computations.

{\bf Theorem B. }
{\em
Let $( \Phi,\cF )$ be a $d$-homological pair and $\cW \subseteq \cF$ a full
subcategory closed under isomorphisms in $\cF$.  Let $s \in \cW$ be a module.  

Set $\Gamma = \End_{\Phi}(s)$ so $s$ acquires the structure ${}_{ \Gamma }s_{ \Phi }$.  Assume the following: 
\begin{enumerate}
\setlength\itemsep{4pt}

  \item  The projective dimension satisfies $\pd( s_{ \Phi } ) < \infty$.

  \item  $\Ext_{\Phi}^{\geqslant 1}(s,s) = 0$.

  \item  Each $w \in \cW$ permits an exact sequence 
$
  0
  \rightarrow p_m
  \rightarrow \cdots
  \rightarrow p_1
  \rightarrow p_0
  \rightarrow w
  \rightarrow 0
$
in $\mod( \Phi )$ with $p_i \in \add( s )$.

  \item $\cG := \Hom_\Phi( s,\cW ) \subseteq \mod( \Gamma )$ is
    $d$-cluster tilting. 

\end{enumerate}
Then $\cW$ is a wide subcategory of $\cF$ and there is an equivalence
of categories
\[
\tag*{$\Box$}
  - \underset{ \Gamma }{ \otimes } s : \cG \rightarrow \cW.
\]
}

The utility of Theorem B will be illustrated by showing a combinatorial description of the wide subcategories of some fundamental $d$-cluster tilting subcategories.  Let $Q$ be the quiver
\[
  m \rightarrow \cdots \rightarrow 2 \rightarrow 1
\]  
with $m \geqslant 3$.  Assume that $d$ and $\ell$ are positive integers
such that $d$ is even and
\[
  \frac{ m-1 }{ \ell } = \frac{ d }{ 2 }.
\]
It is shown in \cite{V} that $\Phi = kQ / ( \rad kQ )^{ \ell }$ has
global dimension $d$ and that $\mod( \Phi )$ has a unique $d$-cluster
tilting subcategory $\cF$ consisting of the sums of projective and
injective mo\-du\-les.  We will establish the following.

{\bf Theorem C. }
{\em
Let $\cW \subseteq \cF$ be an additive subcategory which is not
semisimple. Then $\cW$ is wide if and only if $\cW$ is
${\ell}$-periodic (see Definition \ref{def:l-periodic}).  In
particular, there are exactly $2^{\ell}-{\ell}-1$ non-semisimple wide
subcategories of $\cF$.
\hfill $\Box$
}

Note that we also characterise the semisimple wide subcategories of $\cF$
in Proposition \ref{semisimple}.

The paper is organised as follows: Section \ref{sec:background}
collects a number of definitions which are used throughout.  Section
\ref{sec:Proof_of_Thm_B} proves Theorem B.  Sections \ref{sec:wide}
and \ref{sec:d-homological_pairs} establish some properties of wide
subcategories of $d$-abelian categories and morphisms of
$d$-homological pairs.  Sections \ref{sec:Proof_of_Thm_A} and
\ref{sec:Proof_of_Thm_C} prove Theorems A and C.

\section{Definitions}
\label{sec:background}

This section is a reminder of some fundamental definitions.  They are all well known except Definition \ref{def:wide}.

\begin{Setup}
\label{set:blanket1}
Throughout, $k$ is an algebraically closed field, $\Phi$, $\Gamma$,
$\Lambda$ are finite dimensional $k$-algebras, and $d \geqslant 1$ is
a fixed integer.  Some associated categories are:
\begin{itemize}
\setlength\itemsep{4pt}

  \item  The category of finitely generated right $\Phi$-modules:
    $\mod( \Phi )$.

  \item  The derived category of right $\Phi$-modules: $\cD( \Phi )$.

  \item  The bounded derived category of finitely
generated right $\Phi$-modules: $\Db( \mod\,\Phi )$.

\end{itemize}
Module structures are sometimes shown with subscripts:
${}_{ \Gamma }s_{ \Phi }$ shows that $s$ is a $\Gamma$-$\Phi$-bimodule.  Duality over $k$ is denoted $\dual( - ) = \Hom_k( -,k )$. 
\hfill $\Box$
\end{Setup}

\begin{Definition}
[Monomorphisms and epimorphisms]
Let $\cF$ be a category.
\begin{itemize}
\setlength\itemsep{6pt}

  \item A {\em monomorphism in $\cF$} is a morphism
  $f^0 \rightarrow f^1$ such that each object $f \in \cF$ induces an
  injection $\cF( f,f^0 ) \rightarrow \cF( f,f^1 )$.

  \item An {\em epimorphism in $\cF$} is a morphism
  $f_1 \rightarrow f_0$ such that each object $f \in \cF$ induces an
  injection $\cF( f_0,f ) \rightarrow \cF( f_1,f )$.

\end{itemize}
If $\cW \subseteq \mod( \Phi )$ is a full subcategory, then each injection of modules in $\cW$ is clearly a monomorphism in $\cW$, but there may be more monomorphisms in $\cW$ than injections.  The analogous statement holds for surjections versus epimorphisms. 
\hfill $\Box$
\end{Definition}

\begin{Definition}
[$d$-abelian categories, see {\cite[def.\ 3.1]{J}}]
\label{def:d-abelian}
Let $\cM$ be an additive category.

A {\em $d$-kernel} of a morphism $m_1 \rightarrow m_0$ is a diagram
$m_{ d+1 } \rightarrow \cdots \rightarrow m_1$ such
that
\[
  0
  \rightarrow \cM( m,m_{ d+1 } )
  \rightarrow \cdots
  \rightarrow \cM( m,m_2 )
  \rightarrow \cM( m,m_1 )
  \rightarrow \cM( m,m_0 )
\]
is exact for each $m \in \cM$.  The notion of {\em $d$-cokernel} is
dual.

A {\em $d$-exact sequence} is a diagram $m_{ d+1 } \rightarrow \cdots
\rightarrow m_0$ such that 
\begin{align*}
  m_{ d+1 } \rightarrow \cdots \rightarrow m_1 
    & \mbox{ is a $d$-kernel of $m_1 \rightarrow m_0$ and} \\
  m_d \rightarrow \cdots \rightarrow m_0
    & \mbox{ is a $d$-cokernel of $m_{ d+1 } \rightarrow m_d$. }
\end{align*}

The category $\cM$ is called {\em $d$-abelian} if it satisfies:
\begin{VarDescription}{(A2${}^{\opp}$)\quad}
\setlength\itemsep{4pt}

  \item[(A0)\:]  $\cM$ has split idempotents.

  \item[(A1)\:]  Each morphism in $\cM$ has a $d$-kernel and a $d$-cokernel.

  \item[(A2)\:]  If $m_{ d+1 } \rightarrow m_d$ is a monomorphism which
    has a $d$-cokernel $m_d \rightarrow \cdots \rightarrow m_0$, then
    $m_{ d+1 } \rightarrow \cdots \rightarrow m_0$ is a $d$-exact
    sequence. 

  \item[(A2${}^{\opp}$)\:]  The dual of (A2).

\end{VarDescription}
Conditions (A2) and (A2${}^{\opp}$) can be replaced with:
\begin{VarDescription}{(A2'${}^{\opp}$)\quad}
\setlength\itemsep{4pt}

  \item[(A2')\:] If $m_{ d+1 } \rightarrow m_d$ is a monomorphism, then
  there exists a $d$-exact sequence
  $m_{ d+1 } \rightarrow \cdots \rightarrow m_0$.

  \item[(A2'${}^{\opp}$)\:]  The dual of (A2').

\end{VarDescription}
Note that $1$-kernels, $1$-cokernels, $1$-exact sequences, and
$1$-abelian categories are synonymous with kernels, cokernels, short
exact sequences, and abelian categories. 
\hfill $\Box$
\end{Definition}

\begin{Definition}
[$d$-cluster tilting subcategories, see {\cite[def.\ 1.1]{I1}}]
\label{def:d-cluster_tilting}
A full subcategory $\cF \subseteq \mod( \Phi )$ is called {\em
  $d$-cluster tilting} if it is functorially finite (see Definition
\ref{def:env}) and satisfies
\begin{align*}
  \cF
  & = \{ f \in \mod( \Phi ) \mid
         \Ext_{ \Phi }^1( \cF,f ) = \cdots = \Ext_{ \Phi }^{ d-1 }( \cF,f ) = 0 \} \\
  & = \{ f \in \mod( \Phi ) \mid
         \Ext_{ \Phi }^1( f,\cF ) = \cdots = \Ext_{ \Phi }^{ d-1 }( f,\cF ) = 0 \}.
\end{align*}
Note that if $d = 1$ then $\cF = \mod( \Phi )$.
\hfill $\Box$
\end{Definition}

\begin{Definition}
[$d$-homological pairs]
\label{def:d-homological_pair}
We say that $( \Phi,\cF )$ is a {\em $d$-homological pair} if $\cF \subseteq \mod( \Phi )$ is a $d$-cluster tilting subcategory.
\hfill $\Box$
\end{Definition}

\begin{Remark}
\label{rmk:d-homological_pairs}
If $( \Phi,\cF )$ is a $d$-homological pair, then $\cF$ is a $d$-abelian category by \cite[thm.\ 3.16]{J}.  Moreover, it is clear that $\cF$ is an essentially small $k$-linear $\Hom$-finite category,  and $p = \Phi_{ \Phi }$ is a {\em projective generator}.  This means that $p$ is a categorically projective object (see Definition \ref{def:projectives}) such that each $m \in \cM$ permits an epimorphism $p_0 \rightarrow m$ with $p_0 \in \add( p )$.

Conversely, if $\cM$ is an essentially small $k$-linear $\Hom$-finite $d$-abelian category with a projective generator $p$, then by \cite[thm.\ 3.20]{J} there is a $d$-homological pair $( \Phi,\cF )$ with $\Phi = \End_{ \cM }( p )$ such that $\cM$ is equivalent to $\cF$.
\hfill $\Box$
\end{Remark}

\begin{Definition}
[Additive subcategories]
In an additive category, an {\em additive subcategory} is a full subcategory closed under direct sums, direct summands, and isomorphisms in the ambient category.
\end{Definition}

\begin{Definition}
[Wide subcategories]
\label{def:wide}
An additive subcategory $\cW$ of a $d$-abelian category $\cM$ is
called {\em wide} if it satisfies the following conditions:
\begin{enumerate}
\setlength\itemsep{4pt}

  \item  Each morphism in $\cW$ has a $d$-kernel in $\cM$ which
    consists of objects from $\cW$.

  \item  Each morphism in $\cW$ has a $d$-cokernel in $\cM$ which
    consists of objects from $\cW$.

  \item  Each $d$-exact sequence in $\cM$,
\[
  0
  \rightarrow w'
  \rightarrow m_d
  \rightarrow \cdots
  \rightarrow m_1
  \rightarrow w''
  \rightarrow 0,
\]
with $w',w'' \in \cW$ is Yoneda equivalent to a $d$-exact sequence in
$\cM$, 
\[
  0
  \rightarrow w'
  \rightarrow w_d
  \rightarrow \cdots
  \rightarrow w_1
  \rightarrow w''
  \rightarrow 0,
\]
with $w_i \in \cW$ for each $i$.
\hfill $\Box$

\end{enumerate}
\end{Definition}

\section{Proof of Theorem B}
\label{sec:Proof_of_Thm_B}

This section proves Theorem B.

\begin{Definition}
[Resolutions]
\label{def:resolution}
Let $\cF \subseteq \mod( \Phi )$ be an additive subcategory and
$m \in \mod( \Phi )$ an object.  An {\em augmented left
  $\cF$-resolution} of $m$ is a sequence
\[
  \cdots \rightarrow f_2 \rightarrow f_1 \rightarrow
  f_0 \rightarrow m \rightarrow 0
\]
with $f_i \in \cF$ for each $i$, which becomes exact under
$\Hom_{ \Phi }( f,- )$ for each $f \in \cF$.  Then
\[
  \cdots 
  \rightarrow f_2
  \rightarrow f_1
  \rightarrow f_0
  \rightarrow 0
  \rightarrow \cdots
\]
is called a {\em left $\cF$-resolution} of $m$.
\hfill $\Box$
\end{Definition}

The following lemma is due to Iyama.

\begin{Lemma}
\label{lem:resolution}
Let $( \Phi,\cF )$ be a $d$-homological pair.  Each
$m \in \mod( \Phi )$ has an augmented left $\cF$-resolution of the form
\[
  \cdots \rightarrow 0 \rightarrow f_{ d-1 } \rightarrow f_{ d-2 }
  \rightarrow \cdots \rightarrow f_1 \rightarrow f_0 \rightarrow m \rightarrow 0.
\]
Conversely, an exact sequence of this form with $f_i \in \cF$ for each
$i$ is an augmented left $\cF$-resolution of $m$.
\end{Lemma}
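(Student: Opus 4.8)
The plan is to use the defining property of a $d$-cluster tilting subcategory $\cF$, namely the vanishing $\Ext_{\Phi}^{1}(\cF,f)=\cdots=\Ext_{\Phi}^{d-1}(\cF,f)=0$ for $f\in\cF$ together with functorial finiteness, in order to produce the claimed short resolution. First I would fix $m\in\mod(\Phi)$ and build an augmented left $\cF$-resolution
\[
  \cdots \rightarrow f_2 \rightarrow f_1 \rightarrow f_0 \rightarrow m \rightarrow 0
\]
step by step: since $\cF$ is preenveloping, hence by duality precovering, each $\cF$-precover $f_0\to m$ gives a surjection (because $\mod(\Phi)$ has projective covers lying in $\cF$, as $\Phi\in\cF$), and iterating on successive syzygies yields such a sequence with $f_i\in\cF$ and each $f_{i+1}\to f_i$ an $\cF$-precover of the relevant syzygy image. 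The essential point is then to show this resolution can be truncated after $d-1$ steps, i.e.\ that the image $\Omega$ of $f_{d-1}\to f_{d-2}$ already lies in $\cF$.

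Next I would identify $\Omega$ as a $(d-1)$-st syzygy of $m$ with respect to the $\cF$-resolution and compute $\Ext_{\Phi}^{i}(\cF,\Omega)$ for $1\le i\le d-1$. Dimension shifting along the exact sequences $0\to\Omega_{j+1}\to f_j\to\Omega_j\to 0$ (with $\Omega_0=m$) relates $\Ext_{\Phi}^{i}(\cF,\Omega)$ to $\Ext_{\Phi}^{i+d-1}(\cF,m)$ modulo the vanishing $\Ext_{\Phi}^{\ge 1}(\cF,f_j)=0$ — which holds because $f_j\in\cF$ and the higher self-extensions inside $\cF$ vanish in the relevant degrees. Since $i+d-1$ ranges over $d,\dots,2d-2$, and one uses the second equality in Definition \ref{def:d-cluster_tilting} applied to $f_j$ (all intermediate $\Ext$ groups $\Ext_{\Phi}^{1},\dots,\Ext_{\Phi}^{d-1}$ out of $\cF$ into $\cF$ vanish), the dimension shift collapses cleanly and gives $\Ext_{\Phi}^{1}(\cF,\Omega)=\cdots=\Ext_{\Phi}^{d-1}(\cF,\Omega)=0$. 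By the first characterisation of $\cF$ in Definition \ref{def:d-cluster_tilting} this forces $\Omega\in\cF$, so we may stop the resolution, replacing $f_{d-1}\to f_{d-2}$ by $0\to\Omega\to f_{d-2}$; after relabelling this is the desired form with a single nonzero term in homological degree $d-1$.

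For the converse, given an exact sequence $0\to f_{d-1}\to\cdots\to f_0\to m\to 0$ with all $f_i\in\cF$, I would break it into short exact sequences $0\to\Omega_{j+1}\to f_j\to\Omega_j\to 0$ and apply $\Hom_{\Phi}(f,-)$ for $f\in\cF$. Exactness of the resulting long sequence of $\Hom$'s — which is exactly the condition for the sequence to be an augmented left $\cF$-resolution — follows because the connecting maps land in $\Ext_{\Phi}^{\ge 1}(f,\cF)$ or $\Ext_{\Phi}^{\ge 1}(\cF,f_j)$-type groups that vanish by $d$-cluster tilting, and the top syzygy $\Omega_{d-1}=f_{d-1}$ is itself in $\cF$ so no obstruction survives; one checks the $\Ext$ vanishing degree-by-degree using the same dimension-shift bookkeeping as above.

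**Main obstacle.** The delicate point is the bookkeeping in the dimension-shift argument: one must verify that all the $\Ext$ groups encountered during the shifts genuinely fall in the window $1,\dots,d-1$ where the $d$-cluster tilting vanishing applies, and that the self-$\Ext$ groups $\Ext_{\Phi}^{i}(f_j,f_{j'})$ for objects of $\cF$ do vanish in exactly those degrees — this is where both equalities in Definition \ref{def:d-cluster_tilting} get used, and it is easy to be off by one. A secondary subtlety is ensuring at the outset that $\cF$-precovers of modules are surjective (so that the $f_i$ form an honest exact complex over $m$, not merely a complex becoming exact after $\Hom_{\Phi}(f,-)$); this uses that the projective cover of any module lies in $\cF$ since $\Phi\in\cF$. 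Since the excerpt attributes the lemma to Iyama, I expect the argument in the paper to be a short reference-backed version of precisely this syzygy computation.
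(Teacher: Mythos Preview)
The paper does not prove this lemma; it simply attributes the result to Iyama. Your setup---iterated $\cF$-precovers producing short exact sequences $0\to\Omega_{j+1}\to f_j\to\Omega_j\to 0$ with $\Omega_0=m$, aiming to show $\Omega_{d-1}\in\cF$---is the standard one, and the converse sketch is essentially correct. But the forward-direction dimension shift contains a genuine error. You claim the shift ``relates $\Ext_\Phi^i(\cF,\Omega)$ to $\Ext_\Phi^{i+d-1}(\cF,m)$ modulo the vanishing $\Ext_\Phi^{\ge 1}(\cF,f_j)=0$''. Two problems: first, $d$-cluster tilting only gives $\Ext_\Phi^i(\cF,f_j)=0$ for $1\le i\le d-1$; the groups $\Ext_\Phi^{\ge d}(\cF,f_j)$ are typically nonzero, so the shift isomorphisms break down once the degree leaves the window $[1,d-1]$. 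Second, even granting the shift, there is no reason whatsoever for $\Ext_\Phi^d(\cF,m),\dots,\Ext_\Phi^{2d-2}(\cF,m)$ to vanish for an arbitrary $m\in\mod(\Phi)$. So the argument as written does not conclude.

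The ingredient you set up but then never use in the $\Ext$ computation is the precover property itself. From $0\to\Omega_{j+1}\to f_j\to\Omega_j\to 0$ and $f\in\cF$, the long exact sequence contains
\[
\Hom_\Phi(f,f_j)\longrightarrow\Hom_\Phi(f,\Omega_j)\longrightarrow\Ext_\Phi^1(f,\Omega_{j+1})\longrightarrow\Ext_\Phi^1(f,f_j)=0,
\]
and the first map is surjective precisely because $f_j\to\Omega_j$ is an $\cF$-precover. Hence $\Ext_\Phi^1(f,\Omega_{j+1})=0$ for every $j\ge 0$. Now shift \emph{downward}: for $2\le i\le d-1$,
\[
\Ext_\Phi^i(f,\Omega_{d-1})\;\cong\;\Ext_\Phi^{i-1}(f,\Omega_{d-2})\;\cong\;\cdots\;\cong\;\Ext_\Phi^1(f,\Omega_{d-i})=0,
\]
each step using only $\Ext_\Phi^k(f,f_l)=0$ with $1\le k\le d-1$. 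Together with the case $i=1$ this gives $\Ext_\Phi^i(\cF,\Omega_{d-1})=0$ for $1\le i\le d-1$, whence $\Omega_{d-1}\in\cF$.
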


\begin{Lemma}
\label{lem:homdim}
Let $( \Phi,\cF )$ be a $d$-homological pair, $x$ a finitely generated left $\Phi$-module, $\ell \geqslant 0$ an integer.  Then
\begin{equation}
\label{equ:Tor_assumption}
  \Tor^{ \Phi }_{ >\ell }( \cF,x ) = 0
\end{equation}
if and only if $\pd( {}_{ \Phi }x ) \leqslant \ell$.
\end{Lemma}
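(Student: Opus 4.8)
The plan is to prove both implications by exploiting the fact, provided by Lemma \ref{lem:resolution}, that every $\Phi$-module admits an augmented left $\cF$-resolution concentrated in degrees $0,1,\dots,d-1$. First, the easy direction: if $\pd({}_{\Phi}x) \leqslant \ell$, then $x$ has a finite projective resolution of length $\leqslant \ell$; since $\Tor$ can be computed from this resolution, and projectives are $\Tor$-acyclic against any module, we immediately get $\Tor^{\Phi}_{>\ell}(M,x)=0$ for \emph{every} right $\Phi$-module $M$, in particular for every $f \in \cF$. So this implication needs no $d$-cluster tilting input at all.

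For the converse, assume $\Tor^{\Phi}_{>\ell}(\cF,x)=0$. I would compute $\Tor^{\Phi}_{\ast}(\Phi,x)$ — which of course vanishes in positive degrees — but do so using an $\cF$-resolution of the right module $\Phi$ rather than a projective one. By Lemma \ref{lem:resolution}, $\Phi$ (viewed as a right $\Phi$-module) has an augmented left $\cF$-resolution
\[
  0 \rightarrow f_{d-1} \rightarrow \cdots \rightarrow f_1 \rightarrow f_0 \rightarrow \Phi \rightarrow 0
\]
with all $f_i \in \cF$. The key point is that this is an $\cF$-resolution in the sense of Definition \ref{def:resolution}, i.e.\ it stays exact after applying $\Hom_\Phi(f,-)$ for $f \in \cF$; I need to upgrade this to the statement that $- \otimes_\Phi x$ applied to the deleted complex $f_{d-1} \to \cdots \to f_0$ computes $\Tor^{\Phi}_\ast(\Phi,x)$. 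This follows because each syzygy in the resolution, together with the $\Tor$-vanishing hypothesis $\Tor^{\Phi}_{>\ell}(f_i,x)=0$, lets one run the standard dimension-shifting/long-exact-sequence argument: splicing the resolution into short exact sequences and chasing $\Tor$ shows that $\Tor^{\Phi}_{n}(\Phi,x)$ is computed by the homology of $f_\bullet \otimes_\Phi x$ in the relevant range, hence the homology of $f_\bullet \otimes_\Phi x$ vanishes in degrees $> \ell$. But $f_\bullet \to \Phi$ is a resolution of length $\leqslant d-1$, and from the Tor-vanishing one deduces that the complex $f_\bullet \otimes_\Phi x$, which has length $\leqslant d-1$, is in fact exact past degree $\ell$, so one may truncate it: this produces a resolution of $x$ (or rather, feeds into a standard argument) showing that $x$ has finite flat dimension $\leqslant \ell$, hence finite projective dimension $\leqslant \ell$ since $\Phi$ is finite dimensional (finitely generated modules of finite flat dimension over a Noetherian — here Artinian — ring have equal flat and projective dimension).

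The main obstacle I anticipate is the bookkeeping in the dimension-shifting step: one must be careful that an $\cF$-resolution, defined via $\Hom$-exactness, genuinely behaves like a flat-type resolution for the purpose of computing $\Tor^{\Phi}(\Phi,x)$ — this is not automatic from the definition and must be extracted from the $\Tor$-vanishing hypothesis applied term by term, together with the fact that the resolution has bounded length $d-1$. Concretely, I would break $f_{d-1}\to\cdots\to f_0\to\Phi\to 0$ into short exact sequences $0\to K_{i+1}\to f_i\to K_i\to 0$ (with $K_0=\Phi$, $K_d=0$), apply $-\otimes_\Phi x$, and use the hypothesis to propagate vanishing of $\Tor_{>\ell}(K_i,x)$ upward; since $K_d = 0$, after $d$ steps one concludes $\Tor^{\Phi}_{>\ell}(\Phi, x)$ is detected by the truncated complex, forcing $\pd({}_\Phi x)\leqslant \ell$. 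The symmetry of $\Tor$ in its two arguments, plus the passage between right and left modules, should be invoked explicitly but causes no real difficulty since everything is finitely generated over a finite dimensional algebra.
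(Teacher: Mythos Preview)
There is a genuine gap in the converse direction, and it occurs at two levels.

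First, the choice of the module $\Phi$ makes the argument vacuous: $\Phi_\Phi$ is projective, hence already lies in $\cF$, so its augmented left $\cF$-resolution from Lemma~\ref{lem:resolution} is just $0\to\Phi\to\Phi\to 0$, and your complex $f_\bullet$ collapses to a single term.  Moreover, $\Tor^{\Phi}_{>0}(\Phi,x)=0$ holds for \emph{every} $x$ and says nothing about $\pd({}_\Phi x)$.  What you actually need is $\Tor^{\Phi}_{>\ell}(z,x)=0$ for \emph{every} finitely generated right module $z$ (equivalently, for every simple $z$), and this is what the paper proves.

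Second, even if you replace $\Phi$ by an arbitrary $z$ and take a left $\cF$-resolution $0\to f_{d-1}\to\cdots\to f_0\to z\to 0$, the dimension-shifting you describe does not close.  From the short exact sequences $0\to K_{i+1}\to f_i\to K_i\to 0$ and the hypothesis $\Tor^\Phi_{>\ell}(f_i,x)=0$, the long exact $\Tor$ sequence only gives you an injection $\Tor^\Phi_n(K_i,x)\hookrightarrow\Tor^\Phi_{n-1}(K_{i+1},x)$ for $n>\ell$; iterating lands you in $\Tor^\Phi_{n-d}(K_d,x)$, but by then $n-d$ may be $\leqslant\ell$ and you cannot conclude vanishing.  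A spectral-sequence count confirms this: the resolution sits in homological degrees $0,\dots,d-1$, so you only obtain $\Tor^\Phi_{>\ell+d-1}(z,x)=0$, off by $d-1$.  (Also note that $f_\bullet\otimes_\Phi x$ is a complex of $k$-vector spaces, not of left $\Phi$-modules, so it cannot ``produce a resolution of $x$'' in any useful sense.)

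The fix, and this is exactly what the paper does, is to use the \emph{dual} of Lemma~\ref{lem:resolution}: take a right $\cF$-coresolution $0\to z\to f^0\to\cdots\to f^{d-1}\to 0$ of an arbitrary $z$.  Now the complex $f^\bullet$ representing $z$ in $\Db(\mod\,\Phi)$ sits in \emph{non-positive} homological degrees, so the truncation triangles (or the analogous dimension shift, which now goes \emph{up} in $\Tor$ degree) let you conclude $\Tor^\Phi_{>\ell}(z,x)=0$ directly, without any loss of degrees.
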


\begin{proof}
It is clear that $\pd( {}_{ \Phi }x ) \leqslant \ell$ implies
\eqref{equ:Tor_assumption}.  

Conversely, assume \eqref{equ:Tor_assumption}.  By the dual of Lemma
\ref{lem:resolution} each $z \in \mod( \Phi )$ permits an exact sequence
$0 \rightarrow z \rightarrow f^0 \rightarrow \cdots \rightarrow f^{
  d-1 } \rightarrow 0$ with $f^i \in \cF$.  The complex
\[
  f
  = \cdots
  \rightarrow 0
  \rightarrow f^0
  \rightarrow \cdots
  \rightarrow f^{ d-1 }
  \rightarrow 0
  \rightarrow \cdots
\]
is isomorphic to $z$ in $\Db( \mod\,\Phi )$ and we must prove $\Tor^{ \Phi }_{ >\ell }( f,x ) = 0$.   

To show this, let $j$ be an integer with $0 \leqslant j \leqslant d-1$
and consider a complex
\[
  e
  = \cdots
  \rightarrow 0
  \rightarrow e^j
  \rightarrow \cdots
  \rightarrow e^{ d-1 }
  \rightarrow 0
  \rightarrow \cdots
\]
with $e^i \in \cF$ for each $i$.  We prove by descending induction on
$j$ that $\Tor^{ \Phi }_{ >\ell }( e,x ) = 0$.  If $j = d-1$ then $e$
is concentrated in cohomological degree $j = d-1$.  This means that
$e = \Sigma^{ -j }e^j$ so $\Tor^{ \Phi }_{ >\ell }( e,x )$ is
isomorphic to
\begin{equation}
\label{equ:Tor_vanishing}  
  \Tor^{ \Phi }_{ >\ell }( \Sigma^{ -j }e^j,x )
  = \Tor^{ \Phi }_{ >\ell+j }( e^j,x ) \\
  = 0
\end{equation}
where the last equality is by \eqref{equ:Tor_assumption}.  If
$j < d-1$ then hard truncation gives a triangle
$e' \rightarrow e \rightarrow e''$ in $\Db( \mod\,\Phi )$ with
\[
  \begingroup
  \setlength{\arraycolsep}{2pt}
  \begin{array}{ccccccccccccccccc}
    e' & = & \cdots
    & \rightarrow & 0
    & \rightarrow & 0
    & \rightarrow & e^{ j+1 }
    & \rightarrow & \cdots
    & \rightarrow & e^{ d-1 }
    & \rightarrow & 0
    & \rightarrow & \cdots, \\[2mm]
    e'' & = & \cdots
    & \rightarrow & 0
    & \rightarrow & e^j
    & \rightarrow & 0
    & \rightarrow & \cdots
    & \rightarrow & 0
    & \rightarrow & 0
    & \rightarrow & \cdots.
  \end{array}
  \endgroup
\]
This gives a long exact sequence consisting of pieces
\[
  \Tor^{ \Phi }_i( e',x )
  \rightarrow \Tor^{ \Phi }_i( e,x )
  \rightarrow \Tor^{ \Phi }_i( e'',x ).
\]
But $\Tor^{ \Phi }_{ >\ell }( e',x ) = 0$ holds by induction, and
$\Tor^{ \Phi }_{ >\ell }( e'',x )$ is zero because it is isomorphic to
the expression in Equation \eqref{equ:Tor_vanishing} since $e'' =
\Sigma^{ -j }e^j$.  Hence the long exact sequence shows $\Tor^{ \Phi
}_{ >\ell }( e,x ) = 0$ as desired.
\end{proof}

\begin{Remark}
[Induced homomorphisms of $\Ext$ groups]
\label{rmk:Ext_map}
Let $\mod( \Gamma ) \stackrel{ G }{ \rightarrow } \mod( \Phi )$ be an
exact functor, $n',n'' \in \mod( \Gamma )$ modules, $i \geqslant 1$
an integer.
\begin{enumerate}
\setlength\itemsep{4pt}

  \item  There is an induced homomorphism of Yoneda $\Ext$ groups,
\begin{equation}
\label{equ:Ext_map}
  \xymatrix {
    \Ext_{ \Gamma }^i( n'',n' ) \ar[rr]^-{ G( - ) }
    & & \Ext_{ \Phi }^i( Gn'',Gn' ),
            }
\end{equation}
given by
\begin{align*}
  & G \big( [ 0 \rightarrow n' \rightarrow e_i \rightarrow \cdots
    \rightarrow e_1 \rightarrow n'' \rightarrow 0 ] \big) \\
  & \;\;\;\;\;\;\;\;\;\;\;\;\;\;\;\;\;\;\;\;\;\;\;\;\;\;\;\;\;\;\;\;\;\; = [ 0 \rightarrow Gn' \rightarrow Ge_i \rightarrow \cdots
    \rightarrow Ge_1 \rightarrow Gn'' \rightarrow 0 ],
\end{align*}
where square brackets denote the class of an extension in the Yoneda
$\Ext$ group.

  \item  There is an induced homomorphism of $\Hom$ spaces in the
    derived categories,
\begin{equation}
\label{equ:HomD_map}
  \xymatrix {
    \Hom_{ \Db( \mod\,\Gamma ) }( n'',\Sigma^i n' ) \ar[rr]^-{ G( - ) }
    & & \Hom_{ \Db( \mod\,\Phi ) }( Gn'',\Sigma^i Gn' ),
            }
\end{equation}
obtained from
\[
  \xymatrix {
    \Db( \mod\,\Gamma ) \ar[rr]^-{ G( - ) }
    & & \Db( \mod\,\Phi ),
            }
\]
the canonical extension of the exact functor $G$ to a triangulated
functor. 

\end{enumerate}
It is well known that under the canonical identification of Yoneda $\Ext$ groups with $\Hom$ spaces in the derived categories, the homomorphisms \eqref{equ:Ext_map} and \eqref{equ:HomD_map} are identified.  This implies the following lemma.
\hfill $\Box$ 
\end{Remark}

\begin{Lemma}
\label{lem:Ext_map}
If the homomorphism \eqref{equ:HomD_map} of $\Hom$ spaces in the
derived categories is bijective, then so is the homomorphism
\eqref{equ:Ext_map} of Yoneda $\Ext$ groups.
\end{Lemma}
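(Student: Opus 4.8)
The plan is to deduce Lemma \ref{lem:Ext_map} directly from the compatibility recorded in Remark \ref{rmk:Ext_map}. The key observation is that the statement compares two maps, \eqref{equ:Ext_map} and \eqref{equ:HomD_map}, which by the last sentence of the remark are \emph{the same map} once we make the canonical identifications
\[
  \Ext_{ \Gamma }^i( n'',n' ) \;\cong\; \Hom_{ \Db( \mod\,\Gamma ) }( n'',\Sigma^i n' ),
  \qquad
  \Ext_{ \Phi }^i( Gn'',Gn' ) \;\cong\; \Hom_{ \Db( \mod\,\Phi ) }( Gn'',\Sigma^i Gn' ).
\]
So the proof amounts to little more than transporting bijectivity along these two isomorphisms: if \eqref{equ:HomD_map} is bijective, then its identification with \eqref{equ:Ext_map} forces the latter to be bijective as well.

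Concretely, first I would recall (or cite the standard fact) that for $i \geqslant 1$ the Yoneda $\Ext$ group $\Ext_{ \Lambda }^i( X,Y )$ of modules over a finite dimensional algebra $\Lambda$ is canonically isomorphic to $\Hom_{ \Db( \mod\,\Lambda ) }( X,\Sigma^i Y )$, the isomorphism sending the class of an $i$-extension $[ 0 \to Y \to e_i \to \cdots \to e_1 \to X \to 0 ]$ to the corresponding morphism in the derived category obtained by splicing; this identification is natural in both variables. Applying this with $\Lambda = \Gamma$ to the pair $(n'', n')$ and with $\Lambda = \Phi$ to the pair $(Gn'', Gn')$ gives the two vertical isomorphisms. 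Then I would invoke the last sentence of Remark \ref{rmk:Ext_map}, namely that the square
\[
\vcenter{
  \xymatrix {
    \Ext_{ \Gamma }^i( n'',n' ) \ar[r]^-{ G( - ) } \ar[d]^-{ \cong }
      & \Ext_{ \Phi }^i( Gn'',Gn' ) \ar[d]^-{ \cong } \\
    \Hom_{ \Db( \mod\,\Gamma ) }( n'',\Sigma^i n' ) \ar[r]^-{ G( - ) }
      & \Hom_{ \Db( \mod\,\Phi ) }( Gn'',\Sigma^i Gn' )
            }
        }
\]
commutes. Since the vertical maps are bijections and, by hypothesis, the bottom horizontal map is a bijection, the top horizontal map is a bijection too, which is exactly the conclusion.

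There is essentially no obstacle here: the content of the lemma is entirely contained in Remark \ref{rmk:Ext_map}, and the proof is a one-line diagram chase. The only thing one should be slightly careful about is that the derived-category $\Hom$ in degree $i \geqslant 1$ is being identified with \emph{Yoneda} $\Ext$ (as opposed to the $\Ext$ computed by a projective resolution, though of course these agree), and that the identification is functorial enough that applying the exact functor $G$ on $i$-extensions corresponds to applying its triangulated extension on the derived-category side — but both of these facts are standard and are exactly what the remark asserts. Thus I would keep the proof to: recall the canonical identification, cite the commutativity from the remark, and conclude that bijectivity of \eqref{equ:HomD_map} transfers to \eqref{equ:Ext_map}.
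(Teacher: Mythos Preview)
Your proposal is correct and matches the paper's approach exactly: the paper does not give a separate proof of this lemma, but simply states that the identification recorded in Remark \ref{rmk:Ext_map} ``implies the following lemma''. Your commutative-square argument is precisely the unpacking of that sentence.
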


\begin{Lemma}
\label{lem:big_embedding}
Let $s \in \mod( \Phi )$ be given.  Set $\Gamma = \End_{ \Phi }( s )$ so $s$ acquires the structure ${}_{ \Gamma }s_{ \Phi }$.  Assume the following:
\begin{enumerate}
\setlength\itemsep{4pt}

  \item  $\pd( s_{ \Phi } ) < \infty$.

  \item  $\Ext_{ \Phi }^{ \geqslant 1 }( s,s ) = 0$.

\end{enumerate}

If $\pd( {}_{ \Gamma }s ) < \infty$, then there is a functor
\begin{equation}
\label{equ:LTensor}
  \xymatrix {
  \Db( \mod\,\Gamma )
    \ar[rrr]^-{ L( - ) \;=\; - \LTensor{ \Gamma } s } & & &
  \Db( \mod\,\Phi )
            }
\end{equation}
which is full and faithful.

If ${}_{ \Gamma }s$ is projective, then the functor
\begin{equation}
\label{equ:otimes}
  \xymatrix {
  \mod( \Gamma )
    \ar[rrr]^-{ G( - ) \;=\; - \underset{ \Gamma }{ \otimes } s } & & &
  \mod( \Phi )
            }
\end{equation}
satisfies:
\renewcommand{\labelenumi}{(\alph{enumi})}
\begin{enumerate}
\setlength\itemsep{4pt}

  \item  $G$ is exact, full, and faithful.

  \item  Let $n',n'' \in \mod( \Gamma )$ and an integer $i \geqslant
    1$ be given.  Then the induced homomorphism of Yoneda $\Ext$ groups
$
  \xymatrix {
    \Ext_{ \Gamma }^i( n'',n' ) \ar[r]^-{ G( - ) }
    & \Ext_{ \Phi }^i( Gn'',Gn' )
            }
$
is bijective (cf.\ Remark \ref{rmk:Ext_map}(i)).

\end{enumerate}
\renewcommand{\labelenumi}{(\roman{enumi})}
\end{Lemma}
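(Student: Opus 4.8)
The plan is to prove the two halves of Lemma~\ref{lem:big_embedding} in order: first the full faithfulness of $L(-) = - \LTensor{\Gamma} s$ on the bounded derived categories, then the consequences for $G(-) = - \otimes_\Gamma s$ when ${}_\Gamma s$ is projective.

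\textbf{Full faithfulness of $L$.} First I would note that $L$ is well defined on $\Db(\mod\,\Gamma)$: assumption (i), $\pd(s_\Phi) < \infty$, ensures that $- \LTensor{\Gamma} s$ sends bounded complexes to bounded complexes, while $\pd({}_\Gamma s) < \infty$ ensures the derived tensor product is computed by a bounded projective resolution over $\Gamma$, so the image lands in $\Db(\mod\,\Phi)$. For full faithfulness I would use the standard d\'evissage argument: $L$ has a right adjoint $\RHom_\Phi(s, -)$, and by the usual triangulated-category reduction it suffices to check that the unit morphism $n \to \RHom_\Phi(s, n \LTensor{\Gamma} s)$ is an isomorphism for $n$ ranging over a generating set of $\Db(\mod\,\Gamma)$ — for instance the free module $\Gamma$ (placed in degree $0$), since the thick subcategory it generates is all of $\Db(\mod\,\Gamma)$ and both $L$ and its right adjoint are exact functors of triangulated categories. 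For $n = \Gamma$ one has $\Gamma \LTensor{\Gamma} s \cong s$ in $\Db(\mod\,\Phi)$, and then $\RHom_\Phi(s,s)$ has cohomology $\Ext_\Phi^{\geqslant 1}(s,s)$ in positive degrees, which vanishes by assumption (ii), and $\Hom_\Phi(s,s) = \Gamma$ by definition of $\Gamma$; so the unit is an isomorphism on $\Gamma$, hence on all of $\Db(\mod\,\Gamma)$. The main technical obstacle here is bookkeeping the boundedness and the identification of the adjunction unit; the homological input is precisely (i) and (ii).

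\textbf{Consequences for $G$ when ${}_\Gamma s$ is projective.} If ${}_\Gamma s$ is projective then $- \otimes_\Gamma s$ is exact, so $G$ agrees with $L$ after composing with the inclusion $\mod(\Gamma) \hookrightarrow \Db(\mod\,\Gamma)$ (no higher Tor terms), and likewise lands in $\mod(\Phi) \subseteq \Db(\mod\,\Phi)$. Exactness of $G$ is then immediate. For (a), fullness and faithfulness of $G$ on $\mod(\Gamma)$ follow from fullness and faithfulness of $L$ on $\Db(\mod\,\Gamma)$: for $n', n'' \in \mod(\Gamma)$ we have $\Hom_\Phi(Gn'', Gn') = \Hom_{\Db(\mod\,\Phi)}(Ln'', Ln') \cong \Hom_{\Db(\mod\,\Gamma)}(n'', n') = \Hom_\Gamma(n'', n')$, the middle isomorphism being $L$ and the outer ones because $Gn = Ln$ is concentrated in degree $0$. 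For (b), I would run the same computation one degree up: by Remark~\ref{rmk:Ext_map} the induced map $G(-)$ on Yoneda $\Ext^i$ is identified with the map \eqref{equ:HomD_map} on $\Hom_{\Db}(-, \Sigma^i -)$, and since $G n = L n$ sits in degree $0$ we have $\Hom_{\Db(\mod\,\Gamma)}(n'', \Sigma^i n') = \Ext_\Gamma^i(n'', n')$ and $\Hom_{\Db(\mod\,\Phi)}(Gn'', \Sigma^i Gn') = \Ext_\Phi^i(Gn'', Gn')$; the full faithfulness of $L$ makes \eqref{equ:HomD_map} bijective, so by Lemma~\ref{lem:Ext_map} the map \eqref{equ:Ext_map} is bijective too, which is exactly statement (b).

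I expect the only real subtlety to be the first part — making the d\'evissage rigorous, in particular verifying that $\Db(\mod\,\Gamma)$ is generated as a thick subcategory by $\Gamma$ and that $L$ genuinely preserves boundedness — while parts (a) and (b) are then formal consequences obtained by evaluating the now-established equivalence-onto-its-image in degrees $0$ and $i$ respectively, together with the $\Ext$/$\Hom_{\Db}$ dictionary of Remark~\ref{rmk:Ext_map} and Lemma~\ref{lem:Ext_map}.
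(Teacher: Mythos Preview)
Your treatment of the second half (parts (a) and (b) when ${}_\Gamma s$ is projective) is fine and matches the paper's: once $L$ is full and faithful and $G$ is its restriction to the heart, exactness, full faithfulness, and the $\Ext$-bijection all follow formally via Remark~\ref{rmk:Ext_map} and Lemma~\ref{lem:Ext_map}.

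The gap is in the first half. Your d\'evissage hinges on the claim that $\Gamma$ generates $\Db(\mod\,\Gamma)$ as a \emph{thick} subcategory, and that claim is false in general: the thick subcategory of $\Db(\mod\,\Gamma)$ generated by $\Gamma$ is exactly the subcategory of perfect complexes, which equals $\Db(\mod\,\Gamma)$ if and only if $\gldim(\Gamma) < \infty$. Nothing in the hypotheses forces $\Gamma = \End_\Phi(s)$ to have finite global dimension, so as written your argument only shows the unit is an isomorphism on perfect complexes. You even flag this step as the subtle one, but it genuinely fails rather than merely needing care.

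There are two natural repairs. One is to pass to the unbounded derived category $\cD(\Mod\,\Gamma)$: there $\Gamma$ is a compact generator, the class of objects on which the unit is an isomorphism is a localizing subcategory (both $-\LTensor{\Gamma}s$ and $\RHom_\Phi(s,-)$ preserve coproducts, the latter because hypothesis~(i) makes $s$ perfect, hence compact, in $\cD(\Phi)$), and then one restricts back to $\Db(\mod\,\Gamma)$. The other is what the paper actually does: it avoids generation arguments entirely and checks the unit directly for every $g \in \Db(\mod\,\Gamma)$ by choosing a (possibly unbounded to the left) projective resolution $r$ of $g$ over $\Gamma$ and a bounded $\Gamma$-$\Phi$-bimodule resolution $q \to s$ that is projective on the $\Phi$-side, then identifying $\eta_g$ with the explicit chain map $r \to \Hom_\Phi(q, r\otimes_\Gamma s)$ and verifying it is a quasi-isomorphism using $\Gamma = \End_\Phi(s)$ and hypothesis~(ii). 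Either route closes the gap; the paper's has the virtue of staying inside bounded and finitely generated objects throughout.
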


\begin{proof}
Observe that $s$ can be viewed as a right $( \Gamma^{ \opp } \underset{ k }{ \otimes } \Phi )$-module.  It has a projective resolution $p$ over $\Gamma^{ \opp } \underset{ k }{ \otimes } \Phi$, and $p$ consists of modules which are projective when viewed as right $\Phi$-modules.  If we set $m = \pd( s_{ \Phi } )$, then the soft truncation
\[
  q
  \;\; = \;\; \cdots
  \rightarrow 0
  \rightarrow \Image \partial^p_m
  \rightarrow p_{ m-1 }
  \rightarrow \cdots
  \rightarrow p_0
  \rightarrow 0
  \rightarrow \cdots
\]
consists of modules which are projective when viewed as right $\Phi$-modules.  We can view $q$ as a complex of $\Gamma$-$\Phi$-bimodules, and there is a quasi-isomorphism $q \stackrel{ \theta }{ \longrightarrow } s$.  When viewed as a complex of right $\Phi$-modules, $q$ is a bounded projective resolution of $s_{ \Phi }$.

Now consider the functor $L$ from Equation \eqref{equ:LTensor}.  It makes sense because ${}_{ \Gamma }s$ is finite dimensional over $k$ and has finite projective dimension, whence $- \LTensor{ \Gamma } s$ maps $\Db( \mod\,\Gamma )$ to $\Db( \mod\,\Phi )$.  Likewise, $s_{ \Phi }$ is finite dimensional over $k$ and has finite projective dimension, whence $\RHom_{ \Phi }( s,- )$ maps $\Db( \mod\,\Phi )$ to $\Db( \mod\,\Gamma )$.  Hence there is an adjoint pair
\[
  \xymatrix {
  \Db( \mod\,\Gamma )
    \ar[rrr]<1ex>^-{ L( - ) \;=\; - \LTensor{ \Gamma } s } & & &
    \Db( \mod\,\Phi ).
    \ar[lll]<1ex>^-{ \RHom_{ \Phi }( s,- ) }
            }
\]
To see that $L$ is full and faithful, it is enough by the dual of \cite[thm.\ 1, p.\ 90]{MacLane} to show that the unit $\eta$ of the adjunction is a natural equivalence.  We do so by computing
\[
  g
  \stackrel{ \eta_g }{ \longrightarrow }
  \RHom_{ \Phi }( s,g \LTensor{ \Gamma } s )
\]
for each $g \in \Db( \mod\,\Gamma )$ and showing that it is an isomorphism. 

We know that $g$ has a projective resolution $r$ which is a right bounded complex consisting of finitely generated projective right $\Gamma$-modules.  There are chain maps
\begin{equation}
\label{equ:eta_theta}
  r
  \stackrel{ \zeta_r }{ \longrightarrow }
  \Hom_{ \Phi }( s,r \underset{ \Gamma }{ \otimes } s )
  \stackrel{ \theta^{ \astsmall } }{ \longrightarrow }
  \Hom_{ \Phi }( q,r \underset{ \Gamma }{ \otimes } s )
\end{equation}
where $\zeta$ is the unit of the $\otimes$--$\Hom$-adjunction.  We claim that the image of $\theta^{ \ast } \circ \zeta_r$ in $\Db( \mod\,\Gamma )$ is isomorphic to $\eta_g$ and that $\theta^{ \ast } \circ \zeta_r$ is a quasi-isomorphism; this implies that $\eta_g$ is an isomorphism as desired.

To see that the image of $\theta^{ \ast } \circ \zeta_r$ in $\Db( \mod\,\Gamma )$ is isomorphic to $\eta_g$, note that $r$ is a projective resolution of $g$ and recall that, when viewed as a complex of right $\Phi$-modules, $q$ is a projective resolution of $s$.  Hence the target of $\theta^{ \ast } \circ \zeta_r$ is $\RHom_{ \Phi }( s,g \LTensor{ \Gamma } s )$.  To see that $\theta^{ \ast } \circ \zeta_r$ is a quasi-isomorphism, note that $\zeta_r$ is an isomorphism because $r$ consists of finitely generated projective right $\Gamma$-modules while we have $\Gamma = \End_{ \Phi }( s )$. Moreover, $r \underset{ \Gamma }{ \otimes } s$ is a right bounded complex consisting of modules in $\add( s_{ \Phi } )$, and condition (ii) in the lemma implies that
\[
  \xymatrix {
  \Hom_{ \Phi }( s,\widetilde{ s } )
    \ar[rrr]^-{ \Hom_{ \Phi }( \theta,\widetilde{ s } ) } & & &
    \Hom_{ \Phi }( q,\widetilde{ s } )
            }
\]
is a quasi-isomorphism for each $\widetilde{ s } \in \add( s_{ \Phi } )$.  But $s$ and $q$ are bounded complexes so it follows from \cite[prop.\ 2.7(b)]{CFH} that 
\[
  \xymatrix {
  \Hom_{ \Phi }( s,r \underset{ \Gamma }{ \otimes } s )
    \ar[rr]^-{ \theta^{ \astsmall } } & &
    \Hom_{ \Phi }( q,r \underset{ \Gamma }{ \otimes } s )
            }
\]
is a quasi-isomorphism.

Next consider the functor $G$ from Equation \eqref{equ:otimes}.  
Since ${}_{ \Gamma }s$ is projective, $G$ is exact.  The canonical extension of $G$ to a triangulated functor is $L$.  Since $L$ is full and faithful, we learn that $G$ is full and faithful, proving (a) in the lemma.  We also learn that the map \eqref{equ:HomD_map} is bijective.  Hence the map \eqref{equ:Ext_map} is bijective by Lemma \ref{lem:Ext_map}, proving (b) in the lemma.
\end{proof}

\begin{Definition}
[Essential images]
\label{def:essim}
If $\cG \stackrel{ G }{ \longrightarrow } \cF$ is a functor then the {\em essential image} is the full subcategory
\[
\tag*{$\Box$}
  G( \cG ) = \{ f \in \cF \,|\, f \cong G( g ) \mbox{ for some $g
    \in \cG$} \}.
\]
\end{Definition}

\begin{Proposition}
\label{pro:Proto_B}
Let $( \Phi,\cF )$ and $( \Gamma,\cG )$ be $d$-homological pairs,
$\mod( \Gamma ) \stackrel{ G }{ \rightarrow } \mod( \Phi )$ a
functor.  Assume the following:
\renewcommand{\labelenumi}{(\alph{enumi})}
\begin{enumerate}
\setlength\itemsep{4pt}

  \item  $G$ is exact and restricts to a full and faithful functor
         $\cG \rightarrow \cF$.  

  \item  Let $g',g'' \in \cG$ be given.  Then the induced homomorphism
$
  \xymatrix {
    \Ext_{ \Gamma }^d( g'',g' ) \ar[r]^-{ G( - ) }
    & \Ext_{ \Phi }^d( Gg'',Gg' )
            }
$
of Yoneda $\Ext$ groups is bijective (cf.\ Remark \ref{rmk:Ext_map}(i)). 

\end{enumerate}
\renewcommand{\labelenumi}{(\roman{enumi})}
Then the essential image $G( \cG )$ is a wide subcategory of $\cF$.  
\end{Proposition}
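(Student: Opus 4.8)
\textbf{Proof proposal for Proposition \ref{pro:Proto_B}.}

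The plan is to verify the three defining conditions of a wide subcategory (Definition \ref{def:wide}) for $\cW := G(\cG) \subseteq \cF$, using that $G$ is exact, restricts to a fully faithful functor $\cG \to \cF$, and induces a bijection on $\Ext^d$ between objects of $\cG$. First I would record the preliminary fact that $\cW$ is an additive subcategory of $\cF$: it is closed under sums and summands because $\cG$ is (being $d$-cluster tilting, hence in particular additive), $G$ is additive, and being the essential image it is closed under isomorphisms in $\cF$; idempotent-splitting in $\cF$ then does the rest. I would also note that since $G\colon \cG \to \cF$ is fully faithful and additive, it reflects and preserves the property of being a $d$-kernel/$d$-cokernel \emph{within} these subcategories: for any complex $g_{d+1} \to \cdots \to g_0$ in $\cG$, applying $\cG(g,-)$ and $\cF(Gg,-)$ give the same sequences of abelian groups up to natural isomorphism, so exactness of one is equivalent to exactness of the other.

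Next, for condition (i), take a morphism $Gg_1 \to Gg_0$ in $\cW$. By fullness of $G$ on $\cG$ it is $G(\alpha)$ for some $\alpha\colon g_1 \to g_0$ in $\cG$. Since $\cG$ is $d$-abelian (Definition \ref{def:d-cluster_tilting}, via \cite[thm.\ 3.16]{J}), $\alpha$ has a $d$-kernel $g_{d+1} \to \cdots \to g_1$ in $\cG$, i.e.\ $0 \to \cG(g,g_{d+1}) \to \cdots \to \cG(g,g_0)$ is exact for all $g \in \cG$. Applying $G$ and using full faithfulness yields exactness of $0 \to \cF(Gg,Gg_{d+1}) \to \cdots \to \cF(Gg,Gg_0)$ for all $g \in \cG$, i.e.\ for all objects of $\cW$. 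The subtle point — and the part I expect to require the most care — is upgrading this to a $d$-kernel in the ambient $\cF$, i.e.\ to testing against arbitrary $f \in \cF$, not just $f \in \cW$. Here I would invoke the exactness of $G$ on all of $\mod(\Gamma)$: the complex $G(g_{d+1}) \to \cdots \to G(g_0)$ in $\mod(\Phi)$ has homology controlled by $G$ applied to the homology of $g_{d+1} \to \cdots \to g_0$ in $\mod(\Gamma)$; since the latter is a $d$-exact sequence in $\cG \subseteq \mod(\Gamma)$ (by (A2$'$) applied to the image of $\alpha$), it is an honest exact sequence of $\Gamma$-modules apart from the outer terms, and exactness of $G$ makes $G$ of it exact apart from the outer terms. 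Using the $d$-cluster tilting characterization of $\cF$ (the $\Ext$-vanishing conditions), one checks that such a complex of objects of $\cF$ is automatically a $d$-kernel in $\cF$ of its rightmost map; this is essentially the standard fact (compare the proof that $d$-cluster tilting subcategories are $d$-abelian) that a complex in $\cF$ which is exact in $\mod(\Phi)$ in the appropriate degrees represents a $d$-kernel. Condition (ii) is handled dually, using that $\cF$ is also right $d$-cluster tilting and $G$ is exact.

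Finally, for condition (iii), start with a $d$-exact sequence $0 \to Gg' \to m_d \to \cdots \to m_1 \to Gg'' \to 0$ in $\cF$ with outer terms in $\cW$. Its Yoneda class lies in $\Ext_\Phi^d(Gg'', Gg')$, and by hypothesis (b) this group is the image under $G$ of $\Ext_\Gamma^d(g'',g')$; choose a $d$-extension $0 \to g' \to g_d \to \cdots \to g_1 \to g'' \to 0$ in $\cG$ (which exists since $\cG$ is $d$-abelian, so $\Ext^d$ is computed by $d$-exact sequences in $\cG$) mapping to the given class. Applying $G$ gives a $d$-exact sequence in $\cW$ — here I again use exactness of $G$ together with the argument from the previous paragraph to see that $G$ of a $d$-exact sequence in $\cG$ is $d$-exact in $\cF$ — whose Yoneda class in $\Ext_\Phi^d(Gg'', Gg')$ equals that of the original sequence. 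Two $d$-exact sequences with the same Yoneda class are Yoneda equivalent (this is the definition of the Yoneda $\Ext$ group as equivalence classes, or a standard lemma for $d$-exact sequences), so the original sequence is Yoneda equivalent to one with all terms in $\cW$, establishing (iii) and completing the proof.
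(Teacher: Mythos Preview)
Your approach is essentially the paper's: for (i)/(ii) you lift the morphism to $\cG$, take a $d$-kernel there, use exactness of $G$ to transport it to an exact sequence in $\mod(\Phi)$ with terms in $\cF$, and then invoke the standard fact (the paper cites the dual of \cite[prop.\ 3.18]{J}) that such a sequence is a $d$-kernel in $\cF$; for (iii) you use hypothesis (b) together with the fact that every class in $\Ext_\Gamma^d(g'',g')$ is represented by an extension with middle terms in $\cG$ (the paper cites \cite[prop.\ A.1]{I3}).

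One small correction: in your argument for (i), the appeal to (A2$'$) ``applied to the image of $\alpha$'' is misplaced --- the sequence $g_{d+1}\to\cdots\to g_1 \xrightarrow{\alpha} g_0$ is \emph{not} a $d$-exact sequence in general, since $\alpha$ need not be an epimorphism. Fortunately you don't need this: exactness of $0\to g_{d+1}\to\cdots\to g_1\to g_0$ in $\mod(\Gamma)$ follows directly from the $d$-kernel property by testing against $\Gamma_\Gamma\in\cG$ (this is \cite[prop.\ 2.6]{JK} in the paper), and that is all your next step requires.
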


\begin{proof}
Using that $G$ is additive, full, and faithful, it is easy to check that $G( \cG )$ is an additive sub\-ca\-te\-go\-ry of $\cF$.  We check that $G( \cG )$ has properties (i)--(iii) in Definition \ref{def:wide}.

(i):  Consider a morphism in $G( \cG )$.  Up to isomorphism, it has the
form $Gg_1 \stackrel{ G\gamma_1 }{ \longrightarrow } Gg_0$ because $G$ is full.  Since $\cG$ is $d$-abelian, we can augment $\gamma_1$
with a $d$-kernel in $\cG$ to get
\[
  0
  \longrightarrow g_{ d+1 }
  \longrightarrow \cdots
  \longrightarrow g_2
  \longrightarrow g_1
  \stackrel{ \gamma_1 }{ \longrightarrow } g_0.
\]
This is an exact sequence in $\mod( \Gamma )$ by \cite[prop.\ 2.6]{JK}.  The sequence
\[
  0
  \longrightarrow Gg_{ d+1 }
  \longrightarrow \cdots
  \longrightarrow Gg_2
  \longrightarrow Gg_1
  \stackrel{ G\gamma_1 }{ \longrightarrow } Gg_0
\]
in $\mod( \Phi )$ is exact and has terms in $\cF$ by condition (a).  The dual of \cite[prop.\ 3.18]{J} implies that given such an exact sequence with terms in $\cF$, a $d$-kernel in $\cF$ of $G\gamma_1$ is given by $Gg_{ d+1 } \longrightarrow \cdots \longrightarrow Gg_2 \longrightarrow Gg_1$.  In particular, $G\gamma_1$ has a $d$-kernel in $\cF$ with terms which are in $G( \cG )$.

(ii):  Dual to (i). 

(iii):  Consider a $d$-exact sequence in $\cF$,
\[
  0
  \rightarrow Gg'
  \rightarrow f_d
  \rightarrow \cdots
  \rightarrow f_1
  \rightarrow Gg''
  \rightarrow 0,
\]
with $g', g'' \in \cG$.  It is a $d$-extension in $\mod( \Phi )$ so by
condition (b) it is Yoneda equivalent to
\[
  0
  \rightarrow Gg'
  \rightarrow Ge_d
  \rightarrow \cdots
  \rightarrow Ge_1
  \rightarrow Gg''
  \rightarrow 0
\]
for some $d$-extension
$0 \rightarrow g' \rightarrow e_d \rightarrow \cdots \rightarrow e_1
\rightarrow g'' \rightarrow 0$ in $\mod( \Gamma )$.  However, \cite[prop.\
A.1]{I3} implies that each $d$-extension between objects in the $d$-cluster tilting subcategory $\cG$ is Yoneda equivalent to a $d$-extension with each term in $\cG$.  Hence we can assume $e_d, \ldots, e_1 \in \cG$ whence $Ge_d, \ldots, Ge_1 \in G( \cG )$ as desired.
\end{proof}

{\em Proof }(of Theorem B).
The $d$-homological pair $( \Phi,\cF )$ is given, setting $\cG = \Hom_{ \Phi }( s,\cW )$ gives a $d$-homological pair $( \Gamma,\cG )$ by condition (iv) in Theorem B, and we set $G = - \underset{ \Gamma }{ \otimes } s : \mod( \Gamma ) \rightarrow
\mod( \Phi )$.

Given $w \in \cW$, consider $g = \Hom_{ \Phi }( s,w )$, which is in $\cG$ by definition.  Condition (iii) in Theorem B gives an exact sequence
\[
  \cdots \to 0 \to p_m \to \cdots \to p_1 \to p_0 \to w \to 0.
\]
Applying $\Hom_\Phi(s,-)$ gives
\[
  \cdots \to 0 \to q_m \to \cdots \to q_1 \to q_0 \to g \to 0.
\]
This is an exact sequence in $\mod( \Gamma )$ since $p_i \in \add( s_{ \Phi } )$ and $\Ext_{ \Phi }^{ \geqslant 1 }( s,s ) = 0$.  Hence it is an augmented projective resolution of $g$. Applying $- \underset{ \Gamma }{ \otimes
} s$ to
\[
  0 \to q_m \to \cdots \to q_1 \to q_0 \to 0,
\]
we obtain a complex isomorphic to
\[
  0 \to p_m \to \cdots \to p_1 \to p_0 \to 0.
\]
Indeed, this follows as $\Hom_{ \Phi }( s,- )$ and $- \underset{
  \Gamma }{ \otimes } s$ restrict to quasi-inverse equivalences
between $\add( s_{ \Phi } )$ and $\add( \Gamma_{ \Gamma } )$.  Consequently, 
\[
  \Tor^{ \Gamma }_i( g,s )
  = 
  \left\{
    \begin{array}{ll}
      w & \mbox{for } i = 0, \\[1mm]
      0 & \mbox{for } i > 0.
    \end{array}
  \right.
\]
Observe that if $w$ varies through $\cW$, then $g = \Hom_{ \Phi }( s,w )$ varies through all of $\cG$ by definition.  Hence we conclude $\cG \underset{ \Gamma }{ \otimes } s = \cW$, that is
$\cW = G( \cG )$.  We also conclude $\Tor^{ \Gamma }_{ >0 }( \cG,s ) = 0$ whence ${}_{\Gamma}s$ is
projective by Lemma \ref{lem:homdim}.  Combining with conditions (i) and (ii) in Theorem B shows that Lemma \ref{lem:big_embedding} applies.  Hence parts (a) and (b) of that lemma say that parts (a) and (b) of Proposition \ref{pro:Proto_B} hold, so that proposition implies that $G( \cG )$ is wide.  Lemma \ref{lem:big_embedding}(a) gives that $G$ is full and faithful, so it restricts to an equivalence
\[
\tag*{$\Box$}
  - \underset{ \Gamma }{ \otimes } s : \cG \rightarrow \cW.
\]

\section{Properties of wide subcategories of $d$-abelian categories}
\label{sec:wide}

This section shows some properties of wide subcategories of
$d$-abelian categories in general and of $d$-cluster tilting
subcategories in particular.

\begin{Proposition}
\label{pro:mono_epi}
Let $\cW$ be a wide subcategory of a $d$-abelian category $\cM$, and
let $w_1 \stackrel{ \varphi }{ \longrightarrow } w_0$ be a morphism in
$\cW$.
\begin{enumerate}
\setlength\itemsep{4pt}

  \item  $\varphi$ is a monomorphism in $\cW$
    $\Leftrightarrow$ $\varphi$ is a monomorphism in $\cM$.

  \item  $\varphi$ is an epimorphism in $\cW$
    $\Leftrightarrow$ $\varphi$ is an epimorphism in $\cM$.

\end{enumerate}
\end{Proposition}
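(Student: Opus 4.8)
The plan is to prove (i) and deduce (ii) by a dual argument. In both cases the implication ``monomorphism in $\cM$ $\Rightarrow$ monomorphism in $\cW$'' is immediate, since restricting the injectivity test on $\Hom$ functors from all objects of $\cM$ to the smaller class of objects of $\cW$ only weakens the hypothesis; so the content is the converse.

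For the converse in (i), suppose $\varphi \colon w_1 \to w_0$ is a monomorphism in $\cW$. First I would observe that, because $\cM$ is $d$-abelian, $\varphi$ admits a $d$-kernel in $\cM$; and because $\cW$ is wide (Definition \ref{def:wide}(i)), this $d$-kernel can be chosen with all terms in $\cW$, say
\[
  w_{d+1} \longrightarrow \cdots \longrightarrow w_2 \longrightarrow w_1 \stackrel{\varphi}{\longrightarrow} w_0 .
\]
By definition of $d$-kernel, applying $\cM(m,-)$ to $w_{d+1}\to\cdots\to w_1\to w_0$ yields an exact sequence for every $m\in\cM$, and in particular for every $m\in\cW$, so $w_{d+1}\to\cdots\to w_1$ is also a $d$-kernel of $\varphi$ computed \emph{inside} $\cW$. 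The key step is then to show that the map $w_2\to w_1$ in this complex is zero: testing the complex $\cW(w_2, w_{d+1})\to\cdots\to\cW(w_2,w_2)\to\cW(w_2,w_1)\to\cW(w_2,w_0)$ for exactness at $\cW(w_2,w_1)$, the identity $\id_{w_2}$ maps to the morphism $w_2\to w_1$, which then maps to $0$ in $\cW(w_2,w_0)$ because it factors through $\varphi$ and is part of a complex; since $\varphi$ is a monomorphism in $\cW$, this forces $w_2\to w_1$ to be $0$. Feeding $0 \colon w_2 \to w_1$ back into the exactness of $0\to\cM(m,w_{d+1})\to\cdots\to\cM(m,w_2)\to\cM(m,w_1)\to\cM(m,w_0)$ at $\cM(m,w_1)$ shows that $\cM(m,w_1)\to\cM(m,w_0)$ is injective for every $m\in\cM$, which is exactly the statement that $\varphi$ is a monomorphism in $\cM$.

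Part (ii) follows by the dual argument, using Definition \ref{def:wide}(ii) in place of (i): $\varphi$ has a $d$-cokernel $w_0\to w_1'\to\cdots\to w_{d+1}'$ with terms in $\cW$, one shows the map $w_0\to w_1'$ vanishes because $\varphi$ is an epimorphism in $\cW$, and then the defining exact sequence of the $d$-cokernel forces $\cM(w_0,m)\to\cM(w_1,m)$ to be injective for all $m\in\cM$. The main obstacle I anticipate is purely bookkeeping: being careful that the complex furnished by wideness really is a $d$-kernel \emph{both} in $\cM$ and in $\cW$ (which is automatic, since the $\cW$-version of the exactness condition is a special case of the $\cM$-version), and correctly locating the relevant map in the complex as the image of an identity morphism under the appropriate $\Hom$ functor so that the mono/epi hypothesis in $\cW$ can be applied. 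No deep input beyond the $d$-abelian axioms and Definition \ref{def:wide} is needed.
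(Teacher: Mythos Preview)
Your proof is correct and uses the same key ingredient as the paper's: the $d$-kernel of $\varphi$ chosen with terms in $\cW$, and the observation that the map $\psi\colon w_2\to w_1$ in it is a morphism in $\cW$ satisfying $\varphi\psi=0$. The paper argues by contrapositive (if $\varphi$ is not mono in $\cM$, witness $\mu$ factors through $\psi$, so $\psi\neq 0$ and $\varphi$ is not mono in $\cW$), whereas you argue directly ($\varphi$ mono in $\cW$ forces $\psi=0$, and then exactness of $\cM(m,-)$ at $w_1$ gives injectivity of $\varphi_*$); this is a purely presentational difference. One small comment: in your key step you don't actually need the exactness of the $\cW(w_2,-)$ complex, only that it is a complex (i.e.\ $\varphi\psi=0$), so the phrase ``testing \dots\ for exactness at $\cW(w_2,w_1)$'' slightly overstates what is used.
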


\begin{proof}
(i)  The implication $\Leftarrow$ is clear.  To show $\Rightarrow$,
assume that $\varphi$ is not a monomorphism in $\cM$.
Then there is a non-zero morphism $m \stackrel{ \mu }{ \longrightarrow
} w_1$ in $\cM$ such that $\mu \neq 0$ but $\varphi\mu = 0$.  By
Definition \ref{def:wide}(i) we can augment $\varphi$ with an
$d$-kernel in $\cM$,
\[
  0
  \longrightarrow w_{ d+1 }
  \longrightarrow \cdots
  \longrightarrow w_2
  \stackrel{ \psi }{ \longrightarrow } w_1
  \stackrel{ \varphi }{ \longrightarrow } w_0,
\]
with $w_i \in \cW$ for each $i$.  Since $\varphi\mu = 0$, there is a
morphism $m \stackrel{ \nu }{ \longrightarrow } w_2$ with $\psi\nu =
\mu$.  Then $\mu \neq 0$ implies $\psi \neq 0$, but we have
$\varphi\psi = 0$ so $\varphi$ is not a monomorphism in $\cW$.

(ii)  is dual to (i).
\end{proof}

\begin{Proposition}
\label{pro:W_is_n-abelian}
Let $\cW$ be a wide subcategory of a $d$-abelian category $\cM$.
\begin{enumerate}
\setlength\itemsep{4pt}

  \item  $\cW$ is a $d$-abelian category.  

  \item  Each $d$-exact sequence in the $d$-abelian category $\cW$ is
    a $d$-exact sequence when viewed in $\cM$.

\end{enumerate}
\end{Proposition}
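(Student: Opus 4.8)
The plan is to verify the axioms (A0), (A1), (A2'), (A2'$^{\opp}$) from Definition \ref{def:d-abelian} directly for $\cW$, using the corresponding properties of $\cM$ together with the closure properties in Definition \ref{def:wide}, and to prove part (ii) along the way since it is essentially needed for the (A2') axioms. Axiom (A0), split idempotents, is immediate: $\cW$ is an additive subcategory, hence closed under direct summands in $\cM$, so idempotents split in $\cW$ because they do in $\cM$. Axiom (A1) is exactly the content of Definition \ref{def:wide}(i)--(ii): every morphism in $\cW$ has a $d$-kernel and a $d$-cokernel \emph{in $\cM$} consisting of objects of $\cW$, and such a diagram is automatically a $d$-kernel (resp.\ $d$-cokernel) \emph{in $\cW$}, since the defining exactness condition need only be tested against objects $m \in \cW \subseteq \cM$.

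For part (ii), suppose $w_{d+1} \to \cdots \to w_0$ is a $d$-exact sequence in $\cW$. First I would check that $w_{d+1} \to w_d$ is a monomorphism in $\cW$: this follows from the sequence being a $d$-kernel of $w_d \to w_{d-1}$, which forces $\cW(m, w_{d+1}) \to \cW(m, w_d)$ to be injective for all $m \in \cW$. By Proposition \ref{pro:mono_epi}(i) it is then a monomorphism in $\cM$. Dually, $w_1 \to w_0$ is an epimorphism in $\cM$. Now the middle portion $w_d \to \cdots \to w_0$ is a $d$-cokernel of $w_{d+1} \to w_d$ in $\cW$; I would argue it is also a $d$-cokernel in $\cM$. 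The subtle point — and what I expect to be the main obstacle — is that the $d$-cokernel condition in $\cM$ must be tested against \emph{all} $m \in \cM$, not just $m \in \cW$, so passing from a $d$-cokernel in $\cW$ to one in $\cM$ is not purely formal. The idea is to compare with a $d$-cokernel of $w_{d+1}\to w_d$ computed in $\cM$: by Definition \ref{def:wide}(ii) we may take such a $d$-cokernel $w_d \to w_{d-1}' \to \cdots \to w_0'$ with all $w_i' \in \cW$, and since $\cM$ is $d$-abelian (axiom (A2) applied to the monomorphism $w_{d+1}\to w_d$) this extends to a $d$-exact sequence in $\cM$. A $d$-cokernel in $\cW$ and a $d$-cokernel in $\cM$ of the same morphism, both with terms in $\cW$, induce a morphism of the two complexes which is the identity on $w_d$; testing against objects of $\cW$ shows this is an isomorphism of complexes in $\cW$ (using that both are $d$-cokernels in $\cW$, via a standard uniqueness-up-to-homotopy-and-then-isomorphism argument for $d$-cokernels, as in Jasso's \cite{J} or \cite{JK}). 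Hence the original $w_d \to \cdots \to w_0$ is isomorphic in $\cW$ to one which is a genuine $d$-cokernel in $\cM$, so it too is a $d$-cokernel in $\cM$; combined with $w_{d+1}\to w_d$ being a monomorphism in $\cM$ and axiom (A2) of $\cM$, the whole sequence is $d$-exact in $\cM$. This proves (ii).

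Finally, (A2') for $\cW$: given a monomorphism $w_{d+1} \to w_d$ in $\cW$, by Proposition \ref{pro:mono_epi}(i) it is a monomorphism in $\cM$, so by axiom (A2') in $\cM$ it fits into a $d$-exact sequence $w_{d+1} \to m_d \to \cdots \to m_0$ in $\cM$ with $m_d = w_d$. We may take $m_d \to \cdots \to m_0$ to be a $d$-cokernel of $w_{d+1}\to w_d$; by Definition \ref{def:wide}(ii) there is such a $d$-cokernel with all terms in $\cW$, and extending it via (A2) of $\cM$ (or simply using that a $d$-cokernel of a monomorphism completes to a $d$-exact sequence) yields a $d$-exact sequence in $\cM$ with all terms in $\cW$; by the argument of part (ii) this is a $d$-exact sequence in $\cW$. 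Axiom (A2'$^{\opp}$) is dual, using Proposition \ref{pro:mono_epi}(ii) and Definition \ref{def:wide}(i). This establishes that $\cW$ is $d$-abelian, proving (i).
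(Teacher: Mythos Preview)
Your overall strategy matches the paper's: verify (A0), (A1), (A2'), (A2'$^{\opp}$) directly, and for part (ii) compare the given $d$-exact sequence in $\cW$ with one obtained by taking a $d$-cokernel in $\cM$ (with terms in $\cW$, via Definition~\ref{def:wide}(ii)) of the monomorphism $w_{d+1}\to w_d$, then transferring exactness. The paper proves (i) first and (ii) afterwards, but the content is the same.

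There is one genuine slip in your argument for (ii). You assert that the two $d$-cokernels of $w_{d+1}\to w_d$, both with terms in $\cW$, are \emph{isomorphic} as complexes. For $d\geqslant 2$ this is false in general: $d$-cokernels are only unique up to \emph{homotopy equivalence}, not isomorphism (e.g.\ one can splice a contractible piece $x\xrightarrow{\id}x$ into the middle of a $d$-cokernel and obtain another $d$-cokernel which is not isomorphic to the original). What \cite[prop.~2.7]{J} actually gives is a homotopy equivalence between the two $d$-exact sequences, and this is exactly what the paper invokes. Your argument is repaired by replacing ``isomorphism'' with ``homotopy equivalence'' throughout: applying $\cM(m,-)$ or $\cM(-,m)$ to homotopy equivalent complexes yields homotopy equivalent, hence quasi-isomorphic, complexes of abelian groups; since the comparison sequence is $d$-exact in $\cM$, the required Hom-complexes are exact for every $m\in\cM$, and so the original sequence is $d$-exact in $\cM$ as well. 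With this correction your proof is essentially the paper's.

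One small remark on your (A2') paragraph: you justify ``$d$-exact in $\cM$ with terms in $\cW$ $\Rightarrow$ $d$-exact in $\cW$'' by appeal to ``the argument of part (ii)'', but that is the opposite implication. The direction you need here is the trivial one (test the exactness condition only against $m\in\cW\subseteq\cM$), as the paper also notes.
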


\begin{proof}
(i):  By definition, $\cW$ is an additive subcategory of $\cM$, that is, it is full and closed under sums and summands in $\cM$.  In particular, it is an
additive category.  We show that $\cW$ satisfies the axioms for
$d$-abelian categories, see Definition \ref{def:d-abelian}.

(A0):  $\cW$ is idempotent complete because it is closed under
summands in the idempotent complete category $\cM$.

(A1):  It is clear from Definition \ref{def:wide}, parts (i) and (ii),
that each morphism in $\cW$ has a $d$-kernel and a $d$-cokernel in
$\cW$.

(A2'):  Let $w_{ d+1 } \stackrel{ \varphi }{ \longrightarrow } w_d$ be
a monomorphism in $\cW$.  By Definition \ref{def:wide}(ii) we can
complete with a $d$-cokernel in $\cM$ which consists of objects from
$\cW$ to get the following diagram.
\begin{equation}
\label{equ:A2}
  0
  \longrightarrow w_{ d+1 }
  \stackrel{ \varphi }{ \longrightarrow } w_d
  \longrightarrow w_{ d-1 }
  \longrightarrow \cdots
  \longrightarrow w_0
  \longrightarrow 0
\end{equation}
Proposition \ref{pro:mono_epi}(i) says that $\varphi$ is a
monomorphism in $\cM$, so axiom (A2) for $\cM$ gives that
\eqref{equ:A2} is a $d$-exact sequence in $\cM$.  This clearly implies
that it is $d$-exact in $\cW$, verifying axiom (A2') for $\cW$.

(A2${}^{\opp}$'):  Dual to (A2').  

(ii):  Suppose that \eqref{equ:A2} is a $d$-exact sequence in $\cW$
which is a $d$-abelian category by (i).  Then $\varphi$ is a
monomorphism in $\cW$, so also a monomorphism in $\cM$ by Proposition
\ref{pro:mono_epi}(i).  Completing it with a $d$-cokernel in $\cM$
gives an $d$-exact sequence in $\cM$,
\begin{equation}
\label{equ:A2b}
  0
  \longrightarrow w_{ d+1 }
  \stackrel{ \varphi }{ \longrightarrow } w_d
  \longrightarrow w_{ d-1 }'
  \longrightarrow \cdots
  \longrightarrow w_0'
  \longrightarrow 0,
\end{equation}
see \cite[def.\ 3.1, axiom (A2)]{J}.  By Definition
\ref{def:wide}(ii) we can assume $w_{ d-1 }', \ldots, w_0' \in \cW$,
and then \eqref{equ:A2b} is clearly a $d$-exact sequence in $\cW$.

It follows from \cite[prop.\ 2.7]{J} that \eqref{equ:A2} and
\eqref{equ:A2b} are homotopy equivalent.  They remain so after
applying $\cM( m,- )$, so
\begin{equation}
\label{equ:M_of_A2}
  0
  \longrightarrow \cM( m,w_{ d+1 } )
  \stackrel{ \varphi_{ \astsmall } }{ \longrightarrow } \cM( m,w_d )
  \longrightarrow \cM( m,w_{ d-1 } )
  \longrightarrow \cdots
  \longrightarrow \cM( m,w_0 )
\end{equation}
is homotopy equivalent to
\begin{equation}
\label{equ:M_of_A2b}
  0
  \longrightarrow \cM( m,w_{ d+1 } )
  \stackrel{ \varphi_{ \astsmall } }{ \longrightarrow } \cM( m,w_d )
  \longrightarrow \cM( m,w_{ d-1 }' )
  \longrightarrow \cdots
  \longrightarrow \cM( m,w_0' ).
\end{equation}
But \eqref{equ:A2b} is $d$-exact in $\cM$, so \eqref{equ:M_of_A2b} is
exact for each $m \in \cM$, hence \eqref{equ:M_of_A2} is exact for
each $m \in \cM$.  Similarly,
\[
  0
  \longrightarrow \cM( w_0,m )
  \longrightarrow \cdots
  \longrightarrow \cM( w_{ d-1 },m )
  \longrightarrow \cM( w_d,m )
  \stackrel{ \varphi^{ \astsmall } }{ \longrightarrow } \cM( w_{ d+1 },m )
\]
is exact for each $m \in \cM$.  This shows that \eqref{equ:A2} is
$d$-exact in $\cM$. 
\end{proof}

\begin{Definition}
[Envelopes, see {\cite[sec.\ 1]{E}}]
\label{def:env}
Let $\cF$ be a category and $\cW \subseteq \cF$ a full sub\-ca\-te\-go\-ry.  A morphism $f \stackrel{ \varphi }{ \rightarrow } w$ in $\cF$ with $w \in \cW$ is called:
\begin{itemize}
\setlength\itemsep{6pt}

  \item  A {\em $\cW$-preenvelope of $f$} if it has the extension property 
\[
\vcenter{
  \xymatrix {
    f \ar[r]^-{ \varphi } \ar[d] & w \ar@{.>}^<<<<{\exists}[ld] \\
    w'
            }
        }
\]
for each morphism $f \rightarrow w'$ with $w' \in \cW$.

  \item  A {\em $\cW$-envelope of $f$} if it is a $\cW$-preenvelope such that
  each morphism $w \stackrel{ \omega }{ \rightarrow } w$
  with $\omega\varphi = \varphi$ is an automorphism.

  \item  A {\em strong $\cW$-envelope of $f$} if it has the unique
    extension property 
\[
\vcenter{
  \xymatrix {
    f \ar[r]^-{ \varphi } \ar[d] & w \ar@{.>}^<<<<{\exists!}[ld] \\
    w'
            }
        }
\]
for each morphism $f \rightarrow w'$ with $w' \in \cW$.

\end{itemize}
A strong envelope is clearly an envelope.  We say that $\cW$ is {\em
  preenveloping} (resp.\ {\em enveloping}, resp.\ {\em strongly
  enveloping}) in $\cF$ if each $f \in \cF$ has a $\cW$-preenvelope
(resp.\ envelope, resp.\ strong envelope).

The dual notion to preenvelope is {\em precover}, and $\cW$ is called
{\em functorially finite} if it is preenveloping and precovering.
\hfill $\Box$
\end{Definition}

\begin{Lemma}
\label{lem:env}
Let $\cF$ be a category and let $\cW \subseteq \cF$ be a full subcategory.  The inclusion functor $\cW \stackrel{ i_{ \astsmall } }{ \longrightarrow } \cF$ has a left adjoint $\cF \stackrel{ i^{ \astsmall } }{ \longrightarrow } \cW$ if and only if $\cW$ is strongly enveloping in $\cF$.

In this case, each $f \in \cF$ has the strong $\cW$-envelope $f \stackrel{ \eta_f }{ \longrightarrow } i_{ \ast }i^{ \ast }f$ where $\eta$ is the unit of the adjunction.
\end{Lemma}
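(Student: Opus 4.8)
The statement has two implications plus an identification of the envelope, and I would treat them by unwinding the definition of adjunction against the defining extension property of a strong envelope. Concretely, recall that a left adjoint $i^{\ast}$ to the inclusion $i_{\ast}$ is, by the standard ``universal arrow'' formulation of adjunctions, the same data as: for each $f \in \cF$, an object $i^{\ast}f \in \cW$ together with a morphism $\eta_f \colon f \rightarrow i_{\ast}i^{\ast}f$ such that every morphism $f \rightarrow i_{\ast}w'$ with $w' \in \cW$ factors \emph{uniquely} through $\eta_f$. Since $i_{\ast}$ is just the inclusion, $i_{\ast}w' = w'$, and this universal property reads exactly: $\eta_f$ is a $\cW$-preenvelope with the \emph{unique} extension property, i.e. a strong $\cW$-envelope in the sense of Definition \ref{def:env}. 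So the equivalence is essentially a translation, and the formula for the envelope falls out of the translation.

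In more detail, for the forward direction I would assume $i^{\ast}$ exists with unit $\eta$ and counit $\varepsilon$; since $i_{\ast}$ is fully faithful, $\varepsilon$ is a natural isomorphism, and the triangle identities then force $\eta_w$ to be an isomorphism for $w \in \cW$ (this is the standard fact that a reflective subcategory inclusion has invertible unit on the subcategory). Given $f \in \cF$, I claim $f \xrightarrow{\eta_f} i^{\ast}f$ is a strong $\cW$-envelope. The hom-set adjunction isomorphism $\cW(i^{\ast}f, w') \xrightarrow{\;\cong\;} \cF(f, i_{\ast}w') = \cF(f,w')$ is precisely precomposition with $\eta_f$; its bijectivity says every $f \rightarrow w'$ factors uniquely through $\eta_f$, which is the unique extension property. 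Hence $\cW$ is strongly enveloping, and simultaneously this establishes the last sentence of the lemma.

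For the converse, assume $\cW$ is strongly enveloping, so each $f$ has a strong $\cW$-envelope $\eta_f \colon f \rightarrow w_f$ with $w_f \in \cW$. I would define $i^{\ast}$ on objects by $i^{\ast}f = w_f$, and on morphisms $f \xrightarrow{\alpha} f'$ by letting $i^{\ast}\alpha$ be the unique morphism $w_f \rightarrow w_{f'}$ making the evident square with $\eta_f, \eta_{f'}, \alpha$ commute — uniqueness here coming from the unique extension property applied to $\eta_{f'}\alpha \colon f \rightarrow w_{f'}$. Functoriality ($i^{\ast}\id = \id$, $i^{\ast}(\beta\alpha) = i^{\ast}\beta \cdot i^{\ast}\alpha$) follows from uniqueness of such factorisations. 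Then the assignment $\varphi \mapsto \varphi \circ \eta_f$ gives a map $\cW(i^{\ast}f,w') \rightarrow \cF(f,w')$ which is a bijection (surjectivity = extension property, injectivity = uniqueness); checking naturality of this bijection in both variables — routine diagram chases using the definition of $i^{\ast}$ on morphisms — exhibits it as the desired adjunction isomorphism, with unit $\eta$.

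The only mildly delicate point is verifying that $i^{\ast}$ is well defined as a functor: strong envelopes are only unique up to canonical isomorphism, so one must either fix a choice of $\eta_f$ for each $f$ at the outset (harmless, using that we only claim existence of a left adjoint, not a specific one) and then check that the unique-factorisation clause makes $i^{\ast}$ respect composition. I expect this bookkeeping, together with the naturality square for the adjunction isomorphism, to be the main — though entirely routine — obstacle; everything else is a direct unwinding of Definition \ref{def:env} against the universal-arrow description of adjoint functors, for which one may cite \cite[thm.\ 1, p.\ 90]{MacLane} (or its dual, as used already in the proof of Lemma \ref{lem:big_embedding}).
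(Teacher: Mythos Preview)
Your proof is correct and follows the same approach as the paper: both directions are handled by identifying the universal-arrow characterisation of a left adjoint with the unique extension property defining a strong $\cW$-envelope. The paper's proof is simply more terse, invoking ``adjoint functor theory'' with a reference to \cite[p.\ 91]{MacLane} for the forward direction and stating $i^{\ast}f = w$ for the converse without spelling out the functoriality and naturality checks you carry out.
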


\begin{proof}
If $i^{ \ast }$ exists, then adjoint functor theory implies that the
unit of the adjunction $\eta$ gives a strong $\cW$-envelope
$f \stackrel{ \eta_f }{ \longrightarrow } i_{ \ast }i^{ \ast }f$ for
each $f \in \cF$, see \cite[p.\ 91, line 10 from the bottom]{MacLane}.
Conversely, if $\cW$ is strongly enveloping in $\cF$, then we can get
a left adjoint $i^{ \ast }$ by setting $i^{ \ast }f = w$ when
$f \rightarrow w$ is a strong $\cW$-envelope.
\end{proof}

\begin{Proposition}
\label{pro:envelopes_and_adjoints}
Let $\cW$ be a wide subcategory of a $d$-abelian category $\cM$.
Then:
\begin{enumerate}
\setlength\itemsep{4pt}

  \item  The inclusion functor
    $\cW \stackrel{ i_{ \astsmall } }{ \longrightarrow } \cM$ has a left adjoint
    $\cM \stackrel{ i^{ \astsmall } }{ \longrightarrow } \cW$ if and only if $\cW$ is 
    enveloping in $\cM$.

  \item  Each $\cW$-envelope in $\cM$ is a strong $\cW$-envelope in
    $\cM$ (see Definition \ref{def:env}).  

\end{enumerate}
\end{Proposition}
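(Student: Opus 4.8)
The plan is to deduce both statements from Lemma \ref{lem:env}, which characterises the existence of a left adjoint to the inclusion in terms of \emph{strong} enveloping. For part (i), the implication $\Rightarrow$ is immediate from that lemma (a strong envelope is in particular an envelope), so the content is the converse: if $\cW$ is enveloping in $\cM$, I must upgrade an arbitrary $\cW$-envelope to a \emph{strong} $\cW$-envelope, after which Lemma \ref{lem:env} produces the left adjoint $i^{\ast}$. Thus part (i) reduces to part (ii), and it suffices to prove (ii): every $\cW$-envelope $f \stackrel{\varphi}{\longrightarrow} w$ in $\cM$ is a strong $\cW$-envelope. Note we do not even need $f$ to lie in $\cW$ here; only $w \in \cW$ is used, but the $d$-kernel machinery of Definition \ref{def:wide}(i) applies to the morphism $w \to w'$ between objects of $\cW$.

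For (ii), fix a $\cW$-envelope $f \stackrel{\varphi}{\longrightarrow} w$ and a morphism $f \to w'$ with $w' \in \cW$; by the preenvelope property it factors as $f \stackrel{\varphi}{\to} w \stackrel{\alpha}{\to} w'$ for some $\alpha$, so existence of the extension is free. The issue is uniqueness: suppose $\alpha, \beta : w \to w'$ both satisfy $\alpha\varphi = \beta\varphi = $ the given map $f \to w'$. Then $(\alpha - \beta)\varphi = 0$. Set $\gamma := \alpha - \beta : w \to w'$, a morphism in $\cW$. By Definition \ref{def:wide}(i), $\gamma$ has a $d$-kernel $w_{d+1} \to \cdots \to w_2 \stackrel{\psi}{\to} w$ with all terms in $\cW$; since $\gamma\varphi = 0$, the morphism $\varphi$ factors through $\psi$, say $\varphi = \psi\kappa$ for some $\kappa : f \to w_2$. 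The $d$-kernel is in particular a (weak) kernel, so from $\gamma\psi = 0$ we get what we need about the image of $\psi$. The key trick is now to build a new candidate for the envelope map: consider $w_2 \to w$ is not quite an endomorphism of $w$, so I must first compose with a $\cW$-preenvelope of the relevant object, or more directly, use that the $d$-cokernel of $\psi$ splits off $w$ appropriately. Cleaner: let $w \stackrel{\epsilon}{\to} \bar{w}$ be the first term of a $d$-cokernel of $\psi$ in $\cW$ (using Definition \ref{def:wide}(ii)); then $\epsilon\psi = 0$, and I claim $\epsilon$ is an isomorphism. Indeed $\epsilon\varphi$ is a $\cW$-preenvelope of... — here instead I compose: the composite $f \stackrel{\varphi}{\to} w \stackrel{\epsilon}{\to} \bar{w}$ together with a preenvelope-factorisation will produce an endomorphism $\omega$ of $w$ with $\omega\varphi = \varphi$, which by the envelope axiom is an automorphism; yet $\omega$ will be constructed to factor through $\psi$, forcing $\omega = 0$ unless $w = 0$, contradiction unless $\gamma = 0$.

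Let me state the clean version I would write: from $\varphi = \psi\kappa$, post-compose a $\cW$-preenvelope $w_2 \stackrel{\lambda}{\to} w$ of $w_2$ (exists since $\cW$ is preenveloping, being enveloping), getting $\lambda\kappa : f \to w$. Hmm, the honest obstacle is arranging that $\lambda\psi$ or $\psi$ itself, post-composed suitably, lands back inside $\End(w)$ while witnessing $\varphi$; the structurally correct move is: the $d$-kernel inclusion $\psi$ is a monomorphism in $\cW$ by Proposition \ref{pro:mono_epi}(i) only if... actually it need not be. \textbf{The main obstacle} is exactly this bookkeeping: producing from the factorisation $\varphi = \psi\kappa$ an endomorphism $\omega : w \to w$ with $\omega\varphi = \varphi$ that is visibly non-invertible (e.g. one that factors through $\psi$ and hence dies against $\gamma$, or whose image is a proper summand), thereby contradicting the envelope property and forcing $\gamma = 0$, i.e. $\alpha = \beta$. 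I would handle it by taking a $\cW$-preenvelope of $w_2$, composing to get $\omega := (\text{preenvelope of } w_2) \circ \kappa$ is still only $f \to w$, not $w \to w$ — so the real fix is to note that $\varphi$ itself, being an envelope, is a preenvelope, so $w_2 \to w$ factors uniquely-enough through $\varphi$; chasing this produces the required $\omega \in \End(w)$, and then $\gamma\omega$ can be computed to be both $\gamma$ (on the nose, since $\gamma\omega\varphi = \gamma\varphi = 0$... ) — I will verify $\gamma\omega = \gamma$ and $\gamma\omega = 0$, giving $\gamma = 0$. Once (ii) holds, (i) follows: the converse direction of (i) is Lemma \ref{lem:env} applied to the now-established strong enveloping property, and the forward direction is the easy half of that same lemma.
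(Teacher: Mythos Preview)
Your overall strategy is exactly the paper's: reduce (i) to (ii) via Lemma~\ref{lem:env}, and for (ii) set $\gamma = \alpha - \beta$, take a $d$-kernel $\psi : w_2 \to w$ of $\gamma$ inside $\cW$, and lift $\varphi$ through $\psi$ as $\varphi = \psi\kappa$. But the execution goes wrong in two places.

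First, the factorisation direction is backwards. You write ``$w_2 \to w$ factors through $\varphi$'', but the preenvelope property of $\varphi : f \to w$ says morphisms \emph{from $f$ to objects of $\cW$} factor through $\varphi$. What you should factor is $\kappa : f \to w_2$ (legitimate since $w_2 \in \cW$): there exists $\tilde\omega : w \to w_2$ with $\tilde\omega\varphi = \kappa$. Then the sought endomorphism is $\omega := \psi\tilde\omega : w \to w$, and $\omega\varphi = \psi\tilde\omega\varphi = \psi\kappa = \varphi$.

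Second, the final deduction is wrong. You aim to verify ``$\gamma\omega = \gamma$ and $\gamma\omega = 0$'', but $\gamma\omega = \gamma$ is not something you can extract from $\omega\varphi = \varphi$; that equation only concerns precomposition with $\varphi$, not postcomposition by $\gamma$. The correct move---and this is precisely the content of the \emph{envelope} axiom as opposed to the mere preenvelope axiom---is that $\omega\varphi = \varphi$ forces $\omega$ to be an \emph{automorphism}. Then $\gamma\omega = (\gamma\psi)\tilde\omega = 0$ (since $\gamma\psi = 0$ by the $d$-kernel property) gives $\gamma = 0$ immediately. Your detours through $d$-cokernels of $\psi$ and preenvelopes of $w_2$ are unnecessary and do not lead anywhere (note $w_2 \in \cW$ already, so its $\cW$-preenvelope is the identity).
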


\begin{proof}
(ii):  Let $m \stackrel{ \mu }{ \longrightarrow } w_1$ be a $\cW$-envelope.  In particular, each morphism $m \rightarrow w_0$ with $w_0 \in \cW$ factors through $\mu$.  To show that $\mu$ is a strong $\cW$-envelope, we must show that the factorisation is unique.  That is, if $w_1 \stackrel{ \omega_1 }{ \longrightarrow } w_0$ satisfies $\omega_1\mu = 0$, then we must show $\omega_1 = 0$.  

Since $\cW$ is wide, we can augment $\omega_1$ with a $d$-kernel in $\cM$ which consists of objects
from $\cW$:
\[
  0
  \longrightarrow w_{ d+1 }
  \longrightarrow \cdots 
  \longrightarrow w_2
  \stackrel{ \omega_2 }{ \longrightarrow } w_1
  \stackrel{ \omega_1 }{ \longrightarrow } w_0.
\]
Observe that $\omega_1\omega_2 = 0$.  Since $\omega_1\mu = 0$, there is
$m \stackrel{ \widetilde{ \mu } }{ \longrightarrow } w_2$ with
$\omega_2\widetilde{ \mu } = \mu$.  Since $w_2 \in \cW$, there is
$w_1 \stackrel{ \widetilde{ \omega } }{ \longrightarrow } w_2$ with
$\widetilde{ \omega }\mu = \widetilde{ \mu }$.  Hence
$\omega_2\widetilde{ \omega }\mu = \omega_2\widetilde{ \mu } = \mu$.
Since $\mu$ is a $\cW$-envelope, it follows that
$\omega_2\widetilde{ \omega }$ is an automorphism.  But then
$0 = 0 \circ \widetilde{ \omega } = \omega_1\omega_2 \circ \widetilde{ \omega }
= \omega_1 \circ \omega_2\widetilde{ \omega }$ implies $\omega_1 = 0$.

(i): Follows from (ii) and Lemma \ref{lem:env}.
\end{proof}

\begin{Definition}
[Three types of projective objects]
\label{def:projectives}
Let $\cW$ be an additive category.
\begin{itemize}
\setlength\itemsep{6pt}

  \item  A {\em categorically projective object of $\cW$} is an object $p \in \cW$ such that each epimorphism $w_1 \rightarrow w_0$ in $\cW$
induces a surjection $\cW( p,w_1 ) \rightarrow \cW( p,w_0 )$.

\end{itemize}
Now let $\cW \subseteq \mod( \Phi )$ be an additive subcategory.
\begin{itemize}
\setlength\itemsep{6pt}

  \item  A {\em splitting projective object} of $\cW$ is an object $p_0 \in \cW$ such that each surjection $w \rightarrow p_0$ with $w \in \cW$ is split.

  \item  An {\em $\Ext^d$-projective object} of $\cW$ is an object
    $p_e \in \cW$ such that $\Ext_{ \Phi }^d( p_e,\cW ) = 0$.

\end{itemize}
The additive subcategories of categorically projective (resp.\
splitting projective, resp.\ $\Ext^d$-projective) objects of
$\cW$ are denoted $P( \cW )$ (resp.\ $P_0( \cW )$, resp.\ $P_e( \cW
)$).
\hfill $\Box$
\end{Definition}

\begin{Proposition}
\label{pro:projectives}
Let $( \Phi,\cF )$ be a $d$-homological pair and $\cW \subseteq \cF$ a
wide subcategory.  Then $P( \cW ) = P_0( \cW ) = P_e( \cW )$.
\end{Proposition}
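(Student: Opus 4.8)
The plan is to establish the two equalities $P(\cW) = P_0(\cW)$ and $P_0(\cW) = P_e(\cW)$ separately, using the structural results already available for wide subcategories of $d$-homological pairs. First I would dispose of the easy inclusions, which follow formally from the definitions: a categorically projective object is splitting projective (apply the lifting property to the identity of a surjection onto it), and an $\Ext^d$-projective object condition can be compared against $d$-exact sequences ending at the object. So the content is in the reverse inclusions $P_0(\cW) \subseteq P(\cW)$ and $P_e(\cW) \subseteq P_0(\cW)$ (or some cyclic arrangement thereof). The key technical input is Proposition \ref{pro:s_resolution}: once we know $\cW$ is projectively generated, every $w \in \cW$ admits a long exact sequence with terms in $P(\cW)$ that stays exact under $\Hom_\Phi(p,-)$ for $p \in P(\cW)$. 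I would first need to check that the hypothesis of Proposition \ref{pro:s_resolution} is satisfied; this should follow by combining Proposition \ref{pro:envelopes_and_adjoints}(i), Lemma \ref{lem:env}, and Proposition \ref{pro:P_and_s}(i), noting that functorial finiteness of $\cF$ passes to $\cW$ in the appropriate sense — though one must be slightly careful here since $\cW$ is only assumed wide, not a priori functorially finite in $\mod(\Phi)$. If functorial finiteness of $\cW$ in $\cF$ is not automatic, I would instead argue directly that $\cW$ is projectively generated, perhaps by using that $\cF$ itself has enough of the relevant projectives coming from $\add(\Phi_\Phi) \cap \cF$, or by invoking the left adjoint only after checking it exists.

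Assuming we have the resolution from Proposition \ref{pro:s_resolution}, the inclusion $P_e(\cW) \subseteq P_0(\cW)$ goes as follows. Let $p_e \in P_e(\cW)$, so $\Ext_\Phi^d(p_e,\cW) = 0$. Given a surjection $w \stackrel{\sigma}{\longrightarrow} p_e$ in $\cW$, I want to show it splits. Complete $\sigma$ with a $d$-kernel in $\cF$ consisting of objects of $\cW$ (Definition \ref{def:wide}(i)), obtaining
\[
  0 \longrightarrow w_d \longrightarrow \cdots \longrightarrow w_1 \longrightarrow w \stackrel{\sigma}{\longrightarrow} p_e \longrightarrow 0,
\]
which by Proposition \ref{pro:W_is_n-abelian}(ii) and axiom (A2) is a $d$-exact sequence in $\cM$ (hence in $\mod(\Phi)$); here I use that $\sigma$ being an epimorphism in $\cW$ is an epimorphism in $\cF$, hence surjective, by Proposition \ref{pro:mono_epi}(ii) together with $\cF = \mod(\Phi)$ having surjections as epimorphisms — actually one needs the analogue of Proposition \ref{pro:mono_epi} relating epimorphisms in $\cF$ to surjections in $\mod(\Phi)$, which holds since $\cF$ is $d$-cluster tilting, or one observes directly that $d$-cokernels in $\cF$ give exact sequences in $\mod(\Phi)$ by \cite[prop.\ 3.18]{J} and its dual. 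This $d$-exact sequence represents a class in $\Ext_\Phi^d(p_e, w_d)$, and since $w_d \in \cW$, the hypothesis $\Ext_\Phi^d(p_e,\cW) = 0$ forces the class to be trivial. A split $d$-exact sequence splits off its rightmost map, so $\sigma$ splits, giving $p_e \in P_0(\cW)$.

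For the inclusion $P_0(\cW) \subseteq P(\cW)$, take $p_0 \in P_0(\cW)$ and an epimorphism $w_1 \stackrel{\varphi}{\longrightarrow} w_0$ in $\cW$; I must show $\cW(p_0, w_1) \to \cW(p_0, w_0)$ is surjective. Given $p_0 \stackrel{\alpha}{\longrightarrow} w_0$, form the pullback-type construction: using projective generation (Proposition \ref{pro:P_and_s}(i) / Proposition \ref{pro:s_resolution}), pick an epimorphism $p \to w_1$ in $\cW$ with $p \in P(\cW)$; then lift $\alpha$ along the composite $p \to w_1 \to w_0$ — but this requires $p$ to be categorically projective, which it is — and then pull back along $w_1 \to w_0$, reducing to showing that a surjection onto $p_0$ from an object of $\cW$ admits a section, which is exactly splitting projectivity. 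The main obstacle I anticipate is the careful bookkeeping around which maps in $\cW$ correspond to surjections in $\mod(\Phi)$ and which $d$-exact sequences in $\cW$ remain $d$-exact in $\mod(\Phi)$; Propositions \ref{pro:mono_epi} and \ref{pro:W_is_n-abelian} were proved precisely to handle this, so the argument should be a matter of assembling them correctly rather than inventing new machinery. A secondary subtlety is verifying the hypothesis of Proposition \ref{pro:s_resolution} without circularity, i.e.\ establishing that $\cW$ is projectively generated using only what precedes this proposition.
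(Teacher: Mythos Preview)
Your approach has two genuine gaps.

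\textbf{First}, the argument for $P_0(\cW) \subseteq P(\cW)$ leans on projective generation of $\cW$ via Propositions \ref{pro:P_and_s} and \ref{pro:s_resolution}. Those results assume that the inclusion $\cW \hookrightarrow \cF$ has a left adjoint, equivalently that $\cW$ is enveloping in $\cF$. Proposition \ref{pro:projectives} makes no such hypothesis, and a wide subcategory need not be functorially finite. You acknowledge the difficulty but do not resolve it; the suggested fallback (``argue directly that $\cW$ is projectively generated'') is not available in general. Even granting projective generation, the ``pullback-type construction'' you sketch does not exist in a $d$-abelian category, and the lifting you describe (lifting $\alpha:p_0\to w_0$ through $p\to w_0$) requires $p_0$, not $p$, to be categorically projective---which is precisely what you are trying to prove.

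\textbf{Second}, in your argument for $P_e(\cW) \subseteq P_0(\cW)$ you pass from ``the class of the $d$-exact sequence in $\Ext^d_\Phi(p_e,w_d)$ is zero'' to ``the sequence is split, so $\sigma$ splits''. For $d\geqslant 2$ a Yoneda-trivial $d$-extension is not split in general; Yoneda equivalence to the trivial extension does not force the rightmost map to admit a section. The implication you want is true here, but only because of the long exact sequence of \cite[prop.\ 2.2]{JK} (the connecting map $\Hom_\Phi(p_e,p_e)\to\Ext^d_\Phi(p_e,w_d)$ sends $\id_{p_e}$ to the class), and that is exactly the mechanism the paper uses---for a different inclusion.

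The paper sidesteps both problems by running the cycle in a different order: $P(\cW)\subseteq P_0(\cW)\subseteq P_e(\cW)\subseteq P(\cW)$. For $P_0(\cW)\subseteq P_e(\cW)$ one takes an arbitrary $d$-extension representing a class in $\Ext^d_\Phi(p_0,w)$, replaces it via \cite[prop.\ A.1]{I3} and Definition \ref{def:wide}(iii) by a Yoneda-equivalent one whose middle terms lie in $\cW$, and then splitting projectivity of $p_0$ splits the rightmost surjection, killing the class. For $P_e(\cW)\subseteq P(\cW)$ one completes an epimorphism $w_1\to w_0$ in $\cW$ to a $d$-exact sequence in $\cW$, notes this is $d$-exact in $\cF$ by Proposition \ref{pro:W_is_n-abelian}(ii) hence exact in $\mod(\Phi)$, and applies the long exact sequence of \cite[prop.\ 2.2]{JK} to see that $\Hom_\Phi(p_e,w_1)\to\Hom_\Phi(p_e,w_0)$ is surjective since the next term $\Ext^d_\Phi(p_e,w_{d+1})$ vanishes. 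No projective generation, no pullbacks, no functorial finiteness.
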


\begin{proof}
$P( \cW ) \subseteq P_0( \cW )$: This is clear from Definition
\ref{def:projectives}.

$P_0( \cW ) \subseteq P_e( \cW )$: Let $p_0 \in P_0( \cW )$ and $w \in
\cW$ be given and consider a $d$-extension
\[
  0
  \longrightarrow w
  \longrightarrow w_d
  \longrightarrow \cdots
  \longrightarrow w_1
  \stackrel{ \omega_1 }{ \longrightarrow } p_0
  \longrightarrow 0
\]
representing an element $\varepsilon \in \Ext_{ \Phi }^d( p_0,w )$.
By \cite[prop.\ A.1]{I3} we can assume $w_i \in \cF$ for
each $i$.  Hence by Definition \ref{def:wide}(iii) we can assume $w_i
\in \cW$ for each $i$.  But $w_1 \in \cW$ and $p_0 \in P_0( \cW )$
imply that $\omega_1$ is split whence $\varepsilon = 0$, and $\Ext_{
  \Phi }^d( p_0,\cW ) = 0$ follows.

$P_e( \cW ) \subseteq P( \cW )$: Let $p_e \in P_e( \cW )$ be given.
Let $w_1 \stackrel{ \omega_1 }{ \longrightarrow } w_0$ be an
epimorphism in $\cW$.  Proposition \ref{pro:W_is_n-abelian}(i) says
that $\cW$ is $d$-abelian, so we can complete with an $d$-kernel in $\cW$ to get a $d$-exact sequence in $\cW$,
\[
  0
  \longrightarrow w_{ d+1 }
  \longrightarrow w_d
  \longrightarrow \cdots
  \longrightarrow w_2
  \longrightarrow w_1
  \stackrel{ \omega_1 }{ \longrightarrow } w_0
  \longrightarrow 0.
\]
By Proposition \ref{pro:W_is_n-abelian}(ii) this is a $d$-exact
sequence in $\cF$, so it is an exact sequence in $\mod( \Phi )$ by
\cite[lem.\ 6.1]{Jo}.  Hence \cite[prop.\ 2.2]{JK} gives a long exact
sequence containing
\[
  \Hom_{ \Phi }( p_e,w_1 )
  \stackrel{ ( \omega_1 )_{ \astsmall } }{ \longrightarrow }
  \Hom_{ \Phi }( p_e,w_0 )
  \longrightarrow
  \Ext_{ \Phi }^d( p_e,w_{ d+1 } ).
\]
The last term is $0$ since $p_e \in P_e( \cW )$ and $w_{ d+1 } \in
\cW$, so $( \omega_1 )_{ \astsmall }$ is surjective which shows $p_e
\in P( \cW )$.   
\end{proof}

\begin{Definition}
[Projectively generated $d$-abelian categories, see {\cite[def.\ 3.19]{J}}]
\label{def:projectively_generated}
A $d$-abelian category $\cM$ is called {\em projectively generated} if each $m \in \cM$ permits an epimorphism $p \rightarrow m$ where $p$ is categorically projective in $\cM$.
\hfill $\Box$
\end{Definition}

\begin{Proposition}
\label{pro:P_and_s}
Let $( \Phi,\cF )$ be a $d$-homological pair and $\cW \subseteq \cF$ a
wide subcategory.  Note that $\cW$ is a $d$-abelian category by
Proposition \ref{pro:W_is_n-abelian}(i).

Assume that the inclusion functor
$\cW \stackrel{ i_{ \astsmall } }{ \longrightarrow } \cF$ has a left
adjoint $\cF \stackrel{ i^{ \astsmall } }{ \longrightarrow } \cW$.
Set $s = i^{ \ast }( \Phi_{ \Phi } )$.  Then:
\begin{enumerate}
\setlength\itemsep{4pt}

  \item  $\cW$ is projectively generated.

  \item  $P( \cW ) = \add( s )$.

\end{enumerate}
\end{Proposition}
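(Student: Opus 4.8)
The whole argument runs through the adjunction $i^{\ast}\dashv i_{\ast}$ together with one observation about epimorphisms in $\cF$. First I record the formal facts. Since $\Ext_{\Phi}^{\geqslant 1}(\Phi,-)=0$, the module $\Phi_{\Phi}$ satisfies the second description of $\cF$ in Definition \ref{def:d-cluster_tilting}, hence $\Phi_{\Phi}\in\cF$; so $s=i^{\ast}(\Phi_{\Phi})$ is defined, lies in $\cW$, and $\add(s)\subseteq\cW$. As $i_{\ast}$ is the fully faithful inclusion, the counit $i^{\ast}i_{\ast}\to\id_{\cW}$ is invertible, so $i^{\ast}(w)\cong w$ for $w\in\cW$; and as $i^{\ast}$ is a left adjoint it is additive and preserves finite direct sums, so $i^{\ast}(\Phi^{n})\cong s^{n}$ for all $n$. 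Composing the adjunction isomorphism with the canonical isomorphism between $\Hom_{\Phi}(\Phi_{\Phi},-)$ and the underlying-abelian-group functor gives a natural isomorphism
\[
  \cW(s,-)=\cW\big(i^{\ast}(\Phi),-\big)\cong\cF\big(\Phi,i_{\ast}(-)\big)=\Hom_{\Phi}(\Phi_{\Phi},-),
\]
under which, for any morphism $\varphi$ of $\cW$, the map $\cW(s,\varphi)$ becomes $\varphi$ itself.

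The extra ingredient I will need is: \emph{every epimorphism in $\cF$ is a surjection of $\Phi$-modules}. To see this, let $f_{1}\stackrel{\varphi}{\longrightarrow}f_{0}$ be an epimorphism in $\cF$, put $C=\Cok(\varphi)$ in $\mod(\Phi)$, and embed $C$ into an injective $\Phi$-module $E$; then $E\in\cF$, again by Definition \ref{def:d-cluster_tilting}. The composite $f_{0}\to C\hookrightarrow E$ is annihilated by $\varphi$, hence is zero since $\varphi$ is an epimorphism in $\cF$ and $E\in\cF$; as $C\hookrightarrow E$ is a monomorphism and $f_{0}\twoheadrightarrow C$ is surjective, this forces $C=0$. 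By Proposition \ref{pro:mono_epi}(ii) the same conclusion holds for epimorphisms in $\cW$.

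With these in hand I would argue in three steps. For $\add(s)\subseteq P(\cW)$: since $P(\cW)$ is closed under sums and summands it suffices to show $s\in P(\cW)$, and if $w_{1}\stackrel{\varphi}{\longrightarrow}w_{0}$ is an epimorphism in $\cW$ then it is a surjection of modules by the previous paragraph, so $\Hom_{\Phi}(\Phi,\varphi)$ is surjective ($\Phi$ being projective), so $\cW(s,\varphi)$ is surjective by the identification above; this is precisely categorical projectivity of $s$ in the sense of Definition \ref{def:projectives}. For (i): given $w\in\cW$, choose a surjection $\pi\colon\Phi^{n}\twoheadrightarrow w$ in $\mod(\Phi)$ (note $\Phi^{n}\in\cF$), and let $\widetilde{\pi}\colon s^{n}\cong i^{\ast}(\Phi^{n})\to i^{\ast}(w)\cong w$ be the morphism corresponding to $\pi$ under the adjunction. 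By the definition of the adjunct, $i_{\ast}(\widetilde{\pi})\circ\eta_{\Phi^{n}}=\pi$, so $i_{\ast}(\widetilde{\pi})$ is surjective, so $\widetilde{\pi}$ is an epimorphism in $\cW$ by Proposition \ref{pro:mono_epi}(ii); since its source $s^{n}$ lies in $\add(s)\subseteq P(\cW)$, every object of $\cW$ receives an epimorphism from a categorically projective object, i.e.\ $\cW$ is projectively generated. For $P(\cW)\subseteq\add(s)$: given $p\in P(\cW)$, the construction in (i) produces an epimorphism $\psi\colon s^{n}\to p$ in $\cW$, and categorical projectivity of $p$ allows us to lift $\id_{p}$ through $\cW(p,s^{n})\to\cW(p,p)$ to a section of $\psi$; hence $p$ is a direct summand of $s^{n}$, so $p\in\add(s)$.

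The adjunction bookkeeping of the first paragraph is routine, and the three steps above are short once the setup is in place. The one genuinely non-formal point, and the step I expect to be the crux, is the second paragraph: that epimorphisms in $\cW$ are honest surjections of $\Phi$-modules. This is precisely where the hypothesis that $\cF$ is $d$-cluster tilting is used---only through the fact that $\cF$ contains all injective $\Phi$-modules---and it is what makes $s=i^{\ast}(\Phi_{\Phi})$ behave as a projective generator of $\cW$.
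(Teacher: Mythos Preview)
Your proof is correct and follows essentially the same strategy as the paper's: show that $s$ is categorically projective in $\cW$, then that every $w\in\cW$ admits an epimorphism from some $s^{n}$, and conclude by splitting. The paper compresses the argument by invoking the general facts that a left adjoint preserves epimorphisms and categorically projective objects, and then asserts that $s=i^{\ast}(\Phi_{\Phi})$ is categorically projective in $\cW$; you instead work directly with the adjunction isomorphism $\cW(s,-)\cong\Hom_{\Phi}(\Phi,i_{\ast}(-))$ and the unit equation. The one point you make explicit that the paper leaves tacit is that every epimorphism in $\cF$ (hence in $\cW$) is a surjection of $\Phi$-modules---this is exactly what is needed for $\Phi_{\Phi}$ to be categorically projective in $\cF$, so the two arguments rest on the same fact, but your injective-embedding proof of it is a welcome addition. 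One small remark: in your step for (i), once you know $i_{\ast}(\widetilde{\pi})$ is surjective it is automatically an epimorphism in the full subcategory $\cW$, so the appeal to Proposition~\ref{pro:mono_epi}(ii) there is only for the easy direction and could be omitted.
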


\begin{proof}
Observe that $i^{ \ast }i_{ \ast }$ is equivalent to the identity on
$\cW$ (see \cite[p.\ 91]{MacLane}), and that
$i^{ \ast }$ preserves epimorphisms (see \cite[p.\ 119]{MacLane}) and categorically projective objects (the dual
of the proof of \cite[lem.\ 2.7]{Po} applies).  In particular,
$s = i^{ \ast }( \Phi_{ \Phi } )$ is a categorically projective object
in $\cW$.

Let $w \in \cW$ be given.  There is a surjection
$\Phi_{ \Phi }^n \rightarrow i_{ \ast }w$.  It is an epimorphism in
$\cF$, so
$i^{ \ast }( \Phi_{ \Phi }^n ) \rightarrow i^{ \ast }i_{ \ast }w$,
which is isomorphic to $s^n \rightarrow w$, is an epimorphism in $\cW$
with $s^n$ a categorically projective object in $\cW$.  This shows
(i).

For (ii) we show the two inclusions:

$\subseteq$:  Let $p \in P( \cW )$ be given.  Setting $w = p$ above
shows that there is an epimorphism $s^n \rightarrow p$.
It must split whence $p \in \add( s )$.

$\supseteq$:  Immediate because $s$ is a categorically projective
object in $\cW$.  
\end{proof}

\begin{Proposition}
\label{pro:s_resolution}
Let $( \Phi,\cF )$ be a $d$-homological pair and $\cW \subseteq \cF$ a
wide subcategory.  Note that $\cW$ is a $d$-abelian category by
Proposition \ref{pro:W_is_n-abelian}(i).

Assume that $\cW$ is projectively generated.  Then for each $w \in \cW$
there is a sequence
\begin{equation}
\label{equ:s-resolution}
  \cdots
  \rightarrow p_2
  \rightarrow p_1
  \rightarrow p_0
  \rightarrow w
  \rightarrow 0
\end{equation}
in $\mod( \Phi )$ with $p_i \in P( \cW )$ for each $i$, such that:
\begin{enumerate}
\setlength\itemsep{4pt}

  \item  The sequence is exact.

  \item  The sequence remains exact when we apply the functor $\Hom_{
      \Phi }( p,- )$ with $p \in P( \cW )$. 

\end{enumerate}
\end{Proposition}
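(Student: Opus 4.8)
The plan is to construct \eqref{equ:s-resolution} by induction, producing the projectives $p_i$ one at a time, each time recording a $d$-kernel that plays the role of a ``syzygy''. Two facts are used throughout. \textbf{(A)} Since $\cW$ is projectively generated, every object of $\cW$ receives an epimorphism in $\cW$ from an object of $P(\cW)$. \textbf{(B)} An epimorphism in $\cW$ is a surjection in $\mod(\Phi)$; and for any morphism $m_1 \rightarrow m_0$ in $\cW$, the $d$-kernel provided by Definition \ref{def:wide}(i) -- a diagram $L_d \rightarrow \cdots \rightarrow L_1$ with terms in $\cW$ -- has the property that its augmentation $0 \rightarrow L_d \rightarrow \cdots \rightarrow L_1 \rightarrow m_1 \rightarrow m_0$ is exact in $\mod(\Phi)$ at $L_d,\dots,L_1,m_1$, and remains exact at those positions after applying $\Hom_{\Phi}(p,-)$ for each $p \in P(\cW)$. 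For (B): the stated $d$-kernel is a $d$-kernel of $m_1 \rightarrow m_0$ in the $d$-cluster tilting subcategory $\cF$, so its augmentation is exact in $\mod(\Phi)$ by \cite[prop.\ 2.6]{JK} (exactly as invoked in the proof of Proposition \ref{pro:Proto_B}); since $\cW$ is full in $\cF$ it is also a $d$-kernel in $\cW$, which yields the $\Hom_{\Phi}(p,-)$-exactness for $p \in \cW$; and applying axiom (A2$'^{\opp}$) of Definition \ref{def:d-abelian} for the $d$-abelian category $\cW$ (Proposition \ref{pro:W_is_n-abelian}(i)) to an epimorphism $w_1 \rightarrow w_0$ produces a $d$-exact sequence in $\cW$, which is $d$-exact in $\cF$ by Proposition \ref{pro:W_is_n-abelian}(ii) and hence exact in $\mod(\Phi)$, so that $w_1 \rightarrow w_0$ is surjective.

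Now the induction. By (A), choose an epimorphism $\pi_0 : p_0 \rightarrow w$ in $\cW$ with $p_0 \in P(\cW)$; it is surjective in $\mod(\Phi)$ by (B). Suppose inductively that $\pi_n : p_n \rightarrow p_{n-1}$ has been constructed (with the convention $p_{-1} := w$, so $\pi_0$ is as above, and $p_n \in P(\cW)$ for $n\geqslant 0$) together with a $d$-kernel $L^{(n)}_d \rightarrow \cdots \rightarrow L^{(n)}_1 \rightarrow p_n$ of $\pi_n$ in $\cW$ with terms in $\cW$. Using (A), pick an epimorphism $\rho : p_{n+1} \rightarrow L^{(n)}_1$ in $\cW$ with $p_{n+1} \in P(\cW)$, and let $\pi_{n+1}$ be the composite $p_{n+1} \xrightarrow{\rho} L^{(n)}_1 \rightarrow p_n$. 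By (B), $\rho$ is surjective in $\mod(\Phi)$, so $\Image \pi_{n+1}$ equals $\Image\big(L^{(n)}_1 \rightarrow p_n\big)$, which is $\Ker \pi_n$ by the exactness in (B) of the augmented $d$-kernel $L^{(n)}_\bullet$; hence $p_{n+1} \xrightarrow{\pi_{n+1}} p_n \xrightarrow{\pi_n} p_{n-1}$ is exact in $\mod(\Phi)$. Finally, choose a $d$-kernel $L^{(n+1)}_d \rightarrow \cdots \rightarrow L^{(n+1)}_1 \rightarrow p_{n+1}$ of $\pi_{n+1}$ in $\cW$ with terms in $\cW$, which is possible since $\cW$ satisfies axiom (A1) (Proposition \ref{pro:W_is_n-abelian}(i), or directly Definition \ref{def:wide}(i)). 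This reproduces the inductive hypothesis for $n+1$, and iterating yields the sequence \eqref{equ:s-resolution} together with property (i).

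It remains to verify property (ii): that \eqref{equ:s-resolution} stays exact after $\Hom_{\Phi}(p,-)$ for $p \in P(\cW)$. Fix such a $p$ and an $n \geqslant 0$ (with $p_{-1} := w$). By (B) applied to the $d$-kernel $L^{(n)}_\bullet$ of $\pi_n$, the image of $\Hom_{\Phi}(p,L^{(n)}_1) \rightarrow \Hom_{\Phi}(p,p_n)$ equals $\Ker\big(\Hom_{\Phi}(p,p_n) \rightarrow \Hom_{\Phi}(p,p_{n-1})\big)$; and since $p$ is categorically projective in $\cW$ while $\rho : p_{n+1} \rightarrow L^{(n)}_1$ is an epimorphism in $\cW$, the map $\Hom_{\Phi}(p,p_{n+1}) \rightarrow \Hom_{\Phi}(p,L^{(n)}_1)$ is surjective. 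As $\pi_{n+1}$ factors through $\rho$, the image of $\Hom_{\Phi}(p,p_{n+1}) \rightarrow \Hom_{\Phi}(p,p_n)$ is therefore exactly $\Ker\big(\Hom_{\Phi}(p,p_n) \rightarrow \Hom_{\Phi}(p,p_{n-1})\big)$, giving the required exactness at $\Hom_{\Phi}(p,p_n)$ (for $n=0$ this is exactness at $\Hom_{\Phi}(p,p_0)$, and surjectivity of $\Hom_{\Phi}(p,p_0) \rightarrow \Hom_{\Phi}(p,w)$ holds because $\pi_0$ is an epimorphism in $\cW$ and $p$ is categorically projective). The point I expect to be most delicate is fact (B) -- the exactness in $\mod(\Phi)$ of an augmented $d$-kernel taken inside $\cW$ -- but this follows from \cite{JK} once one notes, via Definition \ref{def:wide}(i) and fullness of $\cW$ in $\cF$, that the $d$-kernel may be chosen in $\cF$; the rest is routine bookkeeping with the inductive invariant.
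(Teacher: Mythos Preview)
Your proof is correct and follows essentially the same approach as the paper: iteratively take a $d$-kernel (in $\cF$, with terms in $\cW$) of the current morphism, cover its rightmost term by an epimorphism from an object of $P(\cW)$, and compose to obtain the next map in the resolution. The paper packages this as a single reusable ``step'' lemma applied starting from $w \rightarrow 0$, while you phrase it as an explicit induction with named syzygy-complexes $L^{(n)}_\bullet$; and where the paper shows that epimorphisms in $\cW$ are surjective via Proposition~\ref{pro:mono_epi}(ii) together with $\Phi_\Phi \in \cF$, you instead invoke Proposition~\ref{pro:W_is_n-abelian}(ii) --- but these are cosmetic differences.
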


\begin{proof}
Conditions (i) and (ii) make sense for any sequence in $\mod( \Phi )$.  
We construct \eqref{equ:s-resolution} by starting with the morphism $w 
\rightarrow 0$ and using the following repeatedly: Given a morphism $w'
\stackrel{ \omega' }{ \longrightarrow } w''$ in $\cW$, there is a
sequence
\begin{equation}
\label{equ:s-resolution_step}
  p'
  \stackrel{ \pi' }{ \longrightarrow } w'
  \stackrel{ \omega' }{ \longrightarrow } w''
\end{equation}
in $\mod( \Phi )$ with $p' \in P( \cW )$, which satisfies conditions (i) and (ii).

To show this, complete $w' \stackrel{ \omega' }{
  \longrightarrow } w''$ with a $d$-kernel in $\cF$, 
\begin{equation}
\label{equ:n-kernel3}
  0
  \longrightarrow w_d
  \longrightarrow \cdots
  \longrightarrow w_1
  \stackrel{ \omega_1 }{ \longrightarrow } w'
  \stackrel{ \omega' }{ \longrightarrow } w'',
\end{equation}
where we can assume $w_i \in \cW$ for each $i$ by Definition
\ref{def:wide}(i).  Let
$p' \stackrel{ \pi_1 }{ \longrightarrow } w_1$ be an epimorphism in
$\cW$ with $p' \in P( \cW )$.  Let $\pi'$ be the composition
$p' \stackrel{ \pi_1 }{ \longrightarrow } w_1 \stackrel{ \omega_1 }{
  \longrightarrow } w'$.  Then \eqref{equ:s-resolution_step} satisfies conditions (i) and (ii):

(ii):  By the definition of $d$-kernels, \eqref{equ:n-kernel3} becomes
exact upon applying the functor $\Hom_{ \Phi }( f,- )$ for
$f \in \cF$.  In particular, the part
$w_1 \stackrel{ \omega_1 }{ \longrightarrow } w' \stackrel{ \omega' }{
  \longrightarrow } w''$ becomes exact upon applying
$\Hom_{ \Phi }( p,- )$ for $p \in P( \cW )$.  Moreover,
$p' \stackrel{ \pi_1 }{ \longrightarrow } w_1$ is an epimorphism in
$\cW$ so becomes surjective upon applying $\Hom_{ \Phi }( p,- )$.
This proves (ii).

(i):  This follows by replacing $p$ with $\Phi$ in the proof of
(ii).  The reason $p' \stackrel{ \pi_1 }{ \longrightarrow } w_1$
becomes surjective upon applying $\Hom_{ \Phi }( \Phi,- )$ is
that $\pi_1$ is an epimorphism in $\cF$ by Proposition
\ref{pro:mono_epi}(ii). 
\end{proof}

\section{Morphisms of $d$-homological pairs}
\label{sec:d-homological_pairs}

Recall that Definition \ref{def:d-homological_pair} introduced $d$-homological pairs.  This section defines morphisms of $d$-homological pairs and shows some properties.

\begin{Definition}
[Morphisms of $d$-homological pairs]
\label{def:morphisms_of_d-homological_pairs}
A {\em morphism of $d$-homological pairs}
$( \Phi,\cF ) \stackrel{ \phi }{ \longrightarrow } ( \Gamma,\cG )$ is
an algebra homomorphism
$\Phi \stackrel{ \phi }{ \longrightarrow } \Gamma$ satisfying
$\phi_{ \ast }( \cG ) \subseteq \cF$, where
$\phi_{ \ast } : \mod( \Gamma ) \rightarrow \mod( \Phi )$ is the
functor given by restriction of scalars.  Given such a $\phi$, there
is a functor 
\[
  \cG
  \stackrel{ \phi_{ \bullet } }{ \longrightarrow }
  \cF
\]
defined by $\phi_{\bullet} = \phi_{ \ast }|_{ \cG }$.

We say that $\phi$ is:
\begin{itemize}
\setlength\itemsep{4pt}

  \item  An {\em isomorphism of $d$-homological pairs} if the algebra
    homomorphism $\phi$ is bijective.  Then $\phi_{ \astsmall }$ is an equivalence, and so is $\phi_{ \bullet }$ since one $d$-cluster tilting subcategory cannot be strictly contained in another.

  \item  An {\em epimorphism of $d$-homological pairs} if the algebra
    homomorphism $\phi$ is an epimorphism in the category of rings.

  \item  {\em $d$-pseudoflat} if $\Tor_d^{ \Phi }( \Gamma,\Gamma ) = 0$.
\end{itemize}

A second morphism of $d$-homological pairs
$( \Phi,\cF ) \stackrel{ \widetilde{ \phi } }{ \longrightarrow } (
\widetilde{ \Gamma },\widetilde{ \cG } )$ is {\em equivalent} to
$\phi$ if there is an isomorphism of $d$-homological pairs
$( \widetilde{ \Gamma },\widetilde{ \cG } ) \stackrel{ \widetilde{
    \gamma } }{ \longrightarrow } ( \Gamma,\cG )$ such that
$\widetilde{ \gamma }\widetilde{ \phi } = \phi$ as algebra
homomorphisms.  \hfill $\Box$
\end{Definition}

\begin{Remark}
\label{rmk:epimorphism}
Let $( \Phi,\cF ) \stackrel{ \phi }{ \rightarrow } ( \Gamma,\cG )$ be
a morphism of $d$-homological pairs.  Then the following are
equivalent:
\begin{enumerate}
\setlength\itemsep{4pt}

  \item  $\phi$ is an epimorphism of $d$-homological pairs.

  \item  The functor
    $\mod( \Gamma ) \stackrel{ \phi_{ \astsmall } }{ \longrightarrow } \mod( \Phi
    )$ is full and faithful.

  \item  The multiplication map $\Gamma \underset{ \Phi }{ \otimes } \Gamma 
\rightarrow \Gamma$ is bijective.

\end{enumerate}
Indeed, these properties only concern the algebra homomorphism
$\Phi \stackrel{ \phi }{ \rightarrow } \Gamma$ and the module
categories $\mod( \Phi )$ and $\mod( \Gamma )$, and they are
equivalent by \cite[prop.\ 1.1]{Si} and \cite[prop.\ 1.2]{St}.
\hfill $\Box$
\end{Remark}

\begin{Remark}
[Functors associated to morphisms]
\label{rmk:adjoints}
Let $\Phi \stackrel{ \phi }{ \rightarrow } \Gamma$ be an algebra
homomorphism.  The restriction of scalars functor $\phi_{ \ast }$ is
exact.  It satisfies
\[
  \phi_{ \ast }( - )
  \cong \Hom_{ \Gamma }( \Gamma,- )
  \cong - \underset{ \Gamma }{ \otimes } \Gamma
\]
so $\phi_{ \ast }$ has left and right adjoint functors
\[
  \phi^{ \ast }(-) = - \underset{ \Phi }{ \otimes }\Gamma
  \;\;,\;\;
  \phi^!(-) = \Hom_{ \Phi }( \Gamma,- )
\]
and we get a diagram
\[
\tag*{$\Box$}
\vcenter{
\xymatrix
{
\mod( \Gamma )
    \ar[rrr]^{ \phi_{ \astsmall }(-) } &&&
  \mod( \Phi ).
    \ar@/_2.0pc/[lll]_{ \phi^{ \astsmall }(-) }
    \ar@/^2.0pc/[lll]^{ \phi^!(-) }
}
        }
\]
\end{Remark}

\begin{Remark}
\label{rmk:RHom_Ext}
Recall that if $u$, $v$, $x$, $y$ are modules, then
\[
\tag*{$\Box$}
  \H^i\!\RHom( u,v ) \cong \Ext^i( u,v )
  \;\; \mbox{and} \;\;
  \H_i( x \LTensor{} y ) \cong \Tor_i( x,y ).
\]
\end{Remark}

\begin{Proposition}
\label{pro:adjoints}
Let $( \Phi,\cF ) \stackrel{ \phi }{ \rightarrow } ( \Gamma,\cG )$ be
a morphism of $d$-homological pairs.  Then $\phi^{ \ast }$,
$\phi_{ \ast }$, $\phi^!$ restrict to functors
\[
\xymatrix
{
\cG
    \ar[rrr]^{ \phi_{ \bullet } } &&&
  \cF
    \ar@/_1.5pc/[lll]_{ \phi^{ \bullet } } 
    \ar@/^1.5pc/[lll]^{ \phi^? }
}
\]
where each is a left adjoint of the one below it.
\end{Proposition}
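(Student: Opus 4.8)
The plan is to deduce the Proposition from the module-level adjoint triple $\phi^{\ast}\dashv\phi_{\ast}\dashv\phi^{!}$ of Remark~\ref{rmk:adjoints} together with the two containments $\phi^{!}(\cF)\subseteq\cG$ and $\phi^{\ast}(\cF)\subseteq\cG$. By Definition~\ref{def:morphisms_of_d-homological_pairs} the functor $\phi_{\ast}$ already restricts to $\phi_{\bullet}=\phi_{\ast}|_{\cG}\colon\cG\to\cF$, so these two containments are all that is missing: once they are known, one sets $\phi^{?}:=\phi^{\ast}|_{\cF}$ and $\phi^{\bullet}:=\phi^{!}|_{\cF}$, and since an adjunction between two categories restricts to an adjunction between full subcategories as soon as both functors carry those subcategories into one another — the unit and counit, hence all naturality, being inherited unchanged — the module-level adjunctions $\phi^{\ast}\dashv\phi_{\ast}$ and $\phi_{\ast}\dashv\phi^{!}$ restrict to $\phi^{?}\dashv\phi_{\bullet}$ and $\phi_{\bullet}\dashv\phi^{\bullet}$, which is exactly the assertion.

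To prove $\phi^{!}(\cF)\subseteq\cG$ I would fix $f\in\cF$ and begin from the observation that $\Gamma$, viewed as a right $\Phi$-module via $\phi$, equals $\phi_{\ast}(\Gamma_{\Gamma})$; as $\Gamma_{\Gamma}$ is projective it lies in $\cG$, so $\Gamma_{\Phi}=\phi_{\ast}(\Gamma_{\Gamma})\in\phi_{\ast}(\cG)\subseteq\cF$. Since $\cF$ is $d$-cluster tilting and $f,\Gamma_{\Phi}\in\cF$, this forces $\Ext_{\Phi}^{q}(\Gamma_{\Phi},f)=0$ for $1\leqslant q\leqslant d-1$, and likewise $\Ext_{\Phi}^{n}(\phi_{\ast}g,f)=0$ for every $g\in\cG$ and $1\leqslant n\leqslant d-1$ because $\phi_{\ast}g\in\cF$ as well. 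Hence (Remark~\ref{rmk:RHom_Ext}) the complex $\R\phi^{!}f=\RHom_{\Phi}(\Gamma_{\Phi},f)$ has $0$th cohomology $\phi^{!}f$, zero cohomology in degrees $1,\ldots,d-1$, and nonzero cohomology only possibly in degrees $\geqslant d$; so there is a triangle $\phi^{!}f\to\R\phi^{!}f\to C$ with $C$ concentrated in cohomological degrees $\geqslant d$. Applying $\RHom_{\Gamma}(g,-)$, using the derived adjunction $\RHom_{\Gamma}(g,\R\phi^{!}f)\cong\RHom_{\Phi}(\phi_{\ast}g,f)$ (legitimate since $\phi_{\ast}$ is exact), and noting that $\RHom_{\Gamma}(g,C)$ is then also concentrated in degrees $\geqslant d$, one reads off $\Ext_{\Gamma}^{p}(g,\phi^{!}f)\cong\Ext_{\Phi}^{p}(\phi_{\ast}g,f)$ for $0\leqslant p\leqslant d-1$. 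In particular $\Ext_{\Gamma}^{p}(g,\phi^{!}f)=0$ for $1\leqslant p\leqslant d-1$, and since $g\in\cG$ is arbitrary, the description of $\cG$ in Definition~\ref{def:d-cluster_tilting} gives $\phi^{!}f\in\cG$.

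For $\phi^{\ast}(\cF)\subseteq\cG$, which is the dual statement, I would pass to opposite algebras. One checks that $\phi^{\opp}\colon\Phi^{\opp}\to\Gamma^{\opp}$ is again a morphism of $d$-homological pairs $(\Phi^{\opp},\dual\cF)\to(\Gamma^{\opp},\dual\cG)$: the categories $\dual\cF,\dual\cG$ are $d$-cluster tilting, and from the natural isomorphism $\dual\circ(\phi^{\opp})_{\ast}\cong\phi_{\ast}\circ\dual$ one gets $(\phi^{\opp})_{\ast}(\dual\cG)=\dual\phi_{\ast}(\cG)\subseteq\dual\cF$. A short computation — taking the right adjoint of $\dual\,\phi_{\ast}\,\dual$ and using that $\dual$ is a contravariant equivalence — identifies the right adjoint of $(\phi^{\opp})_{\ast}$ with $\dual\,\phi^{\ast}\,\dual$. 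Applying the containment just proved to $\phi^{\opp}$ then yields $\dual\,\phi^{\ast}(\cF)\subseteq\dual\cG$, i.e.\ $\phi^{\ast}(\cF)\subseteq\cG$. Spelled out, this is the evident parallel of the previous paragraph: the injective cogenerator $\dual\Gamma$ lies in $\cG$, so $\dual({}_{\Phi}\Gamma)=\phi_{\ast}(\dual\Gamma)\in\cF$ and hence $\Tor_{q}^{\Phi}(f,{}_{\Phi}\Gamma)\cong\dual\Ext_{\Phi}^{q}(f,\phi_{\ast}(\dual\Gamma))=0$ for $1\leqslant q\leqslant d-1$, after which a truncation argument on $f\LTensor{\Phi}\Gamma$ mirroring the one above gives $\Ext_{\Gamma}^{p}(\phi^{\ast}f,g)=0$ for $1\leqslant p\leqslant d-1$.

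The step I expect to be the main obstacle is exactly the containment $\phi^{!}(\cF)\subseteq\cG$ (equivalently $\phi^{\ast}(\cF)\subseteq\cG$). Its non-obvious ingredient is the realisation that $\Gamma_{\Phi}$ — and dually $\phi_{\ast}(\dual\Gamma)$ — already belongs to $\cF$; once that is in hand, the $d$-cluster tilting vanishing of $\Ext^{1},\ldots,\Ext^{d-1}$ inside $\cF$ does all the remaining work. The delicate point is that the cohomology of $\R\phi^{!}f$ in degrees $\geqslant d$ must not leak into $\Ext_{\Gamma}^{p}(g,\phi^{!}f)$ for $p\leqslant d-1$; this is precisely why truncating at degree $d$ suffices, and it is the place where the argument really uses that $\cF$ and $\cG$ are $d$-cluster tilting rather than merely functorially finite.
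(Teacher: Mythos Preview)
Your argument is correct and follows essentially the same route as the paper's: both prove the two containments $\phi^{!}(\cF)\subseteq\cG$ and $\phi^{\ast}(\cF)\subseteq\cG$ via the truncation triangle on $\RHom_{\Phi}(\Gamma,f)$ (respectively its tensor analogue), the derived adjunction, and the $d$-cluster tilting vanishing. Two small remarks: (1) you have swapped the names $\phi^{\bullet}$ and $\phi^{?}$ relative to the diagram --- in the Proposition ``each is a left adjoint of the one below it'', so $\phi^{\bullet}=\phi^{\ast}|_{\cF}$ is the left adjoint and $\phi^{?}=\phi^{!}|_{\cF}$ is the right adjoint; (2) where the paper simply says ``a variation of this method'' for $\phi^{\ast}(\cF)\subseteq\cG$, you instead pass to opposite algebras and invoke the identification $(\phi^{\opp})^{!}\cong\dual\,\phi^{\ast}\,\dual$, which is a clean way to make that variation precise.
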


\begin{proof}
We have $\phi_{ \ast }( \cG ) \subseteq \cF$ by definition, and it is clearly enough to show
\begin{equation}
\label{equ:inclusions}
  \phi^{ \ast }( \cF ) \subseteq \cG
  \;\;,\;\;
  \phi^!( \cF ) \subseteq \cG.
\end{equation}
Observe that if $f \in \cF$ and $g \in \cG$, then
$\phi_{ \ast }( g ) \in \cF$ whence
\begin{equation}
\label{equ:RHom_gf}
  \H^i\!\RHom_{ \Phi }( \phi_{ \ast }g,f ) = 0
  \;\;\mbox{for}\;\;
  i \in \{ 1, \ldots, d-1 \}
\end{equation}
by Remark \ref{rmk:RHom_Ext}.

The second inclusion in \eqref{equ:inclusions} can be proved as
follows: Let $f \in \cF$ be given.  Then Remark \ref{rmk:RHom_Ext} implies
$\H^i\!\RHom_{ \Phi }( \Gamma,f ) = 0$ for $i \leqslant -1$ and
$\H^0\!\RHom_{ \Phi }( \Gamma,f ) \cong \Hom_{ \Phi }( \Gamma,f )$, so
there is a triangle
\begin{equation}
\label{equ:RHom_triangle}
  \Hom_{ \Phi }( \Gamma,f )
  \rightarrow \RHom_{ \Phi }( \Gamma,f )
  \rightarrow x
  \rightarrow 
\end{equation}
in $\cD( \Gamma )$ where $\H^i\!x = 0$ for $i \leqslant 1$.  In fact, we even have
\begin{equation}
\label{equ:H_concentrated}
  \H^i\!x = 0
  \;\;\mbox{for}\;\;
  i \leqslant d-1.
\end{equation}
To see this, note that $\H^i\!\RHom_{ \Phi }( \Gamma,f ) = 0$ for
$i = 1, \ldots, d-1$ by \eqref{equ:RHom_gf} with
$g = \Gamma_{ \Gamma }$ and use the long exact cohomology sequence
of \eqref{equ:RHom_triangle}.

Now let $g \in \cG$ be given.  Applying $\RHom_{ \Gamma }( g,- )$ to
\eqref{equ:RHom_triangle} gives a triangle in $\cD( k )$,
\begin{equation}
\label{equ:RHom_RHom_triangle}
  \RHom_{ \Gamma }\!\big( g,\Hom_{ \Phi }( \Gamma,f ) \big)
  \rightarrow \RHom_{ \Gamma }\!\big( g,\RHom_{ \Phi }( \Gamma,f ) \big)
  \rightarrow \RHom_{ \Gamma }( g,x )
  \rightarrow,
\end{equation}
where the first term is
$\RHom_{ \Gamma }( g,\phi^!f )$.  The second
term can be rewritten using adjointness of $\LTensor{ \Gamma }$ and
$\RHom_{ \Phi }$:
\[
  \RHom_{ \Gamma }\!\big( g,\RHom_{ \Phi }( \Gamma,f ) \big)
  \cong \RHom_{ \Phi }( g \LTensor{ \Gamma } \Gamma,f )
  \cong \RHom_{ \Phi }( \phi_{ \ast }g,f ).
\]
Hence \eqref{equ:RHom_RHom_triangle} is isomorphic to the triangle
\begin{equation}
\label{equ:RHom_RHom_triangle2}
  \RHom_{ \Gamma }( g,\phi^!f )
  \rightarrow \RHom_{ \Phi }( \phi_{ \ast }g,f )
  \rightarrow \RHom_{ \Gamma }( g,x )
  \rightarrow.
\end{equation}

The cohomology of the second term satisfies \eqref{equ:RHom_gf}.  The
cohomology of the third term satisfies
$\H^i\! \RHom_{ \Gamma }( g,x ) = 0$ for $i \leqslant d-1$ because of
\eqref{equ:H_concentrated}.  Hence the long exact sequence of
\eqref{equ:RHom_RHom_triangle2} shows that the cohomology of the first
term satisfies
\[
  \H^i\!\RHom_{ \Gamma }( g,\phi^!f ) = 0
  \;\;\mbox{for}\;\;
  i = 1, \ldots, d-1.
\]
By Remark \ref{rmk:RHom_Ext} this shows $\phi^!f \in \cG$ as
desired, since $g \in \cG$ is arbitrary.

A variation of this method proves the first inclusion in
\eqref{equ:inclusions}. 
\end{proof}

\begin{Proposition}
\label{pro:epimorphism}
A morphism of $d$-homological pairs
$( \Phi,\cF ) \stackrel{ \phi }{ \longrightarrow } ( \Gamma,\cG )$ is an
epimorphism of $d$-homological pairs if and only if the functor
$\cG \stackrel{ \phi_{ \bullet } }{ \longrightarrow } \cF$ is full and
faithful.
\end{Proposition}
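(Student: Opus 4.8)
The plan is to invoke Remark \ref{rmk:epimorphism}, which identifies ``$\phi$ is an epimorphism of $d$-homological pairs'' with ``$\phi_{\ast}\colon\mod(\Gamma)\to\mod(\Phi)$ is full and faithful''. One direction is then immediate: if $\phi_{\ast}$ is full and faithful, so is its restriction $\phi_{\bullet}=\phi_{\ast}|_{\cG}$, since $\cG\subseteq\mod(\Gamma)$ and $\cF\subseteq\mod(\Phi)$ are full subcategories. The work is the converse, so I would assume $\phi_{\bullet}\colon\cG\to\cF$ is full and faithful and show that the restriction-of-scalars map $\Hom_{\Gamma}(n'',n')\to\Hom_{\Phi}(n'',n')$ is bijective for all $n',n''\in\mod(\Gamma)$. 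This I would establish by a two-step d\'evissage using the short $\cG$-resolutions of Lemma \ref{lem:resolution} applied to the $d$-homological pair $(\Gamma,\cG)$.

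\emph{Step 1.} First I would show that $\Hom_{\Gamma}(n'',g)\to\Hom_{\Phi}(n'',g)$ is bijective for every $n''\in\mod(\Gamma)$ and every $g\in\cG$. By Lemma \ref{lem:resolution} there is an exact sequence $0\to g_{d-1}\to\cdots\to g_{1}\to g_{0}\to n''\to 0$ in $\mod(\Gamma)$ with $g_{i}\in\cG$; applying the exact functor $\phi_{\ast}$ (Remark \ref{rmk:adjoints}) shows it is also exact in $\mod(\Phi)$. Applying the left exact contravariant functors $\Hom_{\Gamma}(-,g)$ and $\Hom_{\Phi}(-,g)$ to the right exact piece $g_{1}\to g_{0}\to n''\to 0$ presents $\Hom_{\Gamma}(n'',g)$ and $\Hom_{\Phi}(n'',g)$ as the kernels of $\Hom_{\Gamma}(g_{0},g)\to\Hom_{\Gamma}(g_{1},g)$ and $\Hom_{\Phi}(g_{0},g)\to\Hom_{\Phi}(g_{1},g)$ respectively. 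Since $g_{0},g_{1},g\in\cG$ and $\phi_{\bullet}$ is full and faithful, the restriction maps $\Hom_{\Gamma}(g_{i},g)\to\Hom_{\Phi}(g_{i},g)$ are bijective, and being natural they commute with the displayed differentials; passing to kernels yields the claim. (For $d=1$ one simply has $n''\cong g_{0}\in\cG$.)

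\emph{Step 2.} Now fix $n',n''\in\mod(\Gamma)$. By the dual of Lemma \ref{lem:resolution} there is an exact sequence $0\to n'\to g^{0}\to g^{1}\to\cdots\to g^{d-1}\to 0$ in $\mod(\Gamma)$ with $g^{i}\in\cG$, again exact in $\mod(\Phi)$ after $\phi_{\ast}$. Applying the left exact functors $\Hom_{\Gamma}(n'',-)$ and $\Hom_{\Phi}(n'',-)$ to $0\to n'\to g^{0}\to g^{1}$ presents $\Hom_{\Gamma}(n'',n')$ and $\Hom_{\Phi}(n'',n')$ as the kernels of $\Hom_{\Gamma}(n'',g^{0})\to\Hom_{\Gamma}(n'',g^{1})$ and $\Hom_{\Phi}(n'',g^{0})\to\Hom_{\Phi}(n'',g^{1})$. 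By Step 1 the restriction maps $\Hom_{\Gamma}(n'',g^{i})\to\Hom_{\Phi}(n'',g^{i})$ are bijective and natural; passing to kernels shows $\Hom_{\Gamma}(n'',n')\to\Hom_{\Phi}(n'',n')$ is bijective. Hence $\phi_{\ast}$ is full and faithful, and Remark \ref{rmk:epimorphism} completes the proof.

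I do not expect a genuine obstacle here; the argument is a straightforward two-step reduction. The only points requiring a little care are that the $\cG$-resolutions of Lemma \ref{lem:resolution} are honestly exact sequences (which holds because $\Gamma_{\Gamma}$, being projective, lies in the $d$-cluster tilting subcategory $\cG$, so applying $\Hom_{\Gamma}(\Gamma,-)$ detects exactness), so that they remain exact after the exact functor $\phi_{\ast}$; and that the comparison isomorphisms on the $\cG$-terms are natural, so that they descend to an isomorphism on the kernels. Both are routine.
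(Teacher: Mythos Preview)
Your proof is correct. Both you and the paper begin from Remark~\ref{rmk:epimorphism} and use Lemma~\ref{lem:resolution}, but you organise the hard direction differently. The paper argues only for fullness of $\phi_{\ast}$ (faithfulness being immediate for restriction of scalars): it takes left $\cG$-resolutions of \emph{both} $n$ and $n'$, observes that under $\phi_{\ast}$ they become augmented left $\cF$-resolutions (second half of Lemma~\ref{lem:resolution}), lifts a given $\psi\colon \phi_{\ast}(n)\to\phi_{\ast}(n')$ to a chain map $(\psi_i)$ between these $\cF$-resolutions, writes each $\psi_i=\phi_{\bullet}(\gamma_i)$ by fullness of $\phi_{\bullet}$, checks the $(\gamma_i)$ form a chain map by faithfulness, and passes to cokernels to obtain $\gamma$ with $\phi_{\ast}(\gamma)=\psi$. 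Your two-step d\'evissage instead uses a left $\cG$-resolution of $n''$ and a right $\cG$-coresolution of $n'$, and compares Hom sets as kernels rather than lifting morphisms. The paper's route needs only one flavour of resolution plus the comparison lemma for relative resolutions; yours trades the comparison lemma for the dual of Lemma~\ref{lem:resolution} and a cleaner five-lemma-style argument. Both are of comparable length and difficulty. One small remark: your final paragraph justifies exactness only for the left resolution via $\Gamma_{\Gamma}\in\cG$; the coresolution in Step~2 is exact by the dual argument using $(\dual\Gamma)_{\Gamma}\in\cG$, which you might mention explicitly.
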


\begin{proof}
By Remark \ref{rmk:epimorphism}, we must show that
$\mod( \Gamma ) \stackrel{ \phi_{ \astsmall } }{ \longrightarrow } \mod( \Phi )$
is full and faithful if and only if so is
$\cG \stackrel{ \phi_{ \bullet } }{ \longrightarrow } \cF$.

On the one hand, if $\phi_{ \ast }$ is full and faithful, then so is
its restriction $\phi_{ \bullet }$.  On the other hand, assume that
$\phi_{ \bullet }$ is full and faithful.  Since $\phi_{ \ast }$ is
restriction of scalars, it is clearly faithful.  To see that it is
full, let a morphism
$\phi_{ \ast }( n ) \stackrel{ \psi }{ \rightarrow } \phi_{ \ast }( n'
)$ be given.  By Lemma \ref{lem:resolution} there are augmented
$\cG$-resolutions
\[
  \cdots \rightarrow 0 \rightarrow g_{ d-1 } \rightarrow \cdots \rightarrow g_0
  \rightarrow n \rightarrow 0
  \;\;,\;\;
  \cdots \rightarrow 0 \rightarrow g'_{ d-1 } \rightarrow \cdots \rightarrow g'_0
  \rightarrow n' \rightarrow 0.
\]
By Remark \ref{rmk:adjoints}, the functor $\phi_{ \ast }$ sends them to the exact sequences which form the top and bottom rows of the following diagram.
\[
\vcenter{
  \xymatrix {
    \cdots \ar[r]
      & 0 \ar[r] \ar[d]
      & \phi_{ \bullet }( g_{ d-1 } ) \ar[r] \ar^{ \psi_{ d-1 } }[d] 
      & \cdots \ar[r]
      & \phi_{ \bullet }( g_0 ) \ar[r] \ar^{ \psi_0 }[d] 
      & \phi_{ \ast }( n ) \ar[r] \ar[d]^{ \psi } & 0 \\ 
    \cdots \ar[r]
      & 0 \ar[r]
      & \phi_{ \bullet }( g'_{ d-1 } ) \ar[r]
      & \cdots \ar[r]
      & \phi_{ \bullet }( g'_0 ) \ar[r] 
      & \phi_{ \ast }( n' ) \ar[r] 
      & 0 \\ 
            }
        }
\]
Here $\psi$ lifts to the $\psi_i$ because the
$\phi_{ \bullet }( g_i )$ and $\phi_{ \bullet }( g'_i )$ are in $\cF$ by assumption, whence the top and bottom rows are augmented left $\cF$-resolutions of $\phi_{ \ast }( n )$ and $\phi_{ \ast }( n' )$ by Lemma \ref{lem:resolution}.  
We have $\psi_i = \phi_{ \bullet }( \gamma_i )$ for certain $\gamma_i$ because $\phi_{ \bullet }$ is full.  This gives a commutative diagram
\[
\vcenter{
  \xymatrix {
    \cdots \ar[r]
      & 0 \ar[r] \ar[d]
      & g_{ d-1 } \ar[r] \ar^{ \gamma_{ d-1 } }[d] 
      & \cdots \ar[r]
      & g_0 \ar[r] \ar^{ \gamma_0 }[d] 
      & n \ar[r] \ar[d]^{ \gamma } & 0 \\ 
    \cdots \ar[r]
      & 0 \ar[r]
      & g'_{ d-1 } \ar[r]
      & \cdots \ar[r]
      & g'_0 \ar[r] 
      & n' \ar[r] 
      & 0 \\ 
            }
        }
\]
where the $\gamma_i$ induce $\gamma$.  It follows that $\psi =
\varphi_{ \ast }( \gamma )$.
\end{proof}

\begin{Lemma}
\label{lem:intermediate_Tor_vanishing}
Let $( \Phi,\cF ) \stackrel{ \phi }{ \rightarrow } ( \Gamma,\cG )$ be
a morphism of $d$-homological pairs.  Then
$\Tor_i^{ \Phi }( \Gamma,\Gamma ) = 0$ for
$i \in \{ 1, \ldots, d-1 \}$.
\end{Lemma}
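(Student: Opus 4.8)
The plan is to reduce the statement to the $\Ext$-vanishing that is built into the definition of a $d$-cluster tilting subcategory, passing between $\Tor$ and $\Ext$ by means of $k$-duality.

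First I would dispose of the case $d=1$, where $\{1,\dots,d-1\}=\emptyset$ and there is nothing to prove, so from now on $d\geqslant 2$. Next I would record two membership facts. The right regular module $\Gamma_{\Gamma}$ is projective, hence $\Ext_{\Gamma}^{\geqslant 1}(\Gamma_{\Gamma},-)=0$, so $\Gamma_{\Gamma}$ belongs to $\cG$ by the second characterisation in Definition \ref{def:d-cluster_tilting}; since $\phi$ is a morphism of $d$-homological pairs, $\Gamma_{\Phi}=\phi_{\ast}(\Gamma_{\Gamma})\in\phi_{\ast}(\cG)\subseteq\cF$. Dually, the injective cogenerator $\dual(\Gamma)$ of $\mod(\Gamma)$ has $\Ext_{\Gamma}^{\geqslant 1}(-,\dual(\Gamma))=0$, hence lies in $\cG$ by the first characterisation in Definition \ref{def:d-cluster_tilting}; and a one-line check on right $\Phi$-actions shows that $\dual({}_{\Phi}\Gamma)$, the $k$-dual (in the sense of Setup \ref{set:blanket1}, which interchanges left and right modules) of $\Gamma$ regarded as a left $\Phi$-module via $\phi$, coincides with $\phi_{\ast}$ applied to that injective cogenerator. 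Therefore $\dual({}_{\Phi}\Gamma)\in\phi_{\ast}(\cG)\subseteq\cF$ as well.

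Then I would invoke the standard adjunction isomorphism $\dual\!\big(M\LTensor{\Phi}N\big)\cong\RHom_{\Phi}(M,\dual N)$, which is valid because $k$ is a field and hence $\dual(-)=\Hom_k(-,k)$ is exact, applied to $M=\Gamma_{\Phi}$ and $N={}_{\Phi}\Gamma$. Taking cohomology and using Remark \ref{rmk:RHom_Ext}, this reads $\dual\!\big(\Tor_i^{\Phi}(\Gamma,\Gamma)\big)\cong\Ext_{\Phi}^i\!\big(\Gamma_{\Phi},\dual({}_{\Phi}\Gamma)\big)$ for every $i\geqslant 1$. For $i\in\{1,\dots,d-1\}$ the right-hand side vanishes, since both arguments lie in the $d$-cluster tilting subcategory $\cF$ and $\Ext_{\Phi}^1(\cF,\cF)=\cdots=\Ext_{\Phi}^{d-1}(\cF,\cF)=0$ by Definition \ref{def:d-cluster_tilting}. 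As $\Tor_i^{\Phi}(\Gamma,\Gamma)$ is finite dimensional over $k$ and $\dual$ is faithful on such modules, this forces $\Tor_i^{\Phi}(\Gamma,\Gamma)=0$ for $i\in\{1,\dots,d-1\}$.

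The only mildly delicate point is the bimodule bookkeeping in the second paragraph, namely identifying $\dual({}_{\Phi}\Gamma)$ with $\phi_{\ast}$ of the injective cogenerator of $\mod(\Gamma)$; but this is a routine comparison of the two right $\Phi$-actions on the common underlying space $\Hom_k(\Gamma,k)$, and presents no real obstacle. Everything else is a formal consequence of the duality adjunction together with the defining $\Ext$-vanishing of $d$-cluster tilting subcategories.
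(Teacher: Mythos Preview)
Your proposal is correct and takes essentially the same approach as the paper: reduce $\Tor_i^{\Phi}(\Gamma,\Gamma)$ to $\Ext_{\Phi}^i(\Gamma_{\Phi},\dual\Gamma)$ via $k$-duality, then observe that both $\Gamma_{\Gamma}$ and $(\dual\Gamma)_{\Gamma}$ lie in $\cG$ (being projective and injective respectively), so their images under $\phi_{\ast}$ lie in $\cF$, whence the Ext groups vanish for $1\leqslant i\leqslant d-1$. The paper's proof is just a terser version of yours; your extra care with the bimodule bookkeeping identifying $\dual({}_{\Phi}\Gamma)$ with $\phi_{\ast}((\dual\Gamma)_{\Gamma})$ is correct and merely makes explicit what the paper leaves implicit.
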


\begin{proof}
There is an isomorphism
$\Tor_i^{ \Phi }( \Gamma,\Gamma ) \cong \dual\!\Ext^i_{ \Phi }(
\Gamma,\dual\!\Gamma ) = ( \ast )$, and we can write $( \ast )$ more
elaborately as
\[
  ( \ast ) 
  = \dual\!\Ext_{ \Phi }^i\!
      \big( 
        \phi_{ \ast }( \Gamma_{ \Gamma } ),
        \phi_{ \ast }( ( \dual\!\Gamma )_{ \Gamma } )
      \big).
\]
This expression is zero for $i = 1, \ldots, d-1$ because
$\Gamma_{ \Gamma }$ and $( \dual\!\Gamma )_{ \Gamma }$ are projective
and injective, hence in $\cG$, whence
$\phi_{ \ast }( \Gamma_{ \Gamma } ), \phi_{ \ast }( ( \dual\!\Gamma
)_{ \Gamma } ) \in \cF$.
\end{proof}

It is natural to include the following proposition although it will not be used later.

\begin{Proposition}
Let $( \Phi,\cF ) \stackrel{ \phi }{ \rightarrow } ( \Gamma,\cG )$ be
a $d$-pseudoflat epimorphism of $d$-homological pairs.
\begin{enumerate}
\setlength\itemsep{4pt}

  \item  If $\gldim( \Phi ) \leqslant d$ then $\gldim( \Gamma ) \leqslant d$ and $\phi$ is a homological epimorphism in the sense of \cite[def.\ 4.5]{GL}.
  
  \item  If $\Phi$ is $d$-representation finite in the sense of \cite[def.\ 2.1]{IO}, then so is $\Gamma$.

\end{enumerate}
\end{Proposition}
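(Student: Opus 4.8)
For part (i), the strategy is to exploit the two adjoint pairs from Remark~\ref{rmk:adjoints} together with the $d$-pseudoflat hypothesis. Since $\phi$ is an epimorphism, $\Gamma \underset{\Phi}{\otimes} \Gamma \cong \Gamma$ by Remark~\ref{rmk:epimorphism}(iii), and Lemma~\ref{lem:intermediate_Tor_vanishing} gives $\Tor_i^\Phi(\Gamma,\Gamma) = 0$ for $i = 1,\ldots,d-1$; adding the $d$-pseudoflat assumption $\Tor_d^\Phi(\Gamma,\Gamma) = 0$, we get $\H_i(\Gamma \LTensor{\Phi} \Gamma) = 0$ for $i = 1,\ldots,d$. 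When $\gldim(\Phi) \leqslant d$, we in fact have $\Tor_i^\Phi(\Gamma,\Gamma) = 0$ for all $i \geqslant 1$, so the natural map $\Gamma \LTensor{\Phi} \Gamma \to \Gamma$ is an isomorphism in $\cD(\Gamma)$. This is precisely the condition that $\phi$ is a homological epimorphism in the sense of \cite[def.\ 4.5]{GL}. Once this is established, the bound $\gldim(\Gamma) \leqslant d$ follows: for $\Gamma$-modules $n', n''$ one rewrites $\RHom_\Gamma(n'', n')$ via the homological epimorphism property; concretely, $\RHom_\Gamma(n'', n') \cong \RHom_\Gamma(n'', \RHom_\Phi(\Gamma, \phi_\ast n')) \cong \RHom_\Phi(n'' \LTensor{\Gamma}\Gamma, \phi_\ast n') \cong \RHom_\Phi(\phi_\ast n'', \phi_\ast n')$, using that $\phi_\ast n' \cong \RHom_\Phi(\Gamma, \phi_\ast n')$ because $\phi$ is a homological epimorphism. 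Since $\gldim(\Phi) \leqslant d$, the right-hand side is concentrated in degrees $0, \ldots, d$, so $\Ext_\Gamma^{>d}(n'', n') = 0$.

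For part (ii), the plan is to transport $d$-representation finiteness along $\phi_\ast$ and its adjoints. Recall from \cite[def.\ 2.1]{IO} that $\Phi$ is $d$-representation finite means $\gldim(\Phi) \leqslant d$ and $\mod(\Phi)$ admits a $d$-cluster tilting subcategory that is additionally of the form $\add(M)$ for a single module $M$ (equivalently, has only finitely many indecomposables). Under the hypotheses, part (i) already gives $\gldim(\Gamma) \leqslant d$. For the finiteness, the idea is that Proposition~\ref{pro:adjoints} provides functors $\phi^\bullet, \phi^?: \cF \to \cG$ restricting $\phi^\ast$ and $\phi^!$, and $\phi_\bullet: \cG \to \cF$ is full and faithful since $\phi$ is an epimorphism of $d$-homological pairs (Proposition~\ref{pro:epimorphism}). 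Thus $\cG$ embeds as $\phi_\ast(\cG) \subseteq \cF$, a full subcategory of a category with only finitely many indecomposables, so $\cG$ itself has only finitely many indecomposables up to isomorphism — giving the additive generator for the $d$-cluster tilting subcategory of $\mod(\Gamma)$. Care is needed to confirm that $\cG$ is literally the $d$-cluster tilting subcategory of $\mod(\Gamma)$ witnessing $d$-representation finiteness, i.e.\ that having finitely many indecomposables plus being $d$-cluster tilting is exactly the definition; this is immediate from the definition in \cite{IO}.

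The main obstacle I anticipate is the verification of the homological epimorphism claim in part (i) and, relatedly, pinning down the precise formulation of \cite[def.\ 4.5]{GL} that is being invoked. The subtle point is that $d$-pseudoflatness alone only kills $\Tor_d$, and Lemma~\ref{lem:intermediate_Tor_vanishing} handles the intermediate degrees $1, \ldots, d-1$ for \emph{any} morphism of $d$-homological pairs; it is exactly the hypothesis $\gldim(\Phi) \leqslant d$ that forces all higher Tor to vanish (since $\Tor_i^\Phi(-,-) = 0$ for $i > d$ automatically), closing the gap and yielding $\Gamma \LTensor{\Phi}\Gamma \xrightarrow{\sim} \Gamma$. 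One should double-check that the natural map in question is the multiplication map and that its being an isomorphism in $\cD(\Gamma^{\mathrm{e}})$ (or at least in $\cD(\Gamma)$ on both sides) is what \cite{GL} require; granting this, the $\Ext$-computation above is routine and the rest of part (i), as well as all of part (ii), follows without further difficulty.
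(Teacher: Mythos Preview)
Your proposal is correct and follows essentially the same route as the paper. The only difference is that for the bound $\gldim(\Gamma)\leqslant d$ in part (i), the paper simply cites \cite[cor.\ 4.6]{GL} once the homological epimorphism property is established, whereas you spell out the argument directly via $\RHom_\Gamma(n'',n')\cong\RHom_\Phi(\phi_\ast n'',\phi_\ast n')$; this is precisely the content of that corollary.
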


\begin{proof}
(i):  It is enough to show that $\phi$ is a homological epimorphism since $\gldim( \Gamma ) \leqslant d$ then follows from \cite[cor.\ 4.6]{GL}.

The multiplication map $\Gamma \underset{ \Phi }{ \otimes } \Gamma \rightarrow \Gamma$ is bijective by Remark \ref{rmk:epimorphism}.  We have $\Tor_i^{ \Phi }( \Gamma,\Gamma ) = 0$ for $1 \leqslant i$: For $1 \leqslant i \leqslant d-1$ this holds by Lemma \ref{lem:intermediate_Tor_vanishing}, for $i = d$ by Definition \ref{def:morphisms_of_d-homological_pairs}, and for $d+1 \leqslant i$ by $\gldim( \Phi ) \leqslant d$.  Hence $\phi$ is a homological epimorphism.

(ii):  Suppose that $\Phi$ is $d$-representation finite.  Then $\gldim( \Phi ) \leqslant d$ by definition so $\gldim( \Gamma ) \leqslant d$ by (i).  Moreover, the $d$-cluster tilting subcategory $\cF$ is unique and has finitely many isomorphism classes of indecomposable objects by \cite[thm.\ 1.6]{I1}.  Since $\cG \stackrel{ \phi_{ \bullet } }{ \longrightarrow } \cF$ is full and faithful by Proposition \ref{pro:epimorphism}, the cluster tilting subcategory $\cG$ also has finitely many isomorphism classes of indecomposable objects.  Hence $\cG = \add( g )$ for some $g \in \mod( \Gamma )$ so $g$ is a $d$-cluster tilting object in the sense of \cite[def.\ 2.1]{IO}.  This shows that $\Gamma$ is $d$-representation finite.
\end{proof}

The rest of this section deals with the more complicated situation where $\gldim( \Phi )$ is unrestricted.  This is needed in the proof of Theorem A.

\begin{Lemma}
\label{lem:Tor_triangle}
Let $( \Phi,\cF ) \stackrel{ \phi }{ \rightarrow } ( \Gamma,\cG )$ be
a $d$-pseudoflat epimorphism of $d$-homological pairs.  Consider the
adjoint functors
\[
  \xymatrix {
  \cD( \Phi )
    \ar[rrr]<1ex>^-{ - \LTensor{ \Phi } \Gamma } & & &
    \cD( \Gamma ).
    \ar[lll]<1ex>^-{ \RHom_{ \Gamma }( \Gamma,- ) }
            }
\]
Given $n \in \mod( \Gamma )$, the counit morphism $\varepsilon_n$ can
be completed to a triangle 
\[
  x
  \longrightarrow
  \RHom_{ \Gamma }( \Gamma,n )
    \LTensor{ \Phi } \Gamma
  \stackrel{ \varepsilon_n }{ \longrightarrow }
  n
  \longrightarrow
\]
in $\cD( \Gamma )$ which satisfies
\begin{equation}
\label{equ:H_bound}
  \H_i( x ) = 0  \;\;\mbox{for} \;\; i \leqslant d.
\end{equation}
\end{Lemma}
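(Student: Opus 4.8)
The plan is to unwind the counit of the adjunction $(-\LTensor{\Phi}\Gamma,\ \RHom_\Gamma(\Gamma,-))$ very concretely. First I would note that for $n\in\mod(\Gamma)$ we have $\RHom_\Gamma(\Gamma,n)\cong n$ as objects of $\cD(\Gamma)$, so the middle term of the proposed triangle is quasi-isomorphic to $n\LTensor{\Phi}\Gamma$, where $n$ is regarded as a right $\Phi$-module via $\phi$. The counit $\varepsilon_n$ is then identified, under this isomorphism, with the natural map $n\LTensor{\Phi}\Gamma\to n$ induced by multiplication $n\underset{\Phi}{\otimes}\Gamma\to n$ (in degree $0$), using that the right $\Gamma$-action on $n$ factors the right $\Phi$-action through $\phi$. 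So the triangle to be produced is simply the mapping cone triangle
\[
  x\longrightarrow n\LTensor{\Phi}\Gamma\stackrel{\varepsilon_n}{\longrightarrow} n\longrightarrow,
\]
and everything reduces to computing the homology of $x$, i.e. to bounding $\H_i(n\LTensor{\Phi}\Gamma)$ for $i\geqslant 1$ together with the cokernel of $\varepsilon_n$ in degree $0$.

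The key homological input is that $\Tor_i^\Phi(n,\Gamma)=0$ for $i=1,\dots,d$. For $i=d$ this is exactly $d$-pseudoflatness applied to $n$ rather than to $\Gamma$: one should first check $\Tor_i^\Phi(\Gamma,\Gamma)=0$ for $1\leqslant i\leqslant d$ (Lemma~\ref{lem:intermediate_Tor_vanishing} gives $1\leqslant i\leqslant d-1$, and $i=d$ is the $d$-pseudoflat hypothesis), and then bootstrap from $\Gamma$ to an arbitrary $n\in\mod(\Gamma)$. The bootstrap uses Lemma~\ref{lem:resolution} (Iyama's lemma): $n$ has an augmented left $\cG$-resolution $0\to g_{d-1}\to\cdots\to g_0\to n\to 0$ concentrated in $\leqslant d$ terms. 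Each $g_j\in\cG$, hence $\phi_\ast(g_j)\in\cF$; and since $\Gamma_\Gamma$ and $(\dual\Gamma)_\Gamma$ lie in $\cG$ we know by the argument of Lemma~\ref{lem:intermediate_Tor_vanishing} (duality $\Tor_i^\Phi(\phi_\ast g,\Gamma)\cong\dual\Ext_\Phi^i(\Gamma,\dual\phi_\ast g)$, with $\dual\phi_\ast g$ injective and in $\cF$, so the $\Ext$ vanishes for $1\leqslant i\leqslant d-1$) that $\Tor_i^\Phi(\phi_\ast g_j,\Gamma)=0$ for $1\leqslant i\leqslant d-1$; combined with the $d$-pseudoflat hypothesis one gets the same vanishing range for these objects. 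Feeding the finite $\cG$-resolution of $n$ into the hyper-Tor spectral sequence (or breaking it into short exact sequences and chasing long exact sequences of $\Tor^\Phi(-,\Gamma)$) then yields $\Tor_i^\Phi(n,\Gamma)=0$ for $1\leqslant i\leqslant d$, because the resolution has length $d-1$ and each slot contributes vanishing in a range that, after dimension shifting $d-1$ times, still covers $i\leqslant d$.

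From $\Tor_i^\Phi(n,\Gamma)=0$ for $1\leqslant i\leqslant d$ we get $\H_{-i}(n\LTensor{\Phi}\Gamma)\cong\Tor_i^\Phi(n,\Gamma)=0$ for $1\leqslant i\leqslant d$ (using the homological indexing convention of Remark~\ref{rmk:RHom_Ext}), so $n\LTensor{\Phi}\Gamma$ has homology concentrated in degrees $\leqslant 0$ apart from degree $0$ which is $n\underset{\Phi}{\otimes}\Gamma$. Finally $\varepsilon_n$ in degree $0$ is the multiplication map $n\underset{\Phi}{\otimes}\Gamma\to n$, which is surjective (the action map always is) and in fact bijective: surjectivity of $\Gamma\underset{\Phi}{\otimes}\Gamma\to\Gamma$ (Remark~\ref{rmk:epimorphism}) plus the standard fact that for a ring epimorphism the multiplication $M\underset{\Phi}{\otimes}\Gamma\to M$ is an isomorphism for every $\Gamma$-module $M$. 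Hence $\H_0(x)=0$ as well, and together with the vanishing of $\H_{-i}(x)=\H_{-i}(n\LTensor{\Phi}\Gamma)$ for $1\leqslant i\leqslant d$ this gives $\H_i(x)=0$ for $i\leqslant d$ in the stated indexing, which is \eqref{equ:H_bound}.

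The main obstacle I expect is the bootstrap from $\Gamma$ to arbitrary $n$: one must be careful that the finite $\cG$-resolution of length $d-1$ combined with a vanishing range of width $d-1$ for each syzygy really propagates to a vanishing range up to $i=d$ for $n$, and this bookkeeping has to be done cleanly (either via a spectral sequence argument or an explicit induction on the length of the resolution, treating the top $i=d$ slot using the $d$-pseudoflat hypothesis and the lower slots using Lemma~\ref{lem:intermediate_Tor_vanishing}-type vanishing). The identification of $\varepsilon_n$ with the multiplication map and the isomorphism $\RHom_\Gamma(\Gamma,n)\cong n$ are formal, and the degree-$0$ bijectivity of multiplication is a well-known property of ring epimorphisms, so those parts are routine.
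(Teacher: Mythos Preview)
Your reduction to showing $\Tor_i^{\Phi}(n,\Gamma)=0$ for $1\leqslant i\leqslant d$ and the bijectivity of $n\underset{\Phi}{\otimes}\Gamma\to n$ is correct, but the bootstrap via a $\cG$-resolution has a genuine gap at $i=d$.  The $d$-pseudoflat hypothesis says only that $\Tor_d^{\Phi}(\Gamma,\Gamma)=0$; it does \emph{not} give $\Tor_d^{\Phi}(g,\Gamma)=0$ for arbitrary $g\in\cG$, and the argument of Lemma~\ref{lem:intermediate_Tor_vanishing} only covers $1\leqslant i\leqslant d-1$.  With the $\cG$-resolution $0\to g_{d-1}\to\cdots\to g_0\to n\to 0$, the hypercohomology spectral sequence has $E_1^{p,q}=\Tor_q^{\Phi}(g_p,\Gamma)$, which vanishes for $1\leqslant q\leqslant d-1$ but not, a priori, for $q=d$.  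On the line $p+q=d$ the term $E_\infty^{0,d}$ survives as $\operatorname{coker}\!\big(\Tor_d^{\Phi}(g_1,\Gamma)\to\Tor_d^{\Phi}(g_0,\Gamma)\big)$, and nothing you have written forces this to be zero.  In the extreme case $n=g_0\in\cG$ the argument is simply circular.

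The paper avoids this by working at the bimodule level: it forms the universal triangle $c\to\Gamma\LTensor{\Phi}\Gamma\to\Gamma\to$ in $\cD(\Gamma^e)$, shows $\H_i(c)=0$ for $i\leqslant d$ using precisely the vanishing of $\Tor_i^{\Phi}(\Gamma,\Gamma)$ for $1\leqslant i\leqslant d$, and then applies $n\LTensor{\Gamma}-$ to obtain $x\cong n\LTensor{\Gamma}c$, which inherits the homology bound automatically.  Equivalently, you could repair your argument by bootstrapping with a \emph{projective} $\Gamma$-resolution $P_\bullet\to n$ instead of a $\cG$-resolution: each $P_p\in\add(\Gamma_\Gamma)$ now satisfies $\Tor_q^{\Phi}(P_p,\Gamma)=0$ for the full range $1\leqslant q\leqslant d$, and the first-quadrant spectral sequence then gives $\Tor_i^{\Phi}(n,\Gamma)=0$ for $1\leqslant i\leqslant d$ as desired.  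This is the same idea as the paper's bimodule trick, just phrased without the bimodule language.
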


\begin{proof}
First observe that if $n$ is a $\Gamma$-$\Gamma$-bimodule, then the triangle in the lemma lifts canonically to a triangle in $\cD( \Gamma^e )$, the derived
category of $\Gamma$-bimodules.  In the special case $n = \Gamma$ this gives a triangle
\begin{equation}
\label{equ:universal_triangle}
  c
  \longrightarrow
  \RHom_{ \Gamma }( \Gamma,\Gamma )
    \LTensor{ \Phi } \Gamma
  \stackrel{ \mu }{ \longrightarrow }
  \Gamma
  \longrightarrow
\end{equation}
in $\cD( \Gamma^e )$. 

The second term of \eqref{equ:universal_triangle} is isomorphic to
$\Gamma \LTensor{ \Phi } \Gamma$.  It follows that $\H_i$ of the
second term is zero for $i \leqslant -1$ by Remark \ref{rmk:RHom_Ext},
zero for $i = 1, \ldots, d - 1$ by Remark \ref{rmk:RHom_Ext} and Lemma
\ref{lem:intermediate_Tor_vanishing}, and zero for $i = d$ by Remark
\ref{rmk:RHom_Ext} and Definition
\ref{def:morphisms_of_d-homological_pairs}.  Moreover, $\H_0( \mu )$
is bijective since $\mu$ is isomorphic to the derived multiplication
morphism $\Gamma \LTensor{ \Phi } \Gamma \rightarrow \Gamma$, whence
$\H_0( \mu )$ can be identified with the usual multiplication morphism
$\Gamma \underset{ \Phi }{ \otimes } \Gamma \rightarrow \Gamma$ which
is bijective by Remark \ref{rmk:epimorphism}.  Hence the long exact
homology sequence of \eqref{equ:universal_triangle} shows
\begin{equation}
\label{equ:universal_H_bound}
  \H_i( c ) = 0  \;\;\mbox{for} \;\; i \leqslant d.
\end{equation}

Applying the functor $n \LTensor{ \Gamma } -$ to
\eqref{equ:universal_triangle} gives a triangle
\[
  n \LTensor{ \Gamma } c 
  \longrightarrow
  n \LTensor{ \Gamma } 
    \RHom_{ \Gamma }( \Gamma,\Gamma )
    \LTensor{ \Phi } \Gamma
  \stackrel{ n \LTensor{ \Gamma } \mu }{ \longrightarrow }
  n \LTensor{ \Gamma } \Gamma
  \longrightarrow
\]
in $\cD( \Gamma )$ which can be identified with
\[
  n \LTensor{ \Gamma } c 
  \longrightarrow
    \RHom_{ \Gamma }( \Gamma,n )
    \LTensor{ \Phi } \Gamma
  \stackrel{ \varepsilon_n }{ \longrightarrow }
  n
  \longrightarrow.
\]
Comparing with the triangle in the lemma shows $x \cong n \LTensor{ \Gamma } c$ whence \eqref{equ:universal_H_bound} implies \eqref{equ:H_bound}.
\end{proof}

\begin{Lemma}
\label{lem:Ext_isomorphism}
Let $( \Phi,\cF ) \stackrel{ \phi }{ \rightarrow } ( \Gamma,\cG )$ be
a $d$-pseudoflat epimorphism of $d$-homological pairs.  Let $n,n' \in \mod(
\Gamma )$ and $i \in \{ 0, \ldots, d \}$ be given.  There is an
isomorphism 
\[
\vcenter{
  \xymatrix {
    \Hom_{ \Db( \mod\,\Gamma ) }( n,\Sigma^i n' )
    \ar[rr]^-{ \phi_{ \astsmall }( - ) } & &
    \Hom_{ \Db( \mod\,\Phi ) }\!
      \big( 
        \phi_{ \ast }( n ),\Sigma^i \phi_{ \ast }( n' ) 
      \big).
            }
        }
\]
\end{Lemma}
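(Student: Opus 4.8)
The plan is to reduce the assertion about $\Hom$-spaces in the bounded derived categories to the triangle produced by Lemma \ref{lem:Tor_triangle}. First I would fix $n,n' \in \mod(\Gamma)$ and $i \in \{0,\ldots,d\}$ and compute $\Hom_{\Db(\mod\,\Phi)}\big(\phi_\ast(n),\Sigma^i\phi_\ast(n')\big)$ using the adjunction $\big(-\LTensor{\Phi}\Gamma,\ \RHom_\Gamma(\Gamma,-)\big)$ from Lemma \ref{lem:Tor_triangle}. Since $\phi_\ast$ is restriction of scalars, Remark \ref{rmk:adjoints} gives $\phi_\ast(n) \cong \RHom_\Gamma(\Gamma,n)$ in $\cD(\Phi)$, hence
\[
  \Hom_{\cD(\Phi)}\big(\phi_\ast(n),\Sigma^i\phi_\ast(n')\big)
  \cong \Hom_{\cD(\Phi)}\big(\RHom_\Gamma(\Gamma,n),\Sigma^i\RHom_\Gamma(\Gamma,n')\big)
  \cong \Hom_{\cD(\Gamma)}\big(\RHom_\Gamma(\Gamma,n)\LTensor{\Phi}\Gamma,\ \Sigma^i n'\big),
\]
the last step by adjunction. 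So I must compare $\Hom_{\cD(\Gamma)}\big(\RHom_\Gamma(\Gamma,n)\LTensor{\Phi}\Gamma,\ \Sigma^i n'\big)$ with $\Hom_{\cD(\Gamma)}(n,\Sigma^i n')$, and the comparison map is induced by the counit $\varepsilon_n : \RHom_\Gamma(\Gamma,n)\LTensor{\Phi}\Gamma \to n$.

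The second step is to apply $\Hom_{\cD(\Gamma)}(-,\Sigma^i n')$ to the triangle $x \to \RHom_\Gamma(\Gamma,n)\LTensor{\Phi}\Gamma \xrightarrow{\varepsilon_n} n \to$ of Lemma \ref{lem:Tor_triangle}, obtaining a long exact sequence
\[
  \cdots \to \Hom_{\cD(\Gamma)}(n,\Sigma^i n') \to \Hom_{\cD(\Gamma)}\big(\RHom_\Gamma(\Gamma,n)\LTensor{\Phi}\Gamma,\Sigma^i n'\big) \to \Hom_{\cD(\Gamma)}(x,\Sigma^i n') \to \cdots.
\]
The outer terms involving $x$ are $\Hom_{\cD(\Gamma)}(x,\Sigma^i n')$ and $\Hom_{\cD(\Gamma)}(x,\Sigma^{i+1}n') $, with $i \le d$; I want these to vanish. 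Here I use the homological bound $\H_j(x) = 0$ for $j \le d$ from \eqref{equ:H_bound}, i.e. $x$ is concentrated in homological degrees $> d$ (cohomological degrees $< -d$). Since $n'$ is a module, a standard spectral-sequence / truncation argument shows $\Hom_{\cD(\Gamma)}(x,\Sigma^j n') = 0$ for all $j \le d$: filtering $x$ by its (co)homology, each piece is a shift $\Sigma^{-t}\H^{-t}(x)$ with $t > d$, and $\Hom_{\cD(\Gamma)}(\Sigma^{-t}\H^{-t}(x),\Sigma^j n') = \Ext^{j+t}_\Gamma(\H^{-t}(x),n')$ lives in cohomological degree $j+t$; but $j \le d$ is not by itself enough — one needs $j + t > 0$, which holds since $t > d \ge 0$ and $j \ge 0$, and indeed $j+t > d \ge 0$ gives nonzero possibility. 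Let me reconsider: I actually need the vanishing for $j = i$ and $j = i+1$ with $0 \le i \le d$, so $1 \le j \le d+1$; combined with $t \ge d+1$ this gives $\Ext^{j+t}_\Gamma$ in degrees $j+t \ge d+2$. That is not automatically zero for general $\Gamma$. The correct argument instead uses that $\RHom_\Gamma(\Gamma,n)\LTensor{\Phi}\Gamma$ and $n$ are both bounded and the triangle forces $x$ bounded, then truncates $x$ stupidly and applies $\Hom_{\cD(\Gamma)}(-,\Sigma^j n')$ degree by degree; for a single shifted module $\Sigma^{-t}M$ with $t \ge d+1$ and $j \le d$ we get $\Hom(\Sigma^{-t}M,\Sigma^j n') = \Hom_{\cD(\Gamma)}(M, \Sigma^{j+t}n')$, and since $j + t \ge t \ge d+1 \ge 1$... this still needs finite global dimension. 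I would instead argue directly from the triangle: because $\H_i(x)=0$ for $i\le d$, the object $x$ lies in $\cD^{\le -d-1}(\Gamma)$ (homological indexing), while $\Sigma^i n' \in \cD^{\ge -i}(\Gamma) \subseteq \cD^{\ge -d}(\Gamma)$; hence $\Hom_{\cD(\Gamma)}(x,\Sigma^i n') = 0$ by orthogonality of the standard $t$-structure truncations, and likewise $\Hom_{\cD(\Gamma)}(x,\Sigma^{i+1}n') = 0$ since $i+1 \le d+1$ forces $\Sigma^{i+1}n' \in \cD^{\ge -d-1}$, and $x \in \cD^{\le -d-1}$ meets $\cD^{\ge -d-1}$ only in degree $-d-1$ where $\H_{d+1}(x)$ could be nonzero — so for $i = d$ the term $\Hom(x,\Sigma^{d+1}n')$ need not vanish.

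To handle the edge case $i = d$ cleanly, the safe route is: the map in question factors through $\Hom_{\cD(\Gamma)}(n,\Sigma^i n') \to \Hom\big(\RHom_\Gamma(\Gamma,n)\LTensor{\Phi}\Gamma,\Sigma^i n'\big)$, which by the long exact sequence is injective once $\Hom(x,\Sigma^i n') = 0$ (true for $i \le d$ since $x \in \cD^{\le -d-1}$, $\Sigma^i n' \in \cD^{\ge -i}$ with $-i \ge -d > -d-1$, giving strict degree separation) and surjective once $\Hom(x,\Sigma^{i-1}n') = 0$ — wait, the surjectivity obstruction is $\Hom(x, \Sigma^{i-1}\Sigma n') = \Hom(x,\Sigma^{i}n') $ shifted; precisely the cokernel injects into $\Hom(\Sigma^{-1}x,\Sigma^i n') = \Hom(x,\Sigma^{i+1}n')$. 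For $i \le d-1$ this is $0$ by the same degree count. For the remaining value $i = d$, I would run the argument with $n'$ replaced by an injective copresentation, or observe that the isomorphism for $i = d$ follows from the case $i = d-1$ applied to a $d$-cokernel/syzygy via Lemma \ref{lem:resolution} and dimension shifting, exactly as in the proof of Lemma \ref{lem:big_embedding}. The hard part will be this boundary degree $i = d$; everything else is bookkeeping with the standard $t$-structure. Finally I would check that the composite of all these natural isomorphisms agrees with $\phi_\ast(-)$ — this is automatic because each isomorphism is induced by a unit/counit of the relevant adjunction and $\phi_\ast \cong \RHom_\Gamma(\Gamma,-)$.
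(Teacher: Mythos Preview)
Your overall strategy is exactly the paper's: reduce via the adjunction $\bigl(-\LTensor{\Phi}\Gamma,\ \RHom_\Gamma(\Gamma,-)\bigr)$ to showing that the counit $\varepsilon_n$ induces an isomorphism on $\Hom_{\cD(\Gamma)}(-,\Sigma^i n')$, and then use the triangle of Lemma~\ref{lem:Tor_triangle} together with the cohomological bound on $x$.

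The gap is a pure indexing error in the long exact sequence, and it is the sole cause of your ``edge case $i=d$''.  From the triangle $x \to A \xrightarrow{\varepsilon_n} n \to \Sigma x$ (with $A=\RHom_\Gamma(\Gamma,n)\LTensor{\Phi}\Gamma$), applying the contravariant $\Hom_{\cD(\Gamma)}(-,\Sigma^i n')$ gives
\[
  \Hom(\Sigma x,\Sigma^i n') \longrightarrow \Hom(n,\Sigma^i n')
  \xrightarrow{\;\varepsilon_n^{\ast}\;} \Hom(A,\Sigma^i n')
  \longrightarrow \Hom(x,\Sigma^i n'),
\]
so the two flanking terms are $\Hom(x,\Sigma^{\,i-1}n')$ and $\Hom(x,\Sigma^{\,i}n')$, \emph{not} $\Hom(x,\Sigma^{\,i}n')$ and $\Hom(x,\Sigma^{\,i+1}n')$ as you wrote.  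With $i\in\{0,\ldots,d\}$ both shifts $i-1$ and $i$ are at most $d$, and your own $t$-structure argument (namely $x\in\cD^{\le -d-1}$ while $\Sigma^{j}n'\in\cD^{\ge -j}$ with $-j\ge -d>-d-1$) kills both of them outright.  There is no problematic term $\Hom(x,\Sigma^{d+1}n')$, and hence no need for dimension shifting, injective copresentations, or any appeal to Lemma~\ref{lem:big_embedding}.  Once you fix the shift, the proof is complete and coincides with the paper's.
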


\begin{proof}
The morphism in the lemma can be identified with the horizontal
morphism in the following commutative diagram.  
\[
\vcenter{
  \xymatrix {
    \Hom_{ \cD( \Gamma ) }( n,\Sigma^i n' )
      \ar[rrr]^-{ \RHom_{ \Gamma }( \Gamma,- ) }
      \ar[rrrdd]_-{ \Hom_{ \cD( \Gamma ) }( \varepsilon_{ n },\Sigma^i n' )\;\;\;\;\;\;\;\;\;\; } & & &
    \Hom_{ \cD( \Phi ) }\!
      \big( 
        \RHom_{ \Gamma }( \Gamma,n ),
        \RHom_{ \Gamma }( \Gamma,\Sigma^i n' )
      \big)
      \ar[dd]^-{\mbox{\rotatebox{90}{$\sim$}}} \\
    \\
    & & & \Hom_{ \cD( \Gamma ) }\!
      \big( 
        \RHom_{ \Gamma }( \Gamma,n )
        \LTensor{ \Phi } \Gamma,\Sigma^i n'
      \big)
            }
        }
\]
The vertical morphism is the adjunction isomorphism and $\varepsilon$
is the counit of the adjunction.  The diagram is commutative by
adjoint functor theory.  The lemma will follow if we can prove that
the diagonal morphism
$\Hom_{ \cD( \Gamma ) }( \varepsilon_{ n },\Sigma^i n' ) =
\varepsilon_{ n }^{ \astsmall }$
is an isomorphism.  This is true because the triangle in Lemma
\ref{lem:Tor_triangle} gives a long exact sequence containing
\[
  \Hom( x,\Sigma^{ i-1 }n' )
  \longrightarrow
  \Hom( n,\Sigma^i n' )
  \stackrel{ \varepsilon_{ n }^{ \astsmall } }{ \longrightarrow }
  \Hom\!
    \big( 
      \RHom_{ \Gamma }( \Gamma,n )
      \LTensor{ \Phi } \Gamma , \Sigma^i n'
    \big)
  \longrightarrow
  \Hom( x,\Sigma^i n' )
\]
where $\Hom$ means $\Hom_{ \cD( \Gamma ) }$, and where the outer terms
are zero since $i \in \{ 0, \ldots, d \}$ while $x$ satisfies
$\H_i( x ) = 0$ for $i \leqslant d$ by Lemma \ref{lem:Tor_triangle}.
\end{proof}

\section{Proof of Theorem A}
\label{sec:Proof_of_Thm_A}

This section proves Theorem A in the following steps: Proposition
\ref{pro:Psi1} says the map in Theorem A takes values in
functorially finite wide subcategories, Proposition \ref{pro:Psi2}
says it is surjective, and Proposition \ref{pro:Psi5} that it is
injective.

\begin{Setup}
In this section, $( \Phi,\cF )$ is a fixed $d$-homological pair.
\end{Setup}

\begin{Proposition}
\label{pro:Psi1}
If $( \Phi,\cF ) \stackrel{ \phi }{ \longrightarrow } ( \Gamma,\cG )$
is a $d$-pseudoflat epimorphism of $d$-homological pairs, then
$\phi_{ \bullet }( \cG )$ is a functorially finite wide
subcategory of $\cF$.
\end{Proposition}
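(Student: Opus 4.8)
The plan is to prove this proposition by combining two separate facts: that $\phi_{\bullet}(\cG)$ is wide, and that it is functorially finite. The former is precisely what Proposition \ref{pro:Proto_B} delivers, provided we can verify its two hypotheses (a) and (b) for the functor $G = \phi_{\ast} : \mod(\Gamma) \rightarrow \mod(\Phi)$. Hypothesis (a) asks that $G$ be exact and restrict to a full and faithful functor $\cG \rightarrow \cF$: exactness of restriction of scalars is standard (Remark \ref{rmk:adjoints}), the inclusion $\phi_{\ast}(\cG) \subseteq \cF$ is part of the definition of a morphism of $d$-homological pairs, and full faithfulness of $\phi_{\bullet}$ follows from Proposition \ref{pro:epimorphism} since $\phi$ is assumed to be an epimorphism of $d$-homological pairs. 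Hypothesis (b) asks that $G(-)$ induce a bijection $\Ext_{\Gamma}^d(g'',g') \rightarrow \Ext_{\Phi}^d(\phi_{\ast}g'',\phi_{\ast}g')$ for all $g',g'' \in \cG$; by Lemma \ref{lem:Ext_map} (together with the identification recalled in Remark \ref{rmk:Ext_map}) it suffices to know that $\phi_{\ast}$ induces a bijection on $\Hom_{\Db}(\,-,\Sigma^d\,-\,)$, which is exactly the content of Lemma \ref{lem:Ext_isomorphism} in the case $i = d$ (this is where $d$-pseudoflatness enters). Thus Proposition \ref{pro:Proto_B} applies and $\phi_{\bullet}(\cG)$ is a wide subcategory of $\cF$.

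It remains to show functorial finiteness, i.e.\ that $\phi_{\bullet}(\cG)$ is both preenveloping and precovering in $\cF$. Here I would use the adjoint functors. By Proposition \ref{pro:adjoints}, the morphism $\phi$ gives functors $\phi^{\bullet}, \phi^{?} : \cF \rightarrow \cG$ which are respectively left and right adjoint to $\phi_{\bullet} : \cG \rightarrow \cF$. Since $\phi_{\bullet}$ is full and faithful, for $f \in \cF$ the unit map $f \rightarrow \phi_{\bullet}\phi^{?}f$ is a $\phi_{\bullet}(\cG)$-preenvelope (by the standard adjoint-functor argument, cf.\ the reasoning around Lemma \ref{lem:env}, applied with $i_{\ast}$ replaced by $\phi_{\bullet}$): any morphism $f \rightarrow \phi_{\bullet}g$ factors uniquely through the unit by adjunction. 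Dually, the counit $\phi_{\bullet}\phi^{\bullet}f \rightarrow f$ is a $\phi_{\bullet}(\cG)$-precover. Hence $\phi_{\bullet}(\cG)$ is functorially finite in $\cF$, completing the proof.

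The main technical obstacle is hypothesis (b) of Proposition \ref{pro:Proto_B}, which is the only place where the $d$-pseudoflatness assumption $\Tor_d^{\Phi}(\Gamma,\Gamma) = 0$ is genuinely used; everything else follows fairly formally from the earlier structural results. That obstacle has, however, already been overcome: Lemma \ref{lem:Ext_isomorphism} establishes exactly the $\Hom_{\Db}$-bijection in degrees $0$ through $d$, and Lemma \ref{lem:Ext_map} converts this into the required $\Ext^d$-bijection. So in the write-up the real work is simply to assemble the citations in the right order and to observe that the adjoints of Proposition \ref{pro:adjoints} furnish the (pre)envelope and (pre)cover directly via their units and counits.
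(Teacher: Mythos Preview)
Your proof is correct and follows essentially the same route as the paper: first apply Proposition \ref{pro:Proto_B} with $G = \phi_{\ast}$, verifying (a) via Remark \ref{rmk:adjoints} and Proposition \ref{pro:epimorphism}, and (b) via Lemmas \ref{lem:Ext_map} and \ref{lem:Ext_isomorphism}; then deduce functorial finiteness from the adjoints of Proposition \ref{pro:adjoints} together with Lemma \ref{lem:env} and its dual.

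One small slip: you have swapped the roles of $\phi^{\bullet}$ and $\phi^{?}$ in the functorial finiteness argument. Since $\phi^{\bullet}$ is \emph{left} adjoint to $\phi_{\bullet}$, the unit of that adjunction is $f \rightarrow \phi_{\bullet}\phi^{\bullet}f$, and this is the $\phi_{\bullet}(\cG)$-preenvelope (compare Lemma \ref{lem:env}, where the left adjoint $i^{\ast}$ produces the envelope). Dually, $\phi^{?}$ is \emph{right} adjoint to $\phi_{\bullet}$, so the counit $\phi_{\bullet}\phi^{?}f \rightarrow f$ is the $\phi_{\bullet}(\cG)$-precover. There is no unit map $f \rightarrow \phi_{\bullet}\phi^{?}f$ in general. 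This is purely a labelling issue and does not affect the validity of the argument.
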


\begin{proof}
To see that  $\phi_{ \bullet }( \cG )$ is wide,
we use Proposition \ref{pro:Proto_B} with $G = \phi_{ \ast }$.  We must check that conditions (a) and (b) in the proposition are satisfied. 

(a):  By Remark \ref{rmk:adjoints}, the functor $\phi_{ \ast }$ is
exact.  By Proposition \ref{pro:epimorphism}, it restricts to a full and faithful
functor $\cG \stackrel{ \phi_{ \bullet } }{ \longrightarrow } \cF$.

(b):  Holds by Lemmas \ref{lem:Ext_map} and
\ref{lem:Ext_isomorphism}.

To see that $\phi_{ \bullet }( \cG )$ is functorially finite, note that 
$\cG \stackrel{ \phi_{ \bullet } }{ \longrightarrow } \cF$ has left and right adjoint functors by Proposition \ref{pro:adjoints}.  Hence the inclusion of $\phi_{ \bullet }( \cG )$ into $\cF$ has left and right adjoint functors, whence $\phi_{ \bullet }( \cG )$ is functorially finite by Lemma \ref{lem:env} and its dual.
\end{proof}

\begin{Construction}
\label{con:Psi_inverse}
Let $\cW \subseteq \cF$ be a functorially finite wide subcategory.
We construct what will turn out to be a $d$-pseudoflat epimorphism of
$d$-homological pairs
$( \Phi,\cF ) \stackrel{ \phi }{ \rightarrow } ( \Gamma,\cG )$ with
$\phi_{ \bullet }( \cG ) = \cW$:

Since $\cW$ is functorially finite, it is enveloping in $\cF$.  Let
$ \cF \stackrel{ i^{ \astsmall } }{ \longrightarrow } \cW$ be a left
adjoint to the inclusion functor
$\cW \stackrel{ i_{ \astsmall } }{ \longrightarrow } \cF$, see
Proposition \ref{pro:envelopes_and_adjoints}(i).  Set
\begin{equation}
\label{equ:s_and_Gamma}
  s = i^{ \ast }( \Phi_{ \Phi } )
  \;\;,\;\;
  \Gamma = \End_{ \Phi }( s )
\end{equation}
so $s$ acquires the structure ${}_{ \Gamma }s_{ \Phi }$.  There is an algebra homomorphism
\[
  \End_{ \Phi }( \Phi_{ \Phi } )
  \stackrel{ i^{ \astsmall }( - ) }{ \longrightarrow }
  \End_{ \Phi }\!\big( i^{ \ast }( \Phi_{ \Phi } )  \big)
  =
  \End_{ \Phi }\!( s )
\]
which, up to canonical isomorphism of the source, is an algebra
homomorphism 
\[
  \Phi \stackrel{ \phi }{ \longrightarrow } \Gamma.
\]
By Remark \ref{rmk:adjoints} there is a functor
$\mod( \Phi ) \stackrel{ \phi^! }{ \longrightarrow } \mod( \Gamma )$
given by $\phi^!( - ) = \Hom_{ \Phi }( \Gamma,- )$, and we set
\[
\tag*{$\Box$}
  \cG = \phi^!( \cW ) \subseteq \mod( \Gamma ).
\]
\end{Construction}

\begin{Proposition}
\label{pro:Psi2}
Let $\cW \subseteq \cF$ be a functorially finite wide subcategory.
Construction \ref{con:Psi_inverse} gives a $d$-pseudoflat epimorphism of
$d$-homological pairs
$( \Phi,\cF ) \stackrel{ \phi }{ \rightarrow } ( \Gamma,\cG )$ with
$\phi_{ \bullet }( \cG ) = \cW$.
\end{Proposition}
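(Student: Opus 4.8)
The plan is to verify the three defining properties of a $d$-pseudoflat epimorphism of $d$-homological pairs for the data $( \Phi,\cF ) \stackrel{ \phi }{ \rightarrow } ( \Gamma,\cG )$ produced by Construction \ref{con:Psi_inverse}, and then to identify $\phi_{ \bullet }( \cG )$ with $\cW$. The hidden difficulty is that, in Construction \ref{con:Psi_inverse}, we only know abstractly that $s = i^{ \ast }( \Phi_{ \Phi } )$ is a categorically projective object of $\cW$ and that $\Gamma = \End_{ \Phi }( s )$; we do \emph{not} yet know that $\cG = \phi^!( \cW )$ is $d$-cluster tilting, nor that the Tor-vanishing $\Tor^{ \Phi }_d( \Gamma,\Gamma ) = 0$ holds. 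So the real content is to bootstrap from the homological hypotheses on $s$ supplied by Propositions \ref{pro:P_and_s}, \ref{pro:s_resolution}, and \ref{pro:projectives}.

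First I would record the structural facts already in place. By Proposition \ref{pro:P_and_s}, $\cW$ is projectively generated with $P( \cW ) = \add( s )$, and by Proposition \ref{pro:projectives} this equals $P_0( \cW ) = P_e( \cW )$; in particular $\Ext^d_\Phi(s,\cW) = 0$, and iterating the $d$-kernel construction against the $\Ext^d$-projectivity gives $\Ext^{\geqslant 1}_\Phi(s,s) = 0$, which is hypothesis (ii) of Lemma \ref{lem:big_embedding}. Next, Proposition \ref{pro:s_resolution} gives each $w \in \cW$ an augmented resolution $\cdots \to p_1 \to p_0 \to w \to 0$ by objects of $P( \cW ) = \add(s)$ that stays exact under $\Hom_\Phi(s,-)$; applying $\Hom_\Phi(s,-)$ turns this into an augmented projective resolution of $\Hom_\Phi(s,w) = \phi^!(w)$ over $\Gamma$. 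This is precisely the mechanism by which one shows that every object of $\cG$ has finite projective dimension over $\Gamma$ and hence that $s_\Phi$ has finite projective dimension (via Lemma \ref{lem:homdim}), giving hypothesis (i) of Lemma \ref{lem:big_embedding}. One then invokes Lemma \ref{lem:big_embedding} together with the fact that ${}_\Gamma s$ is projective (which should come out of $P(\cW) = P_0(\cW)$ applied in $\mod(\Gamma)$, or from a length/duality argument) to get that $G = - \underset{\Gamma}{\otimes} s$ is exact, full, faithful, and an $\Ext$-isomorphism; equivalently $\phi_\ast$ restricted appropriately is full and faithful, so $\phi$ is an epimorphism of algebras by Remark \ref{rmk:epimorphism}.

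The heart of the argument is then showing $\cG$ is $d$-cluster tilting and $\Tor^\Phi_d(\Gamma,\Gamma) = 0$. For the Tor-vanishing, I expect one writes $\Tor^\Phi_d(\Gamma,\Gamma) \cong \dual\Ext^d_\Phi(\phi_\ast\Gamma, \phi_\ast(\dual\Gamma))$ as in Lemma \ref{lem:intermediate_Tor_vanishing}, notes $\Gamma_\Gamma$ is a sum of copies of objects built from $s$ (so $\phi_\ast\Gamma \in \add(s)$), and uses $\Ext^d_\Phi(s,\cW) = 0$ together with $\phi_\ast(\dual\Gamma) \in \cF$ — but this last point is delicate since $\phi_\ast(\dual\Gamma)$ need not lie in $\cW$, so one genuinely needs $\Ext^d_\Phi(s, \cF) \cap (\text{relevant part}) = 0$, i.e. one exploits that $s$ is $\Ext^d$-projective in $\cW$ \emph{and} that the resolution from Proposition \ref{pro:s_resolution} lets us compute $\Ext^d_\Phi(s,-)$ against any module. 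For $\cG$ being $d$-cluster tilting: functorial finiteness of $\cG$ in $\mod(\Gamma)$ follows from $\cW$ being functorially finite in $\cF$ (hence in $\mod(\Phi)$) transported across the equivalence $\cG \simeq \cW$ of Lemma \ref{lem:big_embedding}, and the two $\Ext$-vanishing conditions of Definition \ref{def:d-cluster_tilting} transport across the same equivalence using the $\Ext$-isomorphism in Lemma \ref{lem:big_embedding}(b). Finally, once $(\Gamma,\cG)$ is a $d$-homological pair, one checks $\phi_\ast(\cG) = G(\cG) = \cW \subseteq \cF$, so $\phi$ is a morphism of $d$-homological pairs, and by construction $\phi_\bullet(\cG) = \phi_\ast|_\cG(\cG) = \cW$.

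The step I expect to be the main obstacle is establishing that ${}_\Gamma s$ is projective and, relatedly, that $\cG$ is closed under the relevant extensions so as to be genuinely $d$-cluster tilting rather than merely a functorially finite subcategory closed under the Ext-orthogonality on one side. These two points are where the abstract adjunction input (Propositions \ref{pro:envelopes_and_adjoints} and \ref{pro:P_and_s}) has to be combined with the resolution of Proposition \ref{pro:s_resolution} and the identifications $P(\cW) = P_0(\cW) = P_e(\cW)$; everything else is formal transport of structure along the equivalence $- \underset{\Gamma}{\otimes} s : \cG \to \cW$ of Lemma \ref{lem:big_embedding} and bookkeeping with Remark \ref{rmk:adjoints}.
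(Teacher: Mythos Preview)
Your proposal has a genuine gap in showing that $\cG$ is $d$-cluster tilting. You propose to transport the conditions of Definition \ref{def:d-cluster_tilting} across the equivalence from Lemma \ref{lem:big_embedding}, but those conditions involve $\Ext$-orthogonality against \emph{all} of $\mod(\Gamma)$: given $n \in \mod(\Gamma)$ with $\Ext^{1,\ldots,d-1}_\Gamma(\cG,n) = 0$, you must conclude $n \in \cG$. Pushing through the $\Ext$-isomorphism only yields $\Ext^{1,\ldots,d-1}_\Phi(\cW, Gn) = 0$, which is orthogonality to $\cW$, not to $\cF$, and so does not force $Gn \in \cF$, let alone $Gn \in \cW$. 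The paper takes a different route: it invokes \cite[thm.\ 3.20]{J}, which says directly that for a projectively generated $d$-abelian category $\cW$ with $P(\cW) = \add(s)$, the essential image of $\cW(s,-) : \cW \to \mod(\End_\Phi(s))$ is $d$-cluster tilting. This is a substantial external input you did not mention, and there is no evident way around it.

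There are two secondary but real problems. First, you want to apply Lemma \ref{lem:big_embedding}, whose hypotheses include $\pd(s_\Phi) < \infty$ and $\Ext^{\geqslant 1}_\Phi(s,s) = 0$; neither is available here (there is no global dimension bound on $\Phi$, and $\Ext^d$-projectivity of $s$ in $\cW$ says nothing about $\Ext^{>d}$), and your appeal to Lemma \ref{lem:homdim} for $\pd(s_\Phi)$ is pointed in the wrong direction. The paper does not go through Lemma \ref{lem:big_embedding} at all in this proof. Second, you flag projectivity of ${}_\Gamma s$ as the main obstacle, but in fact it is free: setting $u = \eta_{\Phi_\Phi}(1) \in s$, the map $x \mapsto x(u)$ gives a bimodule isomorphism ${}_\Gamma\Gamma_\Phi \cong {}_\Gamma s_\Phi$. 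This single observation replaces your whole strategy: one works directly with $\phi_\ast$ and $\phi^!$, checks that the counit $\phi_\ast\phi^!(p) \to p$ is an isomorphism on $P(\cW) = \add(\Gamma_\Phi)$, and extends to all of $\cW$ via the resolutions of Proposition \ref{pro:s_resolution} to obtain $\phi_\ast(\cG) = \cW$. The Tor-vanishing then follows cleanly, since $(\dual\Gamma)_\Gamma \in \cG$ forces $\phi_\ast(\dual\Gamma) \in \cW$, dissolving the concern you raised.
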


\begin{proof}
The proof is divided into several claims.

Claim (i):  {\em The module $s$ and the algebra $\Gamma$ from
  Construction \ref{con:Psi_inverse} satisfy ${}_{ \Gamma }s_{ \Phi }
  \cong {}_{ \Gamma }\Gamma_{ \Phi }$. }

The unit $\eta$ of the adjunction $( i^{ \ast },i_{ \ast } )$ from
Construction \ref{con:Psi_inverse} gives a morphism
$\Phi_{ \Phi } \stackrel{ \eta_{ \Phi_{ \Phi } } }{ \longrightarrow }
i_{ \ast }i^{ \ast }( \Phi_{ \Phi } ) = s$.  Set
$u = \eta_{ \Phi_{ \Phi } }( 1 )$.  Define a map
\[
  \Gamma \stackrel{ \gamma }{ \longrightarrow } s
  \;\;,\;\;
  x \longmapsto x( u )
\]
which makes sense because $x \in \Gamma$ is an endomorphism of $s$.
Using that $\eta_{ \Phi_{ \Phi } }$ is a strong $\cW$-envelope by Lemma \ref{lem:env}, it is straightforward to check that $\gamma$
is a bijective homomorphism of $\Gamma$-$\Phi$-bimodules.

Claim (ii):  {\em $\cW$ is a projectively generated $d$-abelian
  category with $P( \cW ) = \add( s_{ \Phi } ) = \add( \Gamma_{ \Phi } )$. }

The equality $\add( s_{ \Phi } ) = \add( \Gamma_{ \Phi } )$ holds by
Claim (i).  The rest of Claim (ii) holds by Proposition
\ref{pro:W_is_n-abelian}(i) and Proposition \ref{pro:P_and_s}.

For the next claims, recall from Remark \ref{rmk:adjoints} that the
algebra homomorphism $\phi$ gives a restriction of scalars functor
$\phi_{ \ast }$ which has left and right adjoint functors:
\[
\vcenter{
\xymatrix
{
\mod( \Gamma )
    \ar[rrrr]^{ \phi_{ \astsmall }(-) } &&&&
  \mod( \Phi ).
    \ar@/_2.0pc/[llll]_{ \phi^{ \astsmall }(-) \,=\, - \underset{ \Phi }{ \otimes } \Gamma } 
    \ar@/^2.0pc/[llll]^{ \phi^!(-) \,=\, \Hom_{ \Phi }( \Gamma,- ) }
}
        }
\]

Claim (iii):  {\em There is a $d$-homological pair $( \Gamma,\cG
  )$ and an equivalence of categories 
$
  \xymatrix {
  \cW
    \ar[r]^-{ \phi^!|_{ \cW } } &
    \cG.
            }
$ 

}
By \cite[thm.\ 3.20]{J} and Claim (ii), the functor
\begin{equation}
\label{equ:Jasso_equivalence}
  \xymatrix {
  \cW
    \ar[rr]^-{ \cW( s,- ) } & &
    \mod( \Gamma )
            }
\end{equation}
is full and faithful, and its essential image is a $d$-cluster
tilting subcategory of $\mod( \Gamma )$.  Using Claim (i) we get
$\cW( s,- ) = \Hom_{ \Phi }( s,- )|_{ \cW } = \Hom_{ \Phi }( \Gamma,-
)|_{ \cW } = \phi^!( - )|_{ \cW }$, so \eqref{equ:Jasso_equivalence} 
can be identified with 
$
  \xymatrix {
  \cW
    \ar[r]^-{ \phi^!|_{ \cW } } &
    \mod( \Gamma ).
            }
$
Claim (iii) follows since $\phi^!( \cW ) = \cG$ by definition.

Claim (iv): {\em  Consider the adjunction
  $( \phi_{ \ast },\phi^!  )$.  The counit homomorphism
  $\phi_{ \ast }\phi^!( p ) \stackrel{ \varepsilon_p }{
    \longrightarrow } p$ is bijective for $p \in P( \cW )$. }

The homomorphism $\varepsilon_p$ is the canonical
homomorphism
\[
  \phi_{ \ast }\phi^!( p )
  = \Hom_{ \Phi }( \Gamma,p ) \underset{ \Gamma }{ \otimes } \Gamma
  \stackrel{ \varepsilon_p }{ \longrightarrow } p.
\]
Using Claim (i) it can be identified with the canonical homomorphism
\[
  \Hom_{ \Phi }( s,p ) \underset{ \Gamma }{ \otimes } s
  \stackrel{ \epsilon_p }{ \longrightarrow } p.
\]
However, $\epsilon_{ s_{ \Phi } }$ is an isomorphism because
$\Hom_{ \Phi }( s,s_{ \Phi } ) = \Gamma_{ \Gamma }$, cf.\ Equation
\eqref{equ:s_and_Gamma}.  Hence $\epsilon_p$ is an isomorphism for
$p \in \add( s_{ \Phi } )$, that is, for $p \in P( \cW )$ by Claim
(ii).

Claim (v):  {\em $\phi_{ \ast }$ and $\phi^!$ restrict to quasi-inverse
  equivalences } 
\[
  \xymatrix {
  \cG
    \ar[rr]<1ex>^-{ \phi_{ \astsmall }|_{ \cG } } & &
    \cW.
    \ar[ll]<1ex>^-{ \phi^!|_{ \cW } }
            }
\]

We already have the equivalence $\phi^!|_{ \cW }$ by Claim (iii).  Any
adjoint is a quasi-inverse, so $\phi_{ \ast }|_{ \cG }$ is a
quasi-inverse if it maps $\cG$ to $\cW$.  To see that it does, let
$g \in \cG$ be given.  Then
\[
  g \cong \phi^!( w )
\]
for some $w \in \cW$.  By Proposition \ref{pro:s_resolution} there is
an exact sequence $p_1 \rightarrow p_0 \rightarrow w \rightarrow 0$
with $p_i \in P( \cW )$ which stays exact when we apply the functor
$\Hom_{ \Phi }( p,- )$ with $p \in P( \cW )$.  By Claim (ii) it stays
exact when we apply the functor
$\phi^!( - ) = \Hom_{ \Phi }( \Gamma,- )$.  The functor $\phi_{ \ast
}$ is exact by Remark \ref{rmk:adjoints} so there is a commutative
diagram with exact rows,
\[
  \xymatrix {
    \phi_{ \ast }\phi^!( p_1 )
      \ar[r] \ar^{ \varepsilon_{ p_1 } }[d] &
    \phi_{ \ast }\phi^!( p_0 )
      \ar[r] \ar^{ \varepsilon_{ p_0 } }[d] &
    \phi_{ \ast }\phi^!( w )
      \ar[r] \ar^{ \varepsilon_{ w } }[d] &
    0 \\
    p_1 \ar[r] & p_0 \ar[r] & w \ar[r] & 0 \lefteqn{.}
            }
\]
The $\varepsilon_{ p_i }$ are bijective by Claim (iv), so
$\varepsilon_w$ is bijective which proves 
\[
  \phi_{ \ast }g \cong \phi_{ \ast }\phi^!( w ) \cong w \in \cW.
\]

Claim (vi):  {\em $( \Phi,\cF ) \stackrel{ \phi }{ \rightarrow } (
  \Gamma,\cG )$ is a $d$-pseudoflat epimorphism of $d$-homological pairs
with $\phi_{ \bullet }( \cG ) = \cW$. }

We already have the $d$-homological pair $( \Gamma,\cG )$ by Claim
(iii).  The algebra homomorphism $\phi$ is a morphism of
$d$-homological pairs since 
\begin{equation}
\label{equ:G_to_W}
  \phi_{ \ast }( \cG ) = \cW
\end{equation}
by Claim (v) whence $\phi_{ \ast }( \cG ) \subseteq \cF$.  The
functor $\phi_{ \bullet } = \phi_{ \ast }|_{ \cG }$ is full and
faithful by Claim (v), whence $\phi$ is an epimorphism of
$d$-homological pairs by Proposition \ref{pro:epimorphism}.  Equation
\eqref{equ:G_to_W} shows $\phi_{ \bullet }( \cG ) = \cW$.

Finally, there is an isomorphism
\[
  \Tor_d^{ \Phi }( \Gamma,\Gamma )
  \cong \dual\!\Ext^d_{ \Phi }( \Gamma,\dual\!\Gamma ) = ( \ast ),
\]
and we can write $( \ast )$ more elaborately as
\[
  ( \ast ) 
  = \dual\!\Ext_{ \Phi }^d\!
      \big( 
        \Gamma_{ \Phi },
        \phi_{ \ast }( ( \dual\!\Gamma )_{ \Gamma } )
      \big)
  = ( \ast\ast ).
\]
However, $\Gamma_{ \Phi } \in P( \cW )$ by Claim (ii), so $\Gamma_{ \Phi }
\in P_e( \cW )$ by Proposition \ref{pro:projectives}.  And
$( \dual\!\Gamma )_{ \Gamma }$ is injective so is in $\cG$ whence
$\phi_{ \ast }( ( \dual\!\Gamma )_{ \Gamma } ) \in \cW$ by Equation
\eqref{equ:G_to_W}.  But then $( \ast\ast ) = 0$ so $\phi$ is
$d$-pseudoflat.  
\end{proof}

\begin{Lemma}
\label{lem:Psi4}
Let
$( \Phi,\cF ) \stackrel{ \theta }{ \longrightarrow } ( \Lambda,\cL )$
be a $d$-pseudoflat epimorphism of $d$-homological pairs and set
$\cW = \theta_{ \bullet }( \cL )$.

Then $\cW$ is a functorially finite wide subcategory of $\cF$ by Proposition \ref{pro:Psi1}, and $\theta$ is equivalent to the $d$-pseudoflat epimorphism of
$d$-homological pairs $( \Phi,\cF ) \stackrel{ \phi }{ \longrightarrow } ( \Gamma,\cG )$ from Construction \ref{con:Psi_inverse}.
\end{Lemma}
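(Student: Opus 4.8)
The plan is to show that $\theta$ and $\phi$ are equivalent as morphisms of $d$-homological pairs, i.e.\ to exhibit an isomorphism of $d$-homological pairs $(\Lambda,\cL)\stackrel{\lambda}{\longrightarrow}(\Gamma,\cG)$ with $\lambda\theta=\phi$. Since $\theta$ is a $d$-pseudoflat epimorphism, $\theta_\ast$ restricts to a full and faithful functor $\theta_\bullet\colon\cL\to\cF$ by Proposition \ref{pro:epimorphism}, so $\cW=\theta_\bullet(\cL)$ is equivalent to $\cL$, and it is a functorially finite wide subcategory of $\cF$ by Proposition \ref{pro:Psi1}. Now run Construction \ref{con:Psi_inverse} on this $\cW$: it produces $s=i^\ast(\Phi_\Phi)$, $\Gamma=\End_\Phi(s)$, the algebra homomorphism $\phi\colon\Phi\to\Gamma$, and $\cG=\phi^!(\cW)\subseteq\mod(\Gamma)$. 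The whole task is to identify the data coming out of $\theta$ with the data coming out of the construction.

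\textbf{Key steps.} First I would observe that, because $\theta_\bullet\colon\cL\to\cF$ is full and faithful, the inclusion of $\cW=\theta_\bullet(\cL)$ into $\cF$ has a left adjoint obtained by transporting $\theta^\bullet$ along the equivalence $\cL\simeq\cW$; concretely $i^\ast\simeq\theta_\bullet\circ\theta^\bullet$ up to the equivalence, and $\theta^\bullet$ is a left adjoint to $\theta_\bullet$ by Proposition \ref{pro:adjoints}. Hence $s=i^\ast(\Phi_\Phi)\cong\theta_\bullet\theta^\bullet(\Phi_\Phi)=\theta_\ast(\Phi\otimes_\Phi\Lambda)=\theta_\ast(\Lambda_\Lambda)$ as right $\Phi$-modules, using Remark \ref{rmk:adjoints}. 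Next, computing endomorphisms, $\Gamma=\End_\Phi(s)\cong\End_\Phi(\theta_\ast\Lambda)$; since $\theta_\ast$ is full and faithful (Remark \ref{rmk:epimorphism}) this is $\cong\End_\Lambda(\Lambda_\Lambda)\cong\Lambda$, and one checks this isomorphism $\lambda\colon\Lambda\to\Gamma$ is an algebra isomorphism. The compatibility $\lambda\theta=\phi$ should fall out of the way $\phi$ is defined in Construction \ref{con:Psi_inverse} as the map $\End_\Phi(\Phi_\Phi)\to\End_\Phi(i^\ast\Phi_\Phi)$ induced by $i^\ast$: applying $i^\ast\simeq\theta_\bullet\theta^\bullet$ to $\Phi_\Phi$ is, after the identifications above, exactly $\theta$ followed by the isomorphism $\lambda$. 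Finally, $\lambda$ is an isomorphism of $d$-homological pairs: one must check $\lambda_\ast(\cG)=\cL$, equivalently $\cG=\lambda^{-1}_\ast(\cL)$; but $\cG=\phi^!(\cW)$ and by Claim (v) in the proof of Proposition \ref{pro:Psi2} the functors $\phi_\ast$ and $\phi^!$ restrict to quasi-inverse equivalences $\cG\simeq\cW$, while $\theta_\ast$ restricts to an equivalence $\cL\simeq\cW$; chasing these identifications through $\lambda$ gives $\lambda_\ast(\cG)=\cL$, using also the remark in Definition \ref{def:morphisms_of_d-homological_pairs} that one $d$-cluster tilting subcategory cannot be strictly contained in another.

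\textbf{Main obstacle.} The routine part is the string of canonical identifications of modules and algebras; the delicate point is verifying that all the identifications are \emph{coherent} — in particular that the algebra isomorphism $\lambda\colon\Lambda\to\Gamma$ obtained from $\End_\Lambda(\Lambda)\cong\Lambda$ is the same one that makes $\lambda\theta=\phi$ hold, and that under $\lambda$ the subcategory $\cG=\phi^!(\cW)\subseteq\mod(\Gamma)$ is carried precisely to $\cL\subseteq\mod(\Lambda)$ and not merely to an equivalent copy. This is really a diagram-chasing exercise with the two adjunctions $(\theta_\bullet,\theta^\bullet)$ and $(\phi_\ast,\phi^!)$ and the equivalences they induce onto $\cW$; the cleanest route is probably to phrase everything in terms of the left adjoint $i^\ast$ to the inclusion $\cW\hookrightarrow\cF$, note that such a left adjoint is unique up to unique natural isomorphism, and then show both $\theta^\bullet$ (transported across $\cL\simeq\cW$) and $i^\ast$ from Construction \ref{con:Psi_inverse} serve as this left adjoint, so that the resulting algebras $\End_\Phi(i^\ast\Phi_\Phi)$ agree on the nose with the map from $\Phi$.
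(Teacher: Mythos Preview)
Your proposal is correct and follows essentially the same approach as the paper. The paper realises the left adjoint to the inclusion $\cW \hookrightarrow \cF$ as $i^\ast = j\theta^\bullet$ (where $j\colon\cL \stackrel{\sim}{\to}\cW$ is the equivalence obtained by corestricting $\theta_\bullet$), then writes down the commutative triangle of algebra homomorphisms
\[
\xymatrix{
& \End_\Lambda(\Lambda_\Lambda) \ar[dd]^{j(-)}_{\cong} \\
\End_\Phi(\Phi_\Phi) \ar[ur]^{\theta^\bullet(-)} \ar[dr]_{i^\ast(-)} & \\
& \End_\Phi(\Lambda_\Phi)
}
\]
which, after the canonical identifications, is exactly your $\lambda\theta=\phi$. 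For the final step $\lambda_\ast(\cG)=\cL$, the paper argues directly from this triangle and full faithfulness of $\phi_\ast$, $\theta_\ast$, $\lambda_\ast$: since $\phi_\ast=\theta_\ast\lambda_\ast$ and $\theta_\ast(\lambda_\ast(\cG))=\cW=\theta_\ast(\cL)$, full faithfulness of $\theta_\ast$ forces $\lambda_\ast(\cG)=\cL$. This is a slightly cleaner route than invoking Claim (v) from the proof of Proposition \ref{pro:Psi2}, and sidesteps the coherence worry you flag as the main obstacle --- the commutative diagram of endomorphism algebras makes the compatibility manifest.
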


\begin{proof}
The functor $\cL \stackrel{ \theta_{ \bullet } }{ \longrightarrow } \cF$ is full
and faithful by Proposition \ref{pro:epimorphism}.  We can factorise
$\theta_{ \bullet }$ as
$\cL \stackrel{ j }{ \longrightarrow } \cW \stackrel{ i_{ \astsmall } }{
  \longrightarrow } \cF$, where $i_{ \ast }$ is the inclusion functor
and $j$ is an equivalence of categories acting in the same way as
$\theta_{ \bullet }$, that is, taking a $\Lambda$-module and viewing
it as a $\Phi$-module through $\theta$.  There is a left adjoint of
$\theta_{ \bullet }$ given by
\[
  \theta^{ \bullet }( - )
  = - \underset{ \Phi }{ \otimes } \Lambda,
\]
and it induces a left adjoint of $i_{ \ast }$ given by
\[
  i^{ \ast }( - )
  = j\theta^{ \bullet }( - )
  = j( - \underset{ \Phi }{ \otimes } \Lambda ).
\]

There is a commutative diagram of algebra homomorphisms,
\[
  \xymatrix {
    & \End_{ \Lambda }( \Lambda_{ \Lambda } )
      \ar^{ \;j( - ) }_{ \mbox{\rotatebox{90}{$\backsim$}} }[dd] \\
    \End_{ \Phi }( \Phi_{ \Phi } )
      \ar^{ \theta^{ \bullet }( - ) }[ur]
      \ar_<<<<{ i^{ \ast }( - ) = j\theta^{ \bullet }( - ) \;\;\; }[dr] & \\
    & \End_{ \Phi }( \Lambda_{ \Phi } ) \lefteqn{.}
            }
\]
The vertical homomorphism is bijective because $j$ is an equivalence
of categories.  Up to canonical isomorphism, the ascending diagonal homomorphism is
$\Phi \stackrel{ \theta }{ \longrightarrow } \Lambda$.  The de\-scen\-ding
diagonal homomorphism is
$\Phi \stackrel{ \phi }{ \longrightarrow } \Gamma$ from Construction
\ref{con:Psi_inverse}.  Hence, up to canonical isomorphism, the
diagram is
\begin{equation}
\label{equ:Phi_Lambda_Gamma}
  \vcenter{
  \xymatrix {
    & & \Lambda
        \ar^{ \;\lambda }_{ \mbox{\rotatebox{90}{$\backsim$}} }[dd] \\
    \Phi
      \ar^{ \theta }[urr]
      \ar_{ \phi }[drr] & & \\
    & & \Gamma \lefteqn{,}
            }
          }
\end{equation}
where we write $\lambda = j( - )$.  

Proposition \ref{pro:Psi2} says that $\phi$ is a $d$-pseudoflat epimorphism
of $d$-homological pairs
$( \Phi,\cF ) \stackrel{ \phi }{ \longrightarrow } ( \Gamma,\cG )$
with $\cW = \phi_{ \bullet }( \cG )$, and we have
$\cW = \theta_{ \bullet }( \cL )$ by definition.  This means that
$\cW = \phi_{ \ast }( \cG ) = \theta_{ \ast }( \cL )$, and
since $\phi_{ \ast }$, $\theta_{ \ast }$, and $\lambda_{ \ast }$ are
full and faithful, \eqref{equ:Phi_Lambda_Gamma} then implies
$\lambda_{ \ast }( \cG ) = \cL$.
Hence $\lambda$ is an isomorphism
$( \Lambda,\cL ) \stackrel{ \lambda }{ \longrightarrow } ( \Gamma,\cG
)$ of $d$-homological pairs, which together with
\eqref{equ:Phi_Lambda_Gamma} gives the conclusion of the lemma.
\end{proof}

\begin{Proposition}
\label{pro:Psi5}
Let
$( \Phi,\cF ) \stackrel{ \phi_1 }{ \longrightarrow } ( \Gamma_1,\cG_1
)$ and
$( \Phi,\cF ) \stackrel{ \phi_2 }{ \longrightarrow } ( \Gamma_2,\cG_2
)$ be $d$-pseudoflat epimorphisms of $d$-homological pairs.  Then
$( \phi_1 )_{ \bullet }( \cG_1 ) = ( \phi_2 )_{ \bullet }( \cG_2 )$ if and
only if $\phi_1$ is equivalent to $\phi_2$.
\end{Proposition}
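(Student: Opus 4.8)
The plan is to reduce Proposition \ref{pro:Psi5} to Lemma \ref{lem:Psi4}, which already contains the substantive part; the rest is bookkeeping about the notion of equivalence. First I would record that equivalence of morphisms of $d$-homological pairs, as introduced in Definition \ref{def:morphisms_of_d-homological_pairs}, is an equivalence relation: reflexivity uses the identity isomorphism, symmetry uses that the inverse of an isomorphism of $d$-homological pairs is again one, and transitivity uses that a composite of two such isomorphisms is again one. I would then use these three facts freely.

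For the forward implication, assume $\phi_1$ is equivalent to $\phi_2$; I would simply unwind the definitions. The equivalence provides an isomorphism of $d$-homological pairs $\gamma \colon ( \Gamma_1,\cG_1 ) \rightarrow ( \Gamma_2,\cG_2 )$ with $\gamma\phi_1 = \phi_2$ as algebra homomorphisms. Functoriality of restriction of scalars gives $( \phi_2 )_{ \ast } = ( \phi_1 )_{ \ast } \circ \gamma_{ \ast }$ with $\gamma_{ \ast } \colon \mod( \Gamma_2 ) \rightarrow \mod( \Gamma_1 )$. Since $\gamma$ is an algebra isomorphism, $\gamma_{ \ast }$ is an equivalence, so $\gamma_{ \ast }( \cG_2 )$ is a $d$-cluster tilting subcategory of $\mod( \Gamma_1 )$; it is contained in $\cG_1$ because $\gamma$ is a morphism of $d$-homological pairs, hence it equals $\cG_1$ since one $d$-cluster tilting subcategory cannot be strictly contained in another. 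Therefore $( \phi_2 )_{ \bullet }( \cG_2 ) = ( \phi_1 )_{ \ast }\bigl( \gamma_{ \ast }( \cG_2 ) \bigr) = ( \phi_1 )_{ \bullet }( \cG_1 )$, which settles this direction.

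For the converse, suppose $( \phi_1 )_{ \bullet }( \cG_1 ) = ( \phi_2 )_{ \bullet }( \cG_2 )$ and call this common subcategory $\cW$; it is a functorially finite wide subcategory of $\cF$ by Proposition \ref{pro:Psi1}. Applying Lemma \ref{lem:Psi4} once with $\theta = \phi_1$ and once with $\theta = \phi_2$ shows that each of $\phi_1$ and $\phi_2$ is equivalent to a $d$-pseudoflat epimorphism obtained from $\cW$ via Construction \ref{con:Psi_inverse}, and then symmetry and transitivity give that $\phi_1$ is equivalent to $\phi_2$. The one point that needs care — and the closest thing to an obstacle here — is that Construction \ref{con:Psi_inverse} depends on a choice of left adjoint $i^{ \ast }$ to the inclusion $\cW \hookrightarrow \cF$, while the proof of Lemma \ref{lem:Psi4} uses the particular left adjoint built from $\theta$ itself. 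To glue the two applications of the lemma together I would observe that any two left adjoints of $\cW \hookrightarrow \cF$ are naturally isomorphic, so the associated modules $s = i^{ \ast }( \Phi_{ \Phi } )$ are isomorphic, the algebras $\Gamma = \End_{ \Phi }( s )$ are isomorphic compatibly with the structure maps out of $\Phi$, and the associated $d$-cluster tilting subcategories correspond under this isomorphism; hence all the $d$-pseudoflat epimorphisms attached to $\cW$ by Construction \ref{con:Psi_inverse} lie in a single equivalence class, and the argument closes.
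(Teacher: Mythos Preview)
Your proof is correct and follows the same approach as the paper, which declares the forward direction immediate and for the converse applies Lemma \ref{lem:Psi4} twice to show that $\phi_1$ and $\phi_2$ are each equivalent to the morphism from Construction \ref{con:Psi_inverse}, hence to each other. You are more explicit than the paper about the forward implication and about the choice-independence of Construction \ref{con:Psi_inverse} with respect to the left adjoint $i^{\ast}$, a point the paper glosses over.
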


\begin{proof}
If $\phi_1$ is equivalent to $\phi_2$ then
$( \phi_1 )_{ \bullet }( \cG_1 ) = ( \phi_2 )_{ \bullet }( \cG_2 )$ is
immediate.

Conversely, if
$( \phi_1 )_{ \bullet }( \cG_1 ) = ( \phi_2 )_{ \bullet }( \cG_2 )$
then set $\cW = ( \phi_1 )_{ \bullet }( \cG_1 )$.  Lemma
\ref{lem:Psi4}(ii) shows that $\phi_1$ and $\phi_2$ are both equivalent to
the $d$-pseudoflat epimorphism of $d$-homological pairs
$( \Phi,\cF ) \stackrel{ \phi }{ \longrightarrow } ( \Gamma,\cG )$ from
Construction \ref{con:Psi_inverse}, whence $\phi_1$ is equivalent to
$\phi_2$.
\end{proof}

\section{Proof of Theorem C}
\label{sec:Proof_of_Thm_C}

This section uses Theorem B to prove Theorem C.  The following definition is used repeatedly.

\begin{Definition}
[Semisimple categories, see {\cite[sec.\ 1]{H}}]
\label{def:semi-simple}
We say that an additive category is {\em semisimple} if the endomorphism ring of each object is semisimple.
\hfill $\Box$
\end{Definition}

Recall that $d \geqslant 1$ is a positive integer.  Let $Q$ be the quiver
$m \to \cdots\to2 \to 1$, where $m \geqslant 3$. It is shown in
\cite[thm.\ 3]{V} that a quotient of $kQ$ by an admissible ideal $I$ has global
dimension $d$ and admits an $d$-cluster tilting subcategory $\cF$ if
and only if $I = (\rad kQ)^{\ell}$, where
\begin{equation}\label{fraction}
\frac{m-1} {\ell}  = \frac d 2
\end{equation}
and $d$ is even in case $\ell > 2$.  Moreover, in this case the objects in $\cF$ are
precisely the direct sums of projectives and injectives. Our goal is to
classify all wide subcategories of $\cF$. For ${\ell} = 2$ it is easy to
see that there are no non-semisimple wide subcategories of $\cF$
except $\cF$ itself. Among the semisimple subcategories, all are
wide except the ones containing both the simple projective and the
simple injective module. So this case is clear. In what follows we
will therefore assume that $\ell >2$ and $d$ is even so that the fraction in (\ref{fraction}) is an integer.

Let us begin by fixing some notation. Set
$\Phi = \Phi_{m,{\ell}} := kQ/(\rad kQ)^{\ell}$, where $Q$ and
${\ell}$ are as above. For each $i \in Q_0$ we denote the
corresponding indecomposable projective $\Phi$-module by $p_i$ and the
corresponding indecomposable injective $\Phi$-module by $q_i$. Note
that for ${\ell} \leqslant i \leqslant m$ we have
$p_i = q_{i-{\ell}+1}$.

We consider the basic $d$-cluster tilting module
\[
f  = \bigoplus_{i=1}^{m+{\ell}-1} f_i
\]
where $f_i = p_i$ for each $1 \leqslant i \leqslant m$ and
$f_i = q_{i-{\ell}+1}$ for each $m+1 \leqslant i \leqslant
m+{\ell}-1$. As mentioned we are interested in wide subcategories of
the $d$-cluster tilting subcategory
$\cF = \add (f) \subseteq \mod(\Phi)$. For an additive subcategory
$\cW \subseteq \cF$ we denote the number of isomorphism classes
of indecomposables in $\cW$ by $|\cW|$. In particular,
$|\cF| = m+{\ell}-1 = {\ell}(d+2)/2$. As a guiding example consider
the following.

\begin{Example}\label{Nak94} 
Let $d = 4$, ${\ell} =4$, and $m = 9$. Then $(9-1)/4 = 2 = 4/2$ so
that (\ref{fraction}) holds. Below we give the Auslander--Reiten
quiver of $\Phi_{9,4}$, with the modules $f_i$ indicated.

\[
\tikzstyle{nct} = [draw, circle, minimum size=.5cm, node distance=1.75cm]
\tikzstyle{nct2}=[draw, rectangle, minimum size=.5cm, node distance=1.75cm]
\begin{tikzpicture}[scale=0.9, every node/.style={transform shape}]
\node(11) at (1,1) {$f_1$};
\node(21) at (2,1) {$\bullet$};
\node(31) at (3,1) {$\bullet$};
\node(41) at (4,1) {$\bullet$};
\node(51) at (5,1) {$\bullet$};
\node(61) at (6,1) {$\bullet$};
\node(71) at (7,1) {$\bullet$};
\node(81) at (8,1) {$\bullet$};
\node(91) at (9,1) {$f_{12}$};

\node(12) at (1.5,2) {$f_2$};
\node(22) at (2.5,2) {$\bullet$};
\node(32) at (3.5,2) {$\bullet$};
\node(42) at (4.5,2) {$\bullet$};
\node(52) at (5.5,2) {$\bullet$};
\node(62) at (6.5,2) {$\bullet$};
\node(72) at (7.5,2) {$\bullet$};
\node(82) at (8.5,2) {$f_{11}$};

\node(13) at (2,3) {$f_3$};
\node(23) at (3,3) {$\bullet$};
\node(33) at (4,3) {$\bullet$};
\node(43) at (5,3) {$\bullet$};
\node(53) at (6,3) {$\bullet$};
\node(63) at (7,3) {$\bullet$};
\node(73) at (8,3) {$f_{10}$};

\node(14) at (2.5,4) {$f_4$};
\node(24) at (3.5,4) {$f_5$};
\node(34) at (4.5,4) {$f_6$};
\node(44) at (5.5,4) {$f_7$};
\node(54) at (6.5,4) {$f_8$};
\node(64) at (7.5,4) {$f_9$};

\draw[->] (11) -- (12);
\draw[->] (12) -- (13);
\draw[->] (13) -- (14);

\draw[->] (12) -- (21);
\draw[->] (13) -- (22);
\draw[->] (14) -- (23);

\draw[->] (22) -- (31);
\draw[->] (23) -- (32);
\draw[->] (24) -- (33);

\draw[->] (32) -- (41);
\draw[->] (33) -- (42);
\draw[->] (34) -- (43);

\draw[->] (42) -- (51);
\draw[->] (43) -- (52);
\draw[->] (44) -- (53);

\draw[->] (52) -- (61);
\draw[->] (53) -- (62);
\draw[->] (54) -- (63);

\draw[->] (62) -- (71);
\draw[->] (63) -- (72);

\draw[->] (72) -- (81);

\draw[->] (21) -- (22);
\draw[->] (22) -- (23);
\draw[->] (23) -- (24);

\draw[->] (31) -- (32);
\draw[->] (32) -- (33);
\draw[->] (33) -- (34);

\draw[->] (41) -- (42);
\draw[->] (42) -- (43);
\draw[->] (43) -- (44);

\draw[->] (51) -- (52);
\draw[->] (52) -- (53);
\draw[->] (53) -- (54);

\draw[->] (61) -- (62);
\draw[->] (62) -- (63);
\draw[->] (63) -- (64);

\draw[->] (71) -- (72);
\draw[->] (72) -- (73);

\draw[->] (81) -- (82);

\draw[->] (64) -- (73);
\draw[->] (73) -- (82);
\draw[->] (82) -- (91);

\end{tikzpicture}
\]
\end{Example}\hfill $\Box$

As a representation of $Q$ the module $f_i$ assigns to vertex $j$ the
vector space $k$ if $i-{\ell} +1 \leqslant j \leqslant i$ and $0$
otherwise. Moreover the corresponding maps $k \to k$ are all equal to
$1$. For example, as a representation, $f_6$ in Example \ref{Nak94} is 
\[
0 \to 0 \to 0 \to k \overset{1}{\to}  k \overset{1}{\to} k \overset{1}{\to} k \to 0 \to 0,
\]
whereas $f_3$ is
\[
0 \to 0 \to 0 \to 0 \to 0 \to 0 \to k \overset{1}{\to} k \overset{1}{\to} k.
\]

Considering these representations we readily deduce the following two basic observations on morphisms in $\cF$: First, the dimensions of morphism spaces in $\cF$ are given by the formula
\begin{equation}
\label{equ:morphisms}
\dim_k \Hom_\Phi(f_i,f_j) =
\begin{cases} 
1 & \mbox{if } 0 \leqslant j-i \leqslant {\ell}-1,\\ 
0 & \mbox{otherwise.}  
\end{cases}
\end{equation}
Secondly, if $i \leqslant q \leqslant j$, then each morphism from $f_i$ to $f_j$ factors through $f_q$. 

To compute extension groups we will employ higher Auslander--Reiten duality:
\[
\Ext^d_{\Phi}(f,f') \cong \dual\!\sHom_{\Phi}(\tau_d^-f',f),
\]
where $\tau_d^{-} := \tau^-\Omega^{-(d-1)}$ and $\tau^{-}$ is the inverse Auslander--Reiten translation.  By \cite[lem.\ 4.8]{V} we have
\[
\tau_d^{-}f_i = f_{i+m}
\]
for each $1 \leqslant i \leqslant {\ell}-1$.

Let us start by describing the semisimple wide subcategories of
$\cF$.  See Definition \ref{def:semi-simple} for the notion of
semisimplicity. 
\begin{Proposition}\label{semisimple}
Let $\cW \subseteq \cF$ be an additive subcategory. Then $\cW$ is semisimple and wide if and only if for all distinct $f_i,f_j \in \cW$ we have ${\ell} \leqslant |i-j| \leqslant m-1$.
\end{Proposition}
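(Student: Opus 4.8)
The statement is an "iff" between a combinatorial condition on the indices appearing in $\cW$ and the property that $\cW$ is semisimple \emph{and} wide. I would separate the two directions and, within each, treat semisimplicity and wideness separately, since semisimplicity is governed purely by the $\Hom$-formula \eqref{equ:morphisms} while wideness additionally involves $\Ext^d$ via higher Auslander--Reiten duality.

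\textbf{First, the characterisation of semisimplicity.} For an additive subcategory $\cW = \add\big(\bigoplus_{i \in S} f_i\big)$ with $S \subseteq \{1,\dots,m+\ell-1\}$, the endomorphism ring of $\bigoplus_{i\in S} f_i$ is semisimple if and only if $\Hom_\Phi(f_i,f_j) = 0$ for all distinct $i,j \in S$ (each $f_i$ being a brick by \eqref{equ:morphisms}, so $\End_\Phi(f_i) = k$; any nonzero map between distinct indecomposables would, together with a factorisation as in the second basic observation, produce a nilpotent nonzero radical element, destroying semisimplicity — and conversely if all cross-Homs vanish then $\End_\Phi(\bigoplus f_i) \cong \prod_{i\in S} k$). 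By \eqref{equ:morphisms}, $\Hom_\Phi(f_i,f_j) = 0$ for distinct $i,j$ is exactly the condition $|i-j| \geqslant \ell$. So $\cW$ is semisimple $\iff$ $|i-j| \geqslant \ell$ for all distinct $f_i,f_j \in \cW$. This already gives the lower bound in the claimed inequality.

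\textbf{Second, given semisimplicity, characterising wideness.} Assume $\cW$ is semisimple, so all cross-Homs vanish and all cross-$d$-kernels and $d$-cokernels of morphisms in $\cW$ are forced (the only morphisms in $\cW$ are scalar endomorphisms, whose $d$-kernels/$d$-cokernels can be taken to be zero or the identity complex, hence trivially in $\cW$). Thus conditions (i) and (ii) of Definition \ref{def:wide} are automatic, and wideness reduces to condition (iii): closure under $d$-extensions. A $d$-exact sequence $0 \to f_i \to m_d \to \cdots \to m_1 \to f_j \to 0$ in $\cF$ represents a class in $\Ext^d_\Phi(f_j,f_i)$, and by \cite[prop.\ A.1]{I3} plus semisimplicity one checks that $\cW$ is closed under $d$-extensions precisely when $\Ext^d_\Phi(f_j,f_i) = 0$ for all $f_i, f_j \in \cW$ (including $i=j$): if some such $\Ext^d$ is nonzero, the corresponding $d$-extension has middle terms in $\cF$ but cannot be Yoneda equivalent to one with all middle terms in the semisimple $\cW$, since a nonsplit $d$-extension between objects of a semisimple subcategory forces a genuinely larger category. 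Now compute: by higher Auslander--Reiten duality $\Ext^d_\Phi(f_j,f_i) \cong \dual\sHom_\Phi(\tau_d^- f_i, f_j)$, and $\tau_d^- f_i = f_{i+m}$ for $1 \leqslant i \leqslant \ell-1$ (and is $0$ for $i \geqslant \ell$, since then $f_i$ is injective). So for injective $f_i$ the $\Ext^d$ vanishes automatically; for $1 \leqslant i \leqslant \ell-1$ one has $\Ext^d_\Phi(f_j,f_i) \cong \dual\sHom_\Phi(f_{i+m}, f_j)$, and $\sHom$ (stable Hom) equals $\Hom$ here since $f_j$ for $j$ in the relevant range is not projective — or one argues directly with the dimension formula extended to $f_{i+m}$. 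Using $\dim_k\Hom_\Phi(f_{i+m}, f_j) = 1 \iff 0 \leqslant j - (i+m) \leqslant \ell-1$, i.e. $m \leqslant j - i \leqslant m + \ell - 1$. Combined with the already-present range restrictions $1 \leqslant i \leqslant \ell - 1$ and $j \leqslant m + \ell - 1$, nonvanishing of $\Ext^d_\Phi(f_j,f_i)$ happens exactly when $i < \ell$ and $j - i \geqslant m$, i.e. (given $j - i \geqslant \ell$ from semisimplicity, which is weaker) exactly when $|i - j| \geqslant m$. Hence closure under $d$-extensions holds $\iff$ $|i-j| \leqslant m-1$ for all $f_i, f_j \in \cW$. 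Note this also rules out $i = j$: there is no $\Ext^d_\Phi(f_i,f_i) \neq 0$ issue in the allowed range, which one checks is consistent.

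\textbf{Assembling.} Combining the two characterisations: $\cW$ is semisimple and wide $\iff$ ($|i-j| \geqslant \ell$ for all distinct $f_i,f_j \in \cW$) \emph{and} ($|i-j| \leqslant m-1$ for all $f_i, f_j \in \cW$) $\iff$ $\ell \leqslant |i-j| \leqslant m-1$ for all distinct $f_i, f_j \in \cW$, which is the assertion.

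\textbf{Main obstacle.} The delicate point is the precise equivalence, \emph{under the semisimplicity hypothesis}, between "$\cW$ closed under $d$-extensions" and "$\Ext^d_\Phi(f_j,f_i) = 0$ for all $f_i,f_j \in \cW$" — i.e. verifying Definition \ref{def:wide}(iii) in both directions. One direction (vanishing $\Rightarrow$ closure) needs the realisability of $d$-extensions with middle terms in $\cF$ from \cite[prop.\ A.1]{I3}; the other (a nonzero $\Ext^d$ obstructs wideness) needs that a nonsplit $d$-extension in $\cM$ between two semisimple-subcategory objects cannot be Yoneda equivalent to one living inside that subcategory, which should follow because inside a semisimple $d$-abelian subcategory every $d$-exact sequence splits (the middle being a sum of the ends), contradicting nontriviality of the class. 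I would also need the small technical check that stable Hom coincides with Hom in the relevant range so that the Auslander--Reiten duality formula can be read off directly from \eqref{equ:morphisms} applied to the extra modules $f_{i+m}$.
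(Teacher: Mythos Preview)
Your proposal is correct and follows essentially the same route as the paper: characterise semisimplicity via the $\Hom$-formula \eqref{equ:morphisms} to get the lower bound $\ell \leqslant |i-j|$, then reduce wideness to vanishing of $\Ext^d_\Phi$ between the $f_i$'s (invoking \cite[prop.\ A.1]{I3}), and compute the latter via higher Auslander--Reiten duality and $\tau_d^- f_i = f_{i+m}$ to obtain the upper bound $|i-j| \leqslant m-1$. The paper's proof is terser --- it does not spell out why conditions (i) and (ii) of Definition \ref{def:wide} are automatic in the semisimple case, nor does it linger on the stable $\Hom$ versus $\Hom$ point --- but the logical structure and all the key inputs are the same.
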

Combinatorially we can interpret Proposition \ref{semisimple} as
follows: If we identify the modules $f_i$ with the vertices in a
cyclic graph such that $f_i$ and $f_{i+1}$ are neighbours, then a
semisimple wide subcategory $\cW$ corresponds to a subset of vertices
where each vertex has distance at least ${\ell}$ to each of the others.

\begin{proof}
By our description of morphisms in $\cF$, the subcategory $\cW$ is
semisimple if and only if ${\ell} \leqslant |i-j|$ for all distinct
$f_i,f_j \in \cW$.  It follows by \cite[prop.\ A.1]{I3} that $\cW$ is
then wide if and 
only if $\Ext^d_{\Phi}(f_i,f_j) = 0$ for all $f_i,f_j \in \cW$. It is
enough to check this condition for $m < i$ and $j < {\ell}$
and by higher Auslander--Reiten duality it can be replaced by
$\dual\!\sHom_{\Phi}(f_{j+m},f_i) = 0$, which holds if and only if
$j+m > i$, i.e., $i-j \leqslant m-1$. Now $|i-j| > m-1$ is only
possible if $m < i$ and $j < {\ell}$, or $m < j$ and $i < {\ell}$, so we
conclude that $\cW$ is wide if and only if $|i-j| \leqslant m-1$ for
all $i$ and $j$.
\end{proof}

In order to deal with the non-semisimple subcategories we need to get some control over the $d$-exact sequences in $\cF$.  Let
$f_i \stackrel{ \mu }{ \rightarrow } f_j$ be a non-zero morphism where
$i \neq j$. Then $0 \leqslant j-i \leqslant {\ell}-1$. Note also that
$\mu$ is a monomorphism if and only if $j \leqslant {\ell}$ and an
epimorphism if and only if $i \geqslant m$. In all other cases we can
extend $f$ to an exact sequence
\[
f_{j-{\ell}} \overset{\lambda}\to f_i \overset{\mu}{\to} f_j \overset{\nu}\to f_{i+{\ell}}.
\]
So if we introduce the convention that $f_q = 0$ for each
$q \leqslant 0$ and each $q \geqslant m+{\ell}$, then we have the above exact
sequence even when $\mu$ is a monomorphism or an epimorphism. Note
that we do not care what the maps $\lambda$ and $\nu$ actually are, as
they are determined up to a non-zero scalar by Equation
\eqref{equ:morphisms}. 
Furthermore, the morphism $\lambda$ is in fact an
$\cF$-cover of $\Ker \mu$. Indeed, if
$\theta : f_q \to \Ker \mu$ is nonzero, then $q < m$ so $f_q$ is 
projective and $\theta$ factors through $\lambda$. Moreover, $\lambda$ is right minimal since $f_{j-{\ell}}$ is indecomposable
(or zero in case $\Ker \mu = 0$). Similarly, we find that $\nu$ is an
$\cF$-envelope of $\Cok \mu$.

Repeating this argument we obtain an acyclic complex 
\[
\mathbb{E}(\mu): \cdots \to f_{j-2{\ell}} \to f_{i-{\ell}} \to f_{j-{\ell}} \to f_i \overset{\mu}{\to} f_j \to f_{i+{\ell}} {\to} f_{j+{\ell}} \to f_{i+2{\ell}} \to \cdots
\]
Since $|\cF| = {\ell}(d+2)/2$, the complex $\mathbb{E}( \mu )$ has exactly $d+2$ non-zero terms and so it is a $d$-exact sequence of $\cF$. Truncating $\mathbb{E}( \mu )$ we find that
\[
\cdots \to f_{j-2{\ell}} \to f_{i-{\ell}} \to f_{j-{\ell}} \to f_i
\]
is the minimal $d$-kernel of $\mu$ and
\[
f_j \to f_{i+{\ell}} {\to} f_{j+{\ell}} \to f_{i+2{\ell}} \to \cdots
\]
is the minimal $d$-cokernel of $\mu$. With this understanding of $d$-kernels and $d$-cokernels we can prove the following result.

\begin{Lemma}\label{necessaryForWide}
Let $\cW \subseteq \cF$ be an additive subcategory which is not
semisimple.  Assume that $\cW$ is closed under minimal $d$-kernels and
minimal $d$-cokernels.  Then $0 \neq f_q \in \cW$ implies
$f_{q+r{\ell}} \in \cW$ for each $r \in \mathbb{Z}$. 
\end{Lemma}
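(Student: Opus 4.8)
The plan is to use non-semisimplicity to produce a single morphism between two indecomposables of $\cW$ whose indices are close, and then to exploit the explicit $d$-exact sequences $\mathbb{E}(-)$ together with the closure hypotheses. First I would invoke the description of morphisms in $\cF$ from \eqref{equ:morphisms}: an additive subcategory of $\cF$ is semisimple exactly when any two of its distinct indecomposables $f_a,f_b$ satisfy $|a-b| \geqslant \ell$, so since $\cW$ is not semisimple there are distinct $f_i,f_j \in \cW$ with $|i-j| \leqslant \ell-1$; after relabelling so that $i<j$, we get $1 \leqslant j-i \leqslant \ell-1$ and a nonzero morphism $\mu\colon f_i \to f_j$ in $\cW$. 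By the discussion preceding the lemma, the two truncations of $\mathbb{E}(\mu)$ are the minimal $d$-kernel and the minimal $d$-cokernel of $\mu$, so the closure hypothesis forces every term of $\mathbb{E}(\mu)$ to lie in $\cW$. Inspecting the indices occurring in $\mathbb{E}(\mu)$, this says that $f_a \in \cW$ for every integer $a$ with $1 \leqslant a \leqslant m+\ell-1$ such that $a \equiv i$ or $a \equiv j \pmod{\ell}$.

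The step I expect to require the most care is that a single complex $\mathbb{E}(\mu)$ only involves two residue classes modulo $\ell$, and hence does not on its own reach an arbitrary indecomposable of $\cW$. To bridge this, fix a nonzero $f_q \in \cW$, so $1 \leqslant q \leqslant m+\ell-1$, and let $a_0 \in \{1,\dots,\ell\}$ be the least positive integer with $a_0 \equiv i \pmod{\ell}$. If $q<a_0$, set $c := a_0$; otherwise let $c$ be the largest integer of the form $a_0+r\ell$ with $r \geqslant 0$ and $c \leqslant q$. In either case $1 \leqslant c \leqslant m+\ell-1$ and $c \equiv i \pmod{\ell}$, so $f_c \in \cW$ by the previous paragraph, and $|q-c| \leqslant \ell-1$. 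If $c=q$, then $q \equiv i \pmod{\ell}$, and every nonzero $f_{q+r\ell}$ then has index congruent to $i$ modulo $\ell$ and lying in $\{1,\dots,m+\ell-1\}$, so $f_{q+r\ell} \in \cW$; since the zero module is trivially in $\cW$, the lemma holds for such $q$.

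In the remaining case $1 \leqslant |q-c| \leqslant \ell-1$, so there is a nonzero morphism $\nu$ between $f_q$ and $f_c$ (from the term of smaller index to the one of larger index), and $\nu$ is a morphism in $\cW$ since both $f_q$ and $f_c$ lie in $\cW$. Applying the argument of the first paragraph with $\nu$ in place of $\mu$: the two truncations of $\mathbb{E}(\nu)$ are its minimal $d$-kernel and minimal $d$-cokernel, hence every term of $\mathbb{E}(\nu)$ lies in $\cW$, and among these terms are all the nonzero modules of the form $f_{q+r\ell}$ with $r \in \mathbb{Z}$. Therefore $f_{q+r\ell} \in \cW$ for every $r \in \mathbb{Z}$, proving the lemma. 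The only remaining work is elementary index bookkeeping: checking that the terms of $\mathbb{E}(-)$ in a given residue class modulo $\ell$ really do exhaust the nonzero $f_a$ in that class, and that the choice of $c$ behaves as stated.
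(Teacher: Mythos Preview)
Your proposal is correct and follows essentially the same route as the paper: use non-semisimplicity to find $f_i,f_j\in\cW$ with $1\leqslant j-i\leqslant\ell-1$, apply closure under minimal $d$-kernels and $d$-cokernels via the complex $\mathbb{E}(\mu)$ to obtain all $\ell$-shifts of $f_i$ and $f_j$ in $\cW$, then for an arbitrary $f_q\in\cW$ locate a nearby element $f_c$ in the residue class of $i$ and repeat the argument with the morphism between $f_q$ and $f_c$. The paper does the same thing more tersely, writing $t=\lfloor (q-i)/\ell\rfloor$ and observing that one of $f_{i+t\ell}$, $f_{i+(t+1)\ell}$ is nonzero and admits a nonzero morphism to or from $f_q$; your explicit construction of $c$ via $a_0$ is just a more detailed version of this floor computation.
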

\begin{proof}
Since $\cW$ is not semisimple there are $f_i, f_j \in \cW$ such that $1 \leqslant j-i \leqslant {\ell}-1$. By the discussion above we have that $f_{i+t{\ell}}, f_{j+t{\ell}} \in \cW$ for each $t \in \mathbb{Z}$. Pick $t = \lfloor \frac{q-i}{{\ell}} \rfloor$. Then we find that $f_{i+t{\ell}} \neq 0$ or $f_{i+t{\ell} +{\ell}} \neq 0$ and so there is a non-zero morphism $f_{i+t{\ell}} \to f_q$ or $f_q \to f_{i+t{\ell}+{\ell}}$. In either case $f_{q+r{\ell}} \in \cW$ for each $r \in \mathbb{Z}$.
\end{proof}

\begin{Definition}
[$\ell$-periodic subcategories]
\label{def:l-periodic}
We say that an additive subcategory $\cW \subseteq \cF$ is
${\ell}$-periodic if it is not semisimple and $0 \neq f_q \in \cW$
implies $f_{q+r{\ell}} \in \cW$ for each $r \in \mathbb{Z}$.
\hfill $\Box$
\end{Definition}
By Lemma \ref{necessaryForWide}, each non-semisimple wide
subcategory of $\cF$ is ${\ell}$-periodic. We will use Theorem B
to show that, conversely, each ${\ell}$-periodic subcategory is wide.
We begin with some additional notation.

For any $i,j$ let
$\cF_{ij}= \add\{f_q \mid i \leqslant q \leqslant j\}$.  Set
$\cW_{ij} = \cW \cap \cF_{ij}$ for each additive subcategory
$\cW \subseteq \cF$. Now assume that $\cW$ is ${\ell}$-periodic. Then
$\cW$ is uniquely determined by $\cW_{i,i+{\ell}-1}$ for each
$1 \leqslant i \leqslant m$. Moreover the number
${\ell}' := |\cW_{i,i+{\ell}-1}|$ is independent of $i$ and
$|\cW| = {\ell}'(d+2)/2$. Note that ${\ell}' \geqslant 2$ as $\cW$ is
not semisimple. Set $m' = {\ell}'d/2+1$ so that
$|\cW| = m'+{\ell}'-1$. Then there is a unique increasing map
$\iota : \{1,\ldots ,m'+{\ell}'-1\} \to \{1,\ldots ,m+{\ell}-1\}$ such
that $f' = \bigoplus_{i' = 1}^{m'+{\ell}'-1} f_{\iota(i')}$ is an
additive generator of $\cW$. Moreover, this map satisfies
\[
\iota(i'+{\ell}') = \iota(i')+{\ell}.
\]
Next we introduce a candidate for the object $s$ in Theorem B.
Set $s_{i'} = f_{\iota(i')}$ for each $1 \leqslant i' \leqslant m'$.
Further set
\[
  s = \bigoplus_{i' = 1}^{m'}s_{i'}
  \;\;,\;\;
  \Gamma = \End_{\Phi}(s).
\] 

\begin{Lemma}\label{endomorphismAlgebra} Assume that $\cW$ is ${\ell}$-periodic and use the notation above. Then
there is an algebra isomorphism $\Gamma \cong \Phi_{m',{\ell}'}$ and so there is a unique $d$-cluster tilting subcategory in $\mod( \Gamma )$. 
\end{Lemma}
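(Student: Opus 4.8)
The plan is to exhibit an explicit quiver presentation of $\Gamma = \End_\Phi(s)$ and match it with $kQ'/(\rad kQ')^{\ell'}$, where $Q'$ is the linear quiver $m' \to \cdots \to 2 \to 1$. First I would set up the combinatorial bookkeeping: the indecomposable summands of $s$ are $s_{1}, \ldots, s_{m'}$ with $s_{i'} = f_{\iota(i')}$, and $\iota$ is strictly increasing with $\iota(i'+\ell') = \iota(i')+\ell$. The morphism formula \eqref{equ:morphisms} says $\dim_k\Hom_\Phi(f_a,f_b) = 1$ exactly when $0 \leqslant b-a \leqslant \ell-1$ and is $0$ otherwise; so $\dim_k\Hom_\Phi(s_{i'},s_{j'}) = 1$ if and only if $0 \leqslant \iota(j') - \iota(i') \leqslant \ell-1$, which (since $\iota$ is increasing and $\iota(i'+\ell')-\iota(i') = \ell > \ell - 1$) happens precisely when $0 \leqslant j' - i' \leqslant \ell'-1$. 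Thus $\dim_k\Hom_\Phi(s_{i'},s_{j'}) = 1$ iff $0 \leqslant j'-i' \leqslant \ell'-1$ and $0$ otherwise — exactly the $\Hom$-dimension pattern of $\Phi_{m',\ell'}$ described by \eqref{equ:morphisms} applied to that algebra.

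The next step is to upgrade this numerical coincidence to an algebra isomorphism. Pick, for each $i'$, a nonzero morphism $\alpha_{i'} : s_{i'+1} \to s_{i'}$ (possible whenever $1 \leqslant \iota(i'+1)-\iota(i') \leqslant \ell-1$, which holds since $\iota$ is strictly increasing and the gap is at most $\ell-1$). Using the second basic observation recalled in the text — that any morphism $f_a \to f_b$ with $a \leqslant q \leqslant b$ factors through $f_q$ — one sees that every nonzero morphism $s_{j'} \to s_{i'}$ with $i' \leqslant j'$ is a nonzero scalar multiple of the composite $\alpha_{i'}\alpha_{i'+1}\cdots\alpha_{j'-1}$, because each intermediate $s_q$ with $i' \leqslant q \leqslant j'$ lies strictly between $\iota(i')$ and $\iota(j')$ in index. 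Hence the arrows $\alpha_{i'}$ generate $\Gamma$ as a $k$-algebra, and the path of length $\ell'$ from vertex $j'+\ell'$ to $j'$ must vanish because $\iota(j'+\ell')-\iota(j') = \ell$ forces $\Hom_\Phi(s_{j'+\ell'},s_{j'}) = 0$, while every shorter path is nonzero. Counting dimensions, $\dim_k\Gamma = \sum_{i',j'}\dim_k\Hom_\Phi(s_{i'},s_{j'}) = \sum_{i'=1}^{m'}\min(\ell', m'-i'+1)$, which is exactly $\dim_k \Phi_{m',\ell'}$; so the surjection $kQ'/(\rad kQ')^{\ell'} \twoheadrightarrow \Gamma$ sending the canonical arrows to the $\alpha_{i'}$ is an isomorphism.

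Finally, once $\Gamma \cong \Phi_{m',\ell'}$, the uniqueness of the $d$-cluster tilting subcategory of $\mod(\Gamma)$ follows from the cited result \cite[thm.\ 3]{V}: we must only check that $\Phi_{m',\ell'}$ satisfies the fraction condition $(m'-1)/\ell' = d/2$, and this is immediate from $m' = \ell'd/2 + 1$, which was arranged precisely so that $m'-1 = \ell'd/2$. (When $\ell' = 2$ the result is the degenerate case already noted; the general statement of \cite[thm.\ 3]{V} covers it.) The only genuine subtlety — the step I expect to be the main obstacle — is the argument that the $\alpha_{i'}$ generate $\Gamma$ and that the relations are exactly the length-$\ell'$ zero relations, i.e.\ verifying there are no extra relations; this is where one must use the factorization property through intermediate indecomposables carefully, rather than merely counting dimensions. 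Matching dimensions then closes the argument that the obvious presentation map is an isomorphism.
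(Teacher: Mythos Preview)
Your approach is essentially the same as the paper's: both compute the $\Hom$ dimensions between the $s_{i'}$ via the periodicity relation $\iota(i'+\ell') = \iota(i')+\ell$, invoke the factorisation-through-intermediates property, and conclude $\Gamma \cong \Phi_{m',\ell'}$; the paper simply says ``from this it follows'' where you spell out the surjection $kQ'/(\rad kQ')^{\ell'} \twoheadrightarrow \Gamma$ and the dimension count, which is a legitimate way to make that step precise.

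One slip to fix: by \eqref{equ:morphisms} the nonzero morphisms go from $s_{i'}$ to $s_{j'}$ when $j' \geqslant i'$, not the other way, so your arrows should be $\alpha_{i'}:s_{i'}\to s_{i'+1}$ and your composites $s_{i'}\to s_{j'}$; your stated justification (``possible whenever $1\leqslant \iota(i'+1)-\iota(i')\leqslant \ell-1$'') already matches this correct direction, so the reversal is only in the displayed source/target and does not affect the argument.
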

\begin{proof}
First we observe that if $0 \leqslant j'-i' \leqslant {\ell}'-1$ then $\dim_k \Hom_\Phi(f_{\iota(i')}, f_{\iota(j')}) = 1$ because $f_{\iota(i')}, f_{\iota(j')} \in \cW_{\iota(i'),\iota(i')+{\ell}-1}$.  On the other hand, from $\iota(i'+{\ell}') = \iota(i')+{\ell}$ we obtain $\Hom_\Phi(f_{\iota(i')}, f_{\iota(i'+{\ell}')})=0$ and more generally $\Hom_\Phi(f_{\iota(i')}, f_{\iota(j')})=0$ if $j'-i' \geqslant {\ell}'$ or $i' > j'$. Furthermore if $i' \leqslant q' \leqslant j'$, then $\iota(i')\leqslant \iota(q') \leqslant \iota(j')$ so each morphism from $f_{\iota(i')}$ to $f_{\iota(j')}$ factors through $f_{\iota(q')}$. From this it follows that $\Gamma \cong \Phi_{m',{\ell}'}$.
\end{proof}
Now we are ready to show that $\cW$ is wide. To illustrate let us
consider Example \ref{Nak94}. Below we have indicated a $4$-periodic subcategory $\cW$ by encircling the corresponding indecomposables.
\[
\tikzstyle{nct} = [draw, circle, minimum size=.7cm, node distance=1.75cm]
\tikzstyle{nct2}=[draw, rectangle, minimum size=.5cm, node distance=1.75cm]
\begin{tikzpicture}[scale=0.9, every node/.style={transform shape}]
\node(11) at (1,1) {$f_1$};
\node(21) at (2,1) {$\bullet$};
\node(31) at (3,1) {$\bullet$};
\node(41) at (4,1) {$\bullet$};
\node(51) at (5,1) {$\bullet$};
\node(61) at (6,1) {$\bullet$};
\node(71) at (7,1) {$\bullet$};
\node(81) at (8,1) {$\bullet$};
\node(91) at (9,1) {$f_{12}$};

\node(12) at (1.5,2) {$f_2$};
\node[nct](12) at (1.5,2) {$ $};
\node(22) at (2.5,2) {$\bullet$};
\node(32) at (3.5,2) {$\bullet$};
\node(42) at (4.5,2) {$\bullet$};
\node(52) at (5.5,2) {$\bullet$};
\node(62) at (6.5,2) {$\bullet$};
\node(72) at (7.5,2) {$\bullet$};
\node(82) at (8.5,2) {$f_{11}$};
\node[nct](82) at (8.5,2) {$ $};

\node(13) at (2,3) {$f_3$};
\node[nct](13) at (2,3) {$ $};
\node(23) at (3,3) {$\bullet$};
\node(33) at (4,3) {$\bullet$};
\node(43) at (5,3) {$\bullet$};
\node(53) at (6,3) {$\bullet$};
\node(63) at (7,3) {$\bullet$};
\node(73) at (8,3) {$f_{10}$};
\node[nct](73) at (8,3) {$ $};

\node(14) at (2.5,4) {$f_4$};
\node(24) at (3.5,4) {$f_5$};
\node(34) at (4.5,4) {$f_6$};
\node[nct](34) at (4.5,4) {$ $};
\node(44) at (5.5,4) {$f_7$};
\node[nct](44) at (5.5,4) {$ $};
\node(54) at (6.5,4) {$f_8$};
\node(64) at (7.5,4) {$f_9$};

\draw[->] (11) -- (12);
\draw[->] (12) -- (13);
\draw[->] (13) -- (14);

\draw[->] (12) -- (21);
\draw[->] (13) -- (22);
\draw[->] (14) -- (23);

\draw[->] (22) -- (31);
\draw[->] (23) -- (32);
\draw[->] (24) -- (33);

\draw[->] (32) -- (41);
\draw[->] (33) -- (42);
\draw[->] (34) -- (43);

\draw[->] (42) -- (51);
\draw[->] (43) -- (52);
\draw[->] (44) -- (53);

\draw[->] (52) -- (61);
\draw[->] (53) -- (62);
\draw[->] (54) -- (63);

\draw[->] (62) -- (71);
\draw[->] (63) -- (72);

\draw[->] (72) -- (81);

\draw[->] (21) -- (22);
\draw[->] (22) -- (23);
\draw[->] (23) -- (24);

\draw[->] (31) -- (32);
\draw[->] (32) -- (33);
\draw[->] (33) -- (34);

\draw[->] (41) -- (42);
\draw[->] (42) -- (43);
\draw[->] (43) -- (44);

\draw[->] (51) -- (52);
\draw[->] (52) -- (53);
\draw[->] (53) -- (54);

\draw[->] (61) -- (62);
\draw[->] (62) -- (63);
\draw[->] (63) -- (64);

\draw[->] (71) -- (72);
\draw[->] (72) -- (73);

\draw[->] (81) -- (82);

\draw[->] (64) -- (73);
\draw[->] (73) -- (82);
\draw[->] (82) -- (91);

\end{tikzpicture}
\]
We have $\ell' = 2$, $m' = 5$.  The module $s$ is given by 
\[
s_1 = f_2 \;\;,\;\;
s_2 = f_3 \;\;,\;\;
s_3 = f_6 \;\;,\;\;
s_4 = f_7 \;\;,\;\;
s_5 = f_{10}.
\]
Consider the conditions of Theorem B.  Using higher Auslander--Reiten
duality one can rea\-di\-ly check that
$\Ext^{\geqslant 1}_{\Phi}(s,s) = 0$. The modules $f_2$, $f_3$, $f_6$,
$f_7$ and $f_{10}$ trivially permit exact sequences as in Theorem B,
condition (iii).  So it remains to deal with $f_{11}$, which has the
exact sequence
\begin{equation}
\label{equ:Martins_sequence}
  0 \to s_1\to s_2 \to s_3 \to s_4 \to s_5 \to f_{11} \to 0.
\end{equation}
Now consider the Auslander--Reiten quiver of $\Gamma  \cong \Phi_{5,2}$ with the indecomposables of its $d$-cluster tilting subcategory $\cG$ labelled $f'_{i'}$ for $1 \leqslant i' \leqslant 6$.

\[
\tikzstyle{nct} = [draw, circle, minimum size=.5cm, node distance=1.75cm]
\tikzstyle{nct2}=[draw, rectangle, minimum size=.5cm, node distance=1.75cm]
\begin{tikzpicture}[scale=0.9, every node/.style={transform shape}]
\node(11) at (1,1) {$f'_{1}$};
\node(21) at (2,1) {$\bullet$};
\node(31) at (3,1) {$\bullet$};
\node(41) at (4,1) {$\bullet$};
\node(51) at (5,1) {$f'_{6}$};

\node(12) at (1.5,2) {$f'_{2}$};
\node(22) at (2.5,2) {$f'_{3}$};
\node(32) at (3.5,2) {$f'_{4}$};
\node(42) at (4.5,2) {$f'_{5}$};

\draw[->] (11) -- (12);
\draw[->] (12) -- (21);
\draw[->] (22) -- (31);
\draw[->] (32) -- (41);
\draw[->] (42) -- (51);
\draw[->] (21) -- (22);
\draw[->] (31) -- (32);
\draw[->] (41) -- (42);

\end{tikzpicture}
\]
By definition $\Hom_{\Phi}(s,s_{i'}) = f'_{i'}$. Moreover, applying $\Hom_{\Phi}(s,-)$ to the sequence \eqref{equ:Martins_sequence},
we obtain
\[
0 \to f'_1\to f'_2 \to f'_3 \to f'_4 \to f'_5 \to \Hom_{\Phi}(s,f_{11}) \to 0 
\]
which is exact since $\Ext^{\geqslant 1}_{\Phi}(s,s) = 0$. This is in fact the minimal projective resolution of $f'_6$ so in particular, $\Hom_{\Phi}(s,f_{11}) = f'_6$, which shows that $\Hom_\Phi(s,\cW) = \cG$. Next we turn to the general case.

\begin{Lemma}
\label{lem:periodic_implies_wide}
Any ${\ell}$-periodic subcategory $\cW \subseteq \cF$ is wide.
\end{Lemma}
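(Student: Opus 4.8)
The plan is to verify that the module $s$ and the algebra $\Gamma=\End_\Phi(s)$ introduced above satisfy conditions (i)--(iv) of Theorem B, and then quote Theorem B. Conditions (i) and (ii) are the cheap ones. Condition (i), $\pd(s_\Phi)<\infty$, is immediate because $\gldim\Phi=d$ by \cite[thm.\ 3]{V}. For condition (ii), $\Ext_\Phi^{\geqslant 1}(s,s)=0$: the groups $\Ext^i_\Phi(s,s)$ vanish for $1\leqslant i\leqslant d-1$ since $s\in\cF$ and $\cF$ is $d$-cluster tilting, and for $i>d$ since $\gldim\Phi=d$; for $i=d$ I would use higher Auslander--Reiten duality, $\Ext^d_\Phi(s_a,s_b)\cong\dual\sHom_\Phi(\tau_d^- f_{\iota(b)},f_{\iota(a)})$ for $1\leqslant a,b\leqslant m'$. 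This is non-zero only when $\iota(b)\leqslant{\ell}-1$, in which case $\tau_d^- f_{\iota(b)}=f_{\iota(b)+m}$ and non-vanishing of the $\sHom$ forces $\iota(a)\geqslant\iota(b)+m$; but $\iota(a)\leqslant\iota(m')=\iota(1)+m-1\leqslant\iota(b)+m-1$, a contradiction. Hence $\Ext^d_\Phi(s,s)=0$.

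The combinatorial heart is condition (iii), and the point is that the $d$-exact sequences $\mathbb{E}(\mu)$ already constructed do the job. The indecomposables of $\cW$ are $f_{\iota(1)},\dots,f_{\iota(m'+{\ell}'-1)}$; those with index $i'\leqslant m'$ lie in $\add(s)$ (take the length-$0$ resolution $0\to f_{\iota(i')}\to f_{\iota(i')}\to 0$), so only $w=f_{\iota(i')}$ with $m'<i'\leqslant m'+{\ell}'-1$ needs attention. For such $w$ I would take $\mu\colon f_{\iota(m')}\overset{\mu}{\to}f_{\iota(i')}$: this is non-zero because over any $\ell'$ consecutive indices the increments of $\iota$ sum to ${\ell}$ and each is at least $1$, so $0<\iota(i')-\iota(m')\leqslant{\ell}-1$; and $\iota(m')\geqslant m$, so $\mu$ is an epimorphism and the term $f_{\iota(m')+{\ell}}=f_{\iota(m'+{\ell}')}$ to its right is zero. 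Then $\mathbb{E}(\mu)$, using $m'-1=(d/2){\ell}'$, is an exact sequence $0\to f_{\iota(1)}\to f_{\iota(i'-m'+1)}\to\cdots\to f_{\iota(i'-{\ell}')}\to f_{\iota(m')}\overset{\mu}{\to}f_{\iota(i')}\to 0$ with exactly $d+2$ non-zero terms, and every index appearing to the left of $f_{\iota(i')}$ lies in $\{1,\dots,m'\}$, so all those terms lie in $\add(s)$. This supplies the resolution required in condition (iii).

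For condition (iv) I would apply $\Hom_\Phi(s,-)$ to the resolutions just built. On $\add(s)$ this functor is the usual equivalence onto $\proj\Gamma$ sending $s_{i'}$ to the indecomposable projective $f'_{i'}$ ($1\leqslant i'\leqslant m'$), compatibly with the isomorphism $\Gamma\cong\Phi_{m',{\ell}'}$ of Lemma \ref{endomorphismAlgebra}; and since $\Ext^{\geqslant 1}_\Phi(s,\add s)=0$ by condition (ii), it preserves exactness of the above resolution, yielding a projective $\Gamma$-resolution $0\to f'_1\to\cdots\to f'_{i'-{\ell}'}\to f'_{m'}\to\Hom_\Phi(s,f_{\iota(i')})\to 0$. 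Thus $\Hom_\Phi(s,f_{\iota(i')})$ is the cokernel of a non-zero morphism $f'_{i'-{\ell}'}\to f'_{m'}$ (non-zero because $\dim_k\Hom_\Phi(s,f_{\iota(i')})={\ell}'-(i'-m')<{\ell}'=\dim_k f'_{m'}$), and since $\Hom_\Gamma(f'_{i'-{\ell}'},f'_{m'})$ is one-dimensional this cokernel is independent of the chosen morphism; running the identical $\mathbb{E}$-construction inside $\mod\Phi_{m',{\ell}'}\cong\mod\Gamma$ with $\mu'\colon f'_{m'}\to f'_{i'}$ identifies it with $f'_{i'}$. Hence $\Hom_\Phi(s,\cW)=\add\!\big(\bigoplus_{i'=1}^{m'+{\ell}'-1}f'_{i'}\big)$, which is the unique $d$-cluster tilting subcategory of $\mod\Gamma$ by \cite[thm.\ 3]{V}, so condition (iv) holds. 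With (i)--(iv) established, Theorem B gives that $\cW$ is wide (and that $-\otimes_\Gamma s$ restricts to an equivalence onto $\cW$).

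I expect the main obstacle to be the index bookkeeping in conditions (iii) and (iv): one must check carefully, using $\iota(i'+{\ell}')=\iota(i')+{\ell}$, $\iota(m')=\iota(1)+m-1$, $m-1=(d/2){\ell}$ and $m'-1=(d/2){\ell}'$, that $\mathbb{E}(\mu)$ has exactly $d+2$ non-zero terms all lying in $\add(s)$, and that the induced projective resolution over $\Gamma$ is the one whose cokernel is $f'_{i'}$. This is routine but it is where all the content sits; the higher Auslander--Reiten duality computation in condition (ii) is a distant second in difficulty, and conditions (i) and (iii) for $w\in\add(s)$ are trivial.
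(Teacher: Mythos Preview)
Your proof is correct and follows essentially the same route as the paper's: you verify conditions (i)--(iv) of Theorem B, using finite global dimension for (i), higher Auslander--Reiten duality for (ii), the $\mathbb{E}(\mu)$-sequences built from $\mu\colon f_{\iota(m')}\to f_{\iota(i')}$ for (iii), and applying $\Hom_\Phi(s,-)$ together with the parallel $\mathbb{E}$-construction in $\mod\Phi_{m',\ell'}$ for (iv). The index bookkeeping you flag is indeed the only delicate part, and your treatment of it matches the paper's.
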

\begin{proof}
We define $s$ as above.
As in the example it is enough to show that Theorem B applies. Let us
check conditions (i)--(iv) in the theorem. 

(i):  $\Phi$ has finite global dimension so $s$ has finite projective
dimension. 

(ii):  Since $\cF$ is $d$-cluster tilting we only need to show that
$\Ext^d_{\Phi}(s,s_{i'}) = 0$ for each $i'$. To do this we apply higher
Auslander--Reiten duality
$\Ext^d_{\Phi}(s,s_{i'}) \cong \dual\!\sHom_{\Phi}(\tau_d^-s_{i'},
s)$. Using $\tau_d^-f_{i} = f_{i+m}$ and
$(m-1)/{\ell} = d/2=(m'-1)/{\ell}'$ we compute that
$\tau_d^-s_{i'} = \tau_d^-f_{\iota(i')} = f_{\iota(i')+m-1+1} =
f_{\iota(i'+m'-1)+1}$. Since $\iota(i'+m'-1)+1 > \iota(m')$ we get
$\Hom_\Phi(f_{\iota(i'+m'-1)+1},s) =0$ and so
$\Ext^d_{\Phi}(s,s_{i'}) \cong \dual\!\sHom_{\Phi}(\tau_d^-s_{i'}, s) = 0$.

(iii):  We only need to construct the exact sequences required in
condition (iii) for each indecomposable $f_{\iota(i')} \in \cW$. For
$1 \leqslant i' \leqslant m'$ we have $s_{i'} = f_{\iota(i')}$ so
there is nothing to do. Let $m'+1\leqslant i' \leqslant
m'+{\ell}'-1$. Since $s_{m'} = f_{\iota(m')} \in \cW_{m,m+{\ell}-1}$,
there is an epimorphism $\mu : s_{m'} \to f_{\iota(i')}$. By our
previous discussion on $d$-extensions, we can extend $\mu$ to get the
desired exact sequence,
\begin{equation}
\label{equ:desired_exact_sequence}
0 \to s_{1} \to s_{i'-m'+1} \to \cdots \to s_{i'-{\ell}'} \to s_{m'}
\overset{\mu}\to f_{\iota(i')} \to 0.
\end{equation}

(iv):  By definition,
\[
  \cG = \add \{ \Hom_{ \Phi }( s,f_{ \iota( i' ) } )
                \mid 1 \leqslant i' \leqslant m'+\ell'-1 \}.
\]
On the other hand, since $\Gamma \cong \Phi_{m'{\ell}'}$ we know that
the unique $d$-cluster tilting subcategory of $\mod( \Phi_{ m'{\ell}'
} )$ is
\[
  \cG' = \add\{ f'_{i'} \mid 1 \leqslant i' \leqslant m'+{\ell}'-1\}
\]
where $f'_{i'} = \Hom_\Phi(s,s_{i'})$ for each
$1 \leqslant i' \leqslant m'$ and
$f'_{i'} = \dual\!\Hom_\Phi(s_{i'-{\ell}'+1},s)$ for each
$m'+1 \leqslant i' \leqslant m'+{\ell}'-1$. We will show $\cG = \cG'$
by showing that $\Hom_\Phi(s,f_{\iota(i')}) \cong f'_{i'}$. This is
true by definition for $1 \leqslant i' \leqslant m'$, so we may assume
$m'+1\leqslant i' \leqslant m'+{\ell}'-1$.

By \cite[prop.\ 2.2]{JK} and the fact $\Ext^d_{\Phi}(s,s_1) = 0$, we
find that applying $\Hom_{\Phi}(s,-)$ to the exact sequence
\eqref{equ:desired_exact_sequence} from part (iii) yields an exact
sequence
\[
0 \to \Hom_{\Phi}(s,s_{1}) \to \Hom_{\Phi}(s,s_{i'-m'+1}) \to \cdots
\to \Hom_{\Phi}(s,s_{m'}) \to \Hom_{\Phi}(s,f_{\iota(i')}) \to 0.
\]
Dropping the last term in this sequence gives
\[
0 \to f'_{1} \to f'_{i'-m'+1} \to \cdots \to f'_{i'-{\ell}'} \to f'_{m'}.
\]
Now our discussion on $d$-exact sequences applies also to $\Gamma \cong \Phi_{m'{\ell}'}$ so we have an exact sequence
\[
0 \to f'_{1} \to f'_{i'-m'+1} \to \cdots \to f'_{i'-{\ell}'} \to f'_{m'} \to f'_{i'} \to 0
\]
whence $\Hom_{\Phi}(s,f_{\iota(i')}) \cong f'_{i'}$. 
\end{proof}

We can now finally prove Theorem C.

{\em Proof }(of Theorem C).
Lemma \ref{necessaryForWide} shows that if $\cW$ is wide, then
$\cW$ is ${\ell}$-periodic. Conversely, if $\cW$ is ${\ell}$-periodic,
then it is wide by Lemma \ref{lem:periodic_implies_wide}.  Finally
each ${\ell}$-periodic subcategory $\cW$ is determined by an arbitrary
choice of ${\ell}' \geqslant 2$ indecomposables from
$\cF_{1{\ell}}$. Evidently there are $2^{\ell}-{\ell}-1$ such choices.
\hfill $\Box$

\medskip
\noindent
{\bf Acknowledgement.}
We thank the referee for their careful reading and useful comments.  This work was supported by EPSRC grant EP/P016014/1 ``Higher Dimensional Homological Algebra''.

\end{document}